\def\Q{\mathbb{Q}}
\def\Z{\mathbb{Z}}
\def\C{\mathbb{C}}
\def\a{\alpha}
\def\G{\Gamma}
\def\<{\langle}
\def\>{\rangle}
\def\ol{\overline}
\def\tr{\mathrm{Tr}}
\newcounter{app}\newcounter{subapp}
\renewcommand{\appendix}[1]{\setcounter{subapp}{0}\addtocounter{app}{1} \section*{Appendix \Alph{app}: #1}}
\newcommand{\subappendix}[1]{\addtocounter{subapp}{1} \subsection*{\Alph{app}.\arabic{subapp}: #1}}
\newtheorem{theorem}{Theorem}
\newtheorem{lemma}[theorem]{Lemma}
\newtheorem{prop}[theorem]{Proposition}
\newtheorem{cor}[theorem]{Corollary}
\newenvironment{pf}{\par\medskip\noindent\textit{Proof.}~}{\hfill $\square$\par\medskip}
\begin{document}
\title[Generalised triangle groups of type (2,3,2)]{Generalised triangle groups of type (2,3,2) with no cyclic essential representations}
\author{James Howie}
\address{ James Howie\\
Department of Mathematics and Maxwell Institute for Mathematical Sciences\\
Heriot--Watt University\\
Edinburgh EH14 4AS }
\email{J.Howie@hw.ac.uk}
\author{Olexandr Konovalov}
\address{Olexandr Konovalov\\
School of Computer Science\\
University of St Andrews\\ 
North Haugh\\ St Andrews KY16 9SX}
\email{obk1@st-andrews.ac.uk}
\date{\today}
\maketitle

\section{Background}

A
{\em generalised triangle group} is a group $G$ given by a presentation
of the form
$$\< x,y|x^p=y^q=W(x,y)^r=1\>,$$
where $p,q,r>1$ are integers and $W$ is a cyclically reduced word in the
free product $$\< x,y|x^p=y^q=1\>\cong\Z_p\ast\Z_q$$ of even syllable length
$2\ell>0$.  We say that $G$ is of type $(p,q,r)$ and has {\em length parameter} $\ell$.

\medskip
A conjecture of Rosenberger \cite{Ros} says that this class of groups satisfies
a Tits alternative: either $G$ is virtually soluble, or $G$ contains
a non-abelian free subgroup.  Known results to date (see for example the summary
in \cite{How}) reduce the conjecture
to the three cases $(p,q,r)\in \{(2,3,2),(2,4,2),(2,5,2)\}$.

An {\em essential representation} from $G$ to a group $H$ is a homomorphism
$\phi:G\to H$ such that $\phi(x)$, $\phi(y)$ and $\phi(W)$ have orders $p,q,r$ respectively.
In this paper we concentrate on the case where $(p,q,r)=(2,3,2)$ and $G$ does not admit
an essential representation onto a cyclic group. For a generalised triangle group of type $(2,3,2)$, an essential representation onto a cyclic group exists
if and only if the exponent sums of $x$ and $y$ in $W$ are respectively odd and divisible by $3$. Hence the absence of such a representation can arise in one of two ways: either
$G$ has even length parameter, or it has odd length parameter and the exponent-sum of $y$ in $W$ is not divisible by $3$.

\medskip When $G$ has even length parameter,
the conjecture was proved 
 modulo six outstanding cases in \cite{arxiv}. Jack Button (private communication) has since been able to use his largeness-testing software \cite{Button} to prove the conjecture
 for four of these six.  In Appendix A below we include logs of GAP and MAPLE sessions which confirm Buttons largeness results for these four groups, as well
 as verifying the Rosenberger conjecture for one of the remaining two groups.  The only
 remaining open case of the six exceptions in \cite{arxiv} is
\begin{equation}\label{evenexception}
\<x,y|x^2=y^3=((xy)^4(xy^2)^3(xy)^2xy^2)^2=1\>.
\end{equation}
 
\medskip   The argument in \cite{arxiv} involved finding an upper bound
for $\ell$ ($\ell\le 40$) in any putative counterexample,
and systematically analysing the finite (but large)
set of words no longer than that bound.

In the current note we concentrate mainly on the case where $(p,q,r)=(2,3,2)$, $\ell$
is odd and the exponent-sum of $y$ in $W$ is not divisible by $3$.
Again we achieve this by finding a bound on $\ell$ (in this case $\ell\le 49$)
and analysing (using parallel computations with GAP \cite{GAP}) the
resulting finite set of words.

We show that, with at most one exception (up to isomorphism), the Rosenberger
Conjecture holds for all such groups.  The  exception is the
group
\begin{equation}\label{oddexception}
\< x,y|x^2=y^3=((xy)^4(xy^2)^3(xy)^2(xy^2)^2)^2=1\>.
\end{equation}
We have not been able to prove the conjecture for this group, but there is
no evidence to suggest that it fails.

Unfortunately, this approach does not apply to the case where $(p,q,r)=(2,3,2)$ and $G$ admits an essential cyclic representation,
in other words
$\ell$ is odd and the exponent-sum of $y$ in $W$ is divisible by $3$: we cannot
find a theoretical upper bound for $\ell$ in that case.

Summarizing the above discussion, we have

\begin{theorem}\label{main}
Let $G$ be a generalised triangle group of type $(2,3,2)$ that does not admit an essential
representation onto a cyclic group.  If $G$ is a counterexample to the Rosenberger conjecture, then it is isomorphic to one of the following:
$$\<x,y|x^2=y^3=((xy)^4(xy^2)^3(xy)^2xy^2)^2=1\>,$$
$$\< x,y|x^2=y^3=((xy)^4(xy^2)^3(xy)^2(xy^2)^2)^2=1\>.$$
\end{theorem}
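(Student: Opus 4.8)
The plan is to treat the two sources of the hypothesis separately, according to the dichotomy already established: either $G$ has even length parameter $\ell$, or $\ell$ is odd and the exponent-sum of $y$ in $W$ is not divisible by $3$. In both regimes the driving mechanism is the same. A generalised triangle group of type $(2,3,2)$ carries a family of essential representations into $PSL_2(\C)$, obtained by sending $x$ and $y$ to elements of orders $2$ and $3$ and solving the single trace equation $\tr\phi(W)=0$ (the condition, on $SL_2(\C)$ lifts, that $\phi(W)$ have order $2$). The solutions are the roots of a \emph{trace polynomial} $\tau(t)$ in the parameter $t=\tr\phi(xy)$, whose degree grows linearly in $\ell$. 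The key input, available from the earlier theory, is that any essential representation with \emph{non-elementary} image forces $G$ to contain a non-abelian free subgroup, by the Tits alternative for linear groups. Hence a putative counterexample to the Rosenberger conjecture must be \emph{degenerate}, in the sense that every essential representation to $PSL_2(\C)$ has elementary (virtually soluble) image.

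For the even case I would simply invoke the analysis of \cite{arxiv}, which reduces the conjecture for even $\ell$ to a list of six exceptional words: for all other even-length words a non-elementary essential representation is produced, so the conjecture holds. Of these six, four are settled by Button's largeness-testing software \cite{Button}, and a fifth by the direct verification recorded in Appendix A; the GAP and MAPLE logs reproduced there confirm these computations. This leaves exactly the single group \eqref{evenexception}, the first group in the statement.

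For the odd case, which is the new content, the first step is to bound $\ell$. I would show that if $\ell$ is sufficiently large then the degeneracy condition above cannot hold: the trace polynomial $\tau(t)$ has too many roots for all of them to yield elementary representations, since the elementary (reducible, finite, or dihedral) loci are cut out by auxiliary polynomials of controlled degree. Tracking these degrees against $\deg\tau$ yields the explicit bound $\ell\le 49$ for any degenerate group. With $\ell$ bounded, only finitely many cyclically reduced words $W$ with $y$-exponent-sum not divisible by $3$ remain, and for each I would, by parallel computation in GAP \cite{GAP}, compute $\tau$ and exhibit a root giving a non-elementary essential representation, hence a free subgroup. Every word is disposed of in this way except \eqref{oddexception}, the second group in the statement. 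Combining the two cases, a counterexample must be isomorphic to one of the two listed groups.

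The main obstacle is twofold. Theoretically, the delicate point is making the bound $\ell\le 49$ rigorous: one must bound uniformly the number of parameter values that can produce an elementary essential representation and argue that this count is eventually outstripped by $\deg\tau$, which requires careful treatment of the reducible and finite-image degeneracies and of the interaction between the order-$2$ and order-$3$ generators. Computationally, the obstacle is the genuinely degenerate words, for which $\tau$ has no root giving a non-elementary representation; here the finitely many elementary essential representations must be examined individually and the Tits alternative re-established by other means (for instance via largeness or explicit low-index subgroups). The two groups \eqref{evenexception} and \eqref{oddexception} are precisely those that resist every such attempt, and so survive as the only possible exceptions.
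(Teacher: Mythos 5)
Your even-length half matches the paper: invoke \cite{arxiv}, Button's largeness results for four of the six exceptions, and the Appendix~A computation (Theorem \ref{G7b}) for a fifth, leaving \eqref{evenexception}. The odd case is where your argument has two genuine gaps. First, your mechanism for the bound $\ell\le 49$ --- that the elementary loci are cut out by auxiliary polynomials of controlled degree, so that $\deg\tau$ eventually ``outstrips'' the count of elementary roots --- cannot work. The elementary values of $\lambda=\tr\phi(xy)$ form a \emph{fixed finite set} ($0,\pm1,\pm\sqrt2,\pm\sqrt3,(\pm1\pm\sqrt5)/2$), but a degenerate trace polynomial can vanish there with arbitrarily high multiplicity: $\lambda^a(\lambda^2-2)^c(\lambda^4-3\lambda^2+1)^e$ has all roots elementary and unbounded degree, so no degree count rules out large $\ell$. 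What the paper actually uses is the analytic bound of Lemma \ref{abs2}: $|\tau_W(z)|\le 2$ whenever $|z|\le\sqrt3$, because $X,Y$ may be chosen in $SU(2)$; evaluating the candidate polynomial form at $\lambda_0=0.1$ and $\lambda_1=1.15$ then forces $c\le 4$ and $e\le 2c+2$ (Theorem \ref{Calculation}), whence $\ell=1+2c+4e\le 49$. That compactness argument is the essential missing ingredient, and nothing in your sketch substitutes for it.

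Second, your disposal of the bounded-length words misdescribes what the computation can deliver. The GAP search cannot ``exhibit a root giving a non-elementary essential representation'' for all but one word: by construction, the words surviving the search are exactly those whose trace polynomial has \emph{all} roots elementary, and there are $31$ of them (Table~1), not one. For these, no choice of root helps, and the paper must eliminate them group-theoretically: sixteen by a small-cancellation criterion (expressing $W_n$ as a product of three non-pieces, \S\ref{sc}); one ($G_{13}$) by a lengthy curvature-and-pictures argument (Theorem \ref{free13}, Appendix~C); four ($G_1,G_2,G_4,G_{11}$) by showing they are finite or virtually abelian --- note that your framing overlooks that ``virtually soluble'' is an allowed outcome, so these are not counterexamples rather than groups with free subgroups; three ($G_3,G_7,G_{29}$) by largeness computations; $G_6$ by mapping onto $G_3$; $G_5$ by an $SL(3,\C)$ representation of its commutator subgroup following L\'{e}vai--Rosenberger--Souvignier; and $G_8,G_{12},G_{16},G_{21}$ by mapping onto $G_5$. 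Your closing paragraph concedes that degenerate words need ``other means,'' but ``largeness or explicit low-index subgroups'' would not suffice for the small-cancellation and pictures cases, and declaring the two listed groups to be ``precisely those that resist every such attempt'' is an empirical observation, not a proof. As written, your argument establishes substantially less than Theorem \ref{main}.
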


In \S \ref{even} below we treat the even length case. We outline the proof that $5$ of the $6$ exceptional cases from \cite[Table 2]{arxiv} contain non-abelian free subgroups.  The full computational details are postponed to Appendix A.  In the remainder of the paper we concentrate on the odd-length case.

In \S \ref{tracepoly}  we recall the definition of the {\em trace polynomial}
of $W$, and use it to obtain strong restrictions on the form of $W$ in any
putative counterexample to the Rosenberger Conjecture (Theorem \ref{Calculation}).
In \S \ref{search} we describe a computer search that uses these restrictions to
refine the set of possible counterexamples (up to a simple equivalence
relation) to a set of size $31$.  In \S \ref{sc} we use small-cancellation theory
to eliminate $16$ of these groups as potential counterexamples. A more complicated application of small-cancellation
techniques is used to eliminate one further group -- stated as Theorem \ref{free13} in \S \ref{asc} and proved in Appendix C.  Finally in \S \ref{ah} we use a variety of
further methods to eliminate $13$ more groups, leaving us with the single exceptional
case shown above. 

 Our work involves a certain amount of computation.
The graphs in the proof of Theorem \ref{Calculation} were produced using MAPLE, which was also used for the Gr\"{o}bner basis calculations in Appendix A.
All other computations were done using GAP \cite{GAP}.  Logs of GAP and MAPLE
sessions
verifying most of our computational results are included in  Appendices A (for the even length case) and B (for the odd length case).  Appendix C contains a detailed proof using pictures
of the existence of free subgroups in one of the groups concerned.

\section{Even length}\label{even}
In this section we outline the investigations into the case where $\ell$ is even.  Detailed calculations are given in Appendix A.  As mentioned in the Introduction, \cite{arxiv} deals with all such groups with $6$ exceptions.
Button has used his largeness test \cite{Button} to deal with $4$  of the $6$ exceptions
and we confirm this by GAP calculations in Appendix A.  We are currently
unable to handle the fifth exceptional case (\ref{evenexception}) above.  However
we are able to confirm that the sixth exception (Group 7b in \cite[Table 2]{arxiv})
has non-abelian free subgroups.

\begin{theorem}\label{G7b}
The group
$$G=\< x,y|x^2=y^3=((xy)^3(xy^2)^2xy(xy^2)^2xyxy^2)^2=1\>$$
contains non-abelian free subgroups.
\end{theorem}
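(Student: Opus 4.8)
The plan is to find an essential representation $\rho\colon G\to \mathrm{PSL}_2(\C)$ whose image is non-elementary, and then to invoke the Tits alternative for $\mathrm{PSL}_2(\C)$, by which every non-elementary subgroup contains a non-abelian free subgroup. Such a free subgroup $F$ lifts back to $G$: choosing preimages of a pair of free generators gives a subgroup of $G$ that surjects onto $F$, and since $F$ is free this surjection splits, so $G$ contains an isomorphic copy of $F$. The whole problem therefore reduces to producing a single non-elementary essential representation of $G$.

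To set this up I would use the standard one-parameter family of representations into $\mathrm{SL}_2(\C)$. Put $\rho(x)=X$ with $\tr X=0$, so that $X^2=-I$ and $X$ projects to an element of order $2$; put $\rho(y)=Y$ with $\tr Y=1$, so that $Y^3=-I$ and $Y$ projects to an element of order $3$. Up to conjugacy the representation is then governed by the single parameter $\l=\tr(XY)$, and substituting into $W$ yields the \emph{trace polynomial} $\tau_W(\l)=\tr(\rho(W))$, a polynomial whose degree equals the length parameter $\ell=10$. For $\rho$ to be essential with $\rho(W)$ of order $2$ we need $\rho(W)$ to have trace $0$, that is, $\l$ must be a root of $\tau_W$. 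Thus the candidate parameters form the finite root set of a degree-$10$ polynomial, which is computed here by GAP (see Appendix A).

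The decisive step is then to exhibit one root $\l_0$ of $\tau_W$ at which the resulting representation $\rho_{\l_0}$ is non-elementary. Non-elementarity can be certified by two trace conditions: irreducibility, via $\tr[\rho(x),\rho(y)]\neq 2$, and infiniteness of the image, checked by confirming that some element -- for instance $\rho(xy)$ -- has infinite order, which amounts to verifying that $\l_0$ is not one of the finitely many values $2\cos(k\pi/n)$ giving an elementary (finite or dihedral) specialisation. The main obstacle is precisely this verification: one must work over the splitting field of $\tau_W$ and confirm that the chosen algebraic root $\l_0$ avoids every reducible, finite, and dihedral locus. Since these exceptional loci are cut out by finitely many further polynomial conditions in $\l$, a suitable root is expected to exist, and the computation in Appendix A is intended to pin it down explicitly. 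Once such $\l_0$ is fixed, $\rho_{\l_0}(G)$ is a non-elementary subgroup of $\mathrm{PSL}_2(\C)$, the Tits alternative supplies a non-abelian free subgroup of the image, and lifting its generators to $G$ completes the argument.
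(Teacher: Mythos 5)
Your strategy---parametrise essential representations $G\to\mathrm{PSL}_2(\C)$ by $\lambda=\mathrm{Tr}(XY)$, find a root $\lambda_0$ of the trace polynomial $\tau_W$ lying off the reducible/finite/dihedral loci, and then apply Tits and lift the free subgroup---is the standard method for generalised triangle groups, but it provably cannot work for this particular group, and that is precisely why this group required a separate theorem. This $W$ is the word $W_{7b}$, one of the six exceptional cases of \cite{arxiv}, and those cases are exceptional exactly because \emph{every} root of $\tau_W$ lies in the finite set of elementary values $\{0,\pm1,\pm\sqrt{2},\pm\sqrt{3},\frac{\pm1\pm\sqrt5}{2}\}$ corresponding to finite, dihedral or metabelian-parabolic images. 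Since any essential representation of $G$ has $\mathrm{Tr}(X)=0$, $\mathrm{Tr}(Y)=\pm1$ and is (if irreducible) conjugate to a member of your one-parameter family at a root of $\tau_W$, it follows that every essential representation of $G$ into $\mathrm{PSL}_2(\C)$ is elementary. So your ``decisive step''---the sentence ``a suitable root is expected to exist, and the computation in Appendix A is intended to pin it down explicitly''---is exactly where the argument collapses: no such root exists, and no computation can produce one. (Had one existed, this group would already have been disposed of in \cite{arxiv}.)

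The paper's actual proof escapes this obstruction by abandoning representations of $G$ itself and working instead with the commutator subgroup $[G,G]$, which has index $6$ and a two-generator presentation with six relators $W_1^2,\dots,W_6^2$. A Gr\"obner basis computation produces a representation $\sigma\colon[G,G]\to\mathrm{PSL}_2(K)$ over a degree-$48$ number field $K=\Q[z]$, sending the generators to explicit matrices $M,N$ whose traces $a,b$ are roots of an irreducible polynomial of degree $12$. Irreducibility of $\sigma$ is certified by the trace condition $a^2+b^2+c^2+abc\neq4$ (with $c=\mathrm{Tr}(MN)$), and the degree-$12$ irreducibility forces $a,b,c$ to differ from the traces of all low-order elements of $\mathrm{SL}_2(\C)$, ruling out finite and dihedral images; hence $\sigma([G,G])$ is non-elementary and contains a non-abelian free subgroup, and therefore so does $G$. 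The passage to a finite-index subgroup is not a convenience but the essential new idea: a representation of $[G,G]$ need not extend to $G$, so it is not constrained by the vanishing of $\tau_W$ only at elementary values, which is what defeats the approach you proposed.
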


\begin{proof}
The commutator subgroup $[G,G]$ of $G$ has index $6$ and a presentation
with two generators $u,v$ and six relators:
$$W_1^2=W_1(u,v)^2=(v^2uv^{-1}u^2)^2,$$
$$W_2^2=W_2(u,v)^2=(v^2u^2v^{-1}u)^2,$$
$$W_3^2=W_3(u,v)^2=(v^2u^{-3}vu^{-1})^2,$$
$$W_4^2=W_4(u,v)^2=(v^2u^{-1}vu^{-3})^2,$$
$$W_5^2=W_5(u,v)^2=(v^2u^{-1}vu^{-1}vu)^2,$$
$$W_6^2=W_6(u,v)^2=(v^2uvu^{-1}vu^{-1})^2.$$
A Gr\"{o}bner basis calculation shows that $[G,G]$ has a representation
$\sigma:[G,G]\to PSL(2,\C)$, $u\mapsto M$, $u\mapsto N$, where
$$M=\left(\begin{array}{cc} 0 & -1\\ 1 & a\end{array}\right),~~~
N=\left(\begin{array}{cc} -z & 1 + bz + z^2\\ -1 & z+b\end{array}\right),$$
$z$ is an algebraic integer of degree $48$ and $a=Trace(M)$ and
$b=Trace(N)$ are roots in $\Q[z]$ of the irreducible
polynomial $$p(X):=X^{12}-3X^{10}-9X^8+42X^6-48X^4+15X^2+1.$$

Now a representation $\rho:\<u,v\>\to PSL(2,\C)$ is reducible only if
$$Trace(\rho(u))^2+Trace(\rho(v))^2+Trace(\rho(uv))^2 + Trace(\rho(u))Trace\rho(v))Trace(\rho(uv))=4.$$
A further Gr\"{o}bner basis calculation shows that
$$a^2+b^2+c^2+abc\neq 4~~\mathrm{in}~~\Q[z],$$ where $c=Trace(MN)$.
Hence the above representation $\sigma$ is irreducible.  Moreover,
since $p$ is irreducible of degree $12$, none of $a,b,c$ can be the
trace of a matrix of order less than $25$ in $SL(2,\C)$, so the image of $\sigma$ in $PSL(2,\C)$ cannot be a finite or dihedral subgroup.  In particular $\sigma([G,G])$, and hence $G$, contains a non-abelian free subgroup.

Logs of sessions in GAP and MAPLE carrying out the calculations
listed above are given in Appendix A below.
\end{proof}

\section{Trace polynomial}\label{tracepoly}

The standard tool for investigating generalised triangle groups is the trace polynomial, defined as follows.
Construct matrices $X,Y\in \mathrm{SL}_2(\C[\lambda])$ such that the traces of $X,Y,XY$ are
respectively equal to $2\cos(\pi/p)$, $2\cos(\pi/q)$ and $\lambda$.  The trace polynomial
$\tau_W(\lambda)$ is defined to be the trace of $W(X,Y)$.
It is not difficult to verify that such a choice of matrices is always
possible, and that $\tau_W$ does not depend on the choice.
Moreover, $\tau_W(\lambda)$ is a polynomial of degree $k$ in
$\lambda$, and the combinatorics of the word
$W$ determine the coefficients of $\tau_W$ in a well-understood way.  Details
can be found, for example, in \cite{HW}.

If $\alpha\in\C$ is a root of $\tau_W(\lambda)-2\cos(\pi/r)$, then we can obtain an
{\em essential representation} of $G$ (one which maps $x,y,w$ to elements of orders $p,q,r$
respectively) into $\mathrm{PSL}_2(\C)$ by putting $\lambda=\alpha$ in $X,Y$.
Provided the image of this representation is non-elementary, it will contain a non-abelian free
subgroup by Tits' results, and hence so will $G$.  Thus one is reduced to consideration
of words $W$ such that the roots of $\tau_W(\lambda)-2\cos(\pi/r)$ all lie within a certain small
finite set.

In the case of interest here, $(p,q,r)=(2,3,2)$, $X,Y$ have traces $0,1$ respectively,
and we are interested in the roots of $\tau_W(\lambda)$ itself.  It can moreover be readily
shown that in this case $\tau_W(\lambda)$ is a monic polynomial with integer coefficients,
and is odd or even depending on the parity of the length parameter $\ell$.

In our case, $\ell$ is odd, so $\tau_W$
is odd.
Moreover, the roots of $\tau_W$ that correspond to
elementary representations are $0$ (corresponding to a representation onto $S_3$),
$\pm 1$ ($A_4$), $\pm \sqrt{2}$
($S_4$), $\pm \sqrt{3}$
($\Z_6$ or a parabolic subgroup of the form
$\Z^2\rtimes\Z_6$)
and $\frac{\pm 1\pm\sqrt{5}}2$ ($A_5$).

Since $\tau_W$ is odd, the negative of any root is also a root of the same multiplicity.
Since $\tau_W$ has integer coefficients, $\frac{1+\sqrt{5}}2$ is a root if and only
if $\frac{1-\sqrt{5}}2$ is a root, in which case they have the same multiplicity.  It follows
therefore that $G$ admits a non-abelian free subgroup except possibly in the cases where
\begin{equation}\label{tp}
\tau_W(\lambda)=\lambda^a(\lambda^2-1)^b(\lambda^2-2)^c(\lambda^2-3)^d(\lambda^4-3\lambda^2+1)^e
\end{equation}
for some non-negative integers $a,b,c,d,e$ with $a$ odd.

\medskip
We next note one further property of the trace polynomial.

\begin{lemma}\label{abs2}
For any real number $z$ with $|z|\le\sqrt{3}$, $|\tau_W(z)|\le 2$.
\end{lemma}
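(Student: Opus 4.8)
The plan is to exploit the fact that, for $|z|\le\sqrt3$, the three prescribed traces $0$, $1$ and $z$ can all be realised by matrices lying in the compact group $\mathrm{SU}(2)$, every element of which has real trace in $[-2,2]$. Since $\tau_W(\lambda)$ depends only on the traces of $X$, $Y$ and $XY$ and not on the particular matrices chosen to realise them, evaluating $\tau_W$ at such a $z$ then computes the trace of a single element of $\mathrm{SU}(2)$, forcing $|\tau_W(z)|\le2$. So the whole argument reduces to (i) showing that the required traces are attainable inside $\mathrm{SU}(2)$ exactly when $|z|\le\sqrt3$, and (ii) the elementary fact that unitary matrices have trace bounded by $2$ in absolute value.

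First I would use the double cover $\mathrm{SU}(2)\to\mathrm{SO}(3)$: an element of $\mathrm{SU}(2)$ rotating through angle $2\theta$ about an axis $\mathbf n\in\mathbb{R}^3$ has trace $2\cos\theta$. Thus a trace-$0$ matrix $X$ corresponds to $\theta=\pi/2$ and a trace-$1$ matrix $Y$ to $\theta=\pi/3$, matching $2\cos(\pi/2)=0$ and $2\cos(\pi/3)=1$. Writing $X,Y$ as unit quaternions with axes $\mathbf u,\mathbf v$ and applying the standard product formula, the trace of $XY$ comes out as $2\bigl(\cos\tfrac\pi2\cos\tfrac\pi3-\sin\tfrac\pi2\sin\tfrac\pi3\,(\mathbf u\cdot\mathbf v)\bigr)=-\sqrt3\,(\mathbf u\cdot\mathbf v)$. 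As the angle between the two axes varies, $\mathbf u\cdot\mathbf v$ ranges over the whole interval $[-1,1]$, so $\mathrm{Tr}(XY)$ ranges over the full interval $[-\sqrt3,\sqrt3]$. Hence for any real $z$ with $|z|\le\sqrt3$ there is a genuine choice of $X,Y\in\mathrm{SU}(2)$ with $\mathrm{Tr}(X)=0$, $\mathrm{Tr}(Y)=1$ and $\mathrm{Tr}(XY)=z$.

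With such $X,Y$ fixed, the matrix $W(X,Y)$ again lies in $\mathrm{SU}(2)$, since $\mathrm{SU}(2)$ is closed under multiplication and inversion. Every element of $\mathrm{SU}(2)$ has eigenvalues $e^{\pm i\varphi}$ and therefore real trace $2\cos\varphi\in[-2,2]$. By the invariance of the trace polynomial under the choice of representing matrices (recalled in the discussion preceding the lemma), $\tau_W(z)=\mathrm{Tr}(W(X,Y))$, and the right-hand side lies in $[-2,2]$, which is exactly the assertion.

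The only point genuinely requiring care is the range computation in the second paragraph: one must verify that the set of attainable values of $\mathrm{Tr}(XY)$ inside $\mathrm{SU}(2)$, for the fixed traces $\mathrm{Tr}(X)=0$ and $\mathrm{Tr}(Y)=1$, is precisely $[-\sqrt3,\sqrt3]$. This is where the specific values $p=2$, $q=3$ enter, and it explains why the threshold is $\sqrt3=2\cos(\pi/6)$ rather than some other constant. Once this is established, the remainder is immediate, as it only invokes the bound $|\mathrm{Tr}|\le2$ for unitary matrices together with the definition of $\tau_W$.
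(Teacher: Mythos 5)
Your proof is correct and follows essentially the same route as the paper: realise the traces $0$, $1$, $z$ by matrices in $\mathrm{SU}(2)$ and observe that $\tau_W(z)=\mathrm{Tr}(W(X,Y))\in[-2,2]$ since $W(X,Y)\in\mathrm{SU}(2)$. The only difference is that the paper simply asserts the realisability of $\mathrm{Tr}(XY)=z$ for $|z|\le\sqrt{3}$, whereas you verify it explicitly via the quaternion product formula, which is a welcome addition rather than a divergence.
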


\begin{pf}
The bounds on $z$ mean that it is possible to find matrices $X,Y\in SU(2)$ with
$\tr(X)=0$, $\tr(Y)=1$ and $\tr(XY)=z$.  Hence $\tau_W(z)=\tr(W(X,Y))\in [-2,2]$
since $W(X,Y)\in SU(2)$.
\end{pf}

\medskip
Recall our assumption that the exponent-sum of $y$ in $W$
is not divisible by $3$. It follows that there can be no essential representations from $G$ onto $\Z_6$ or
onto $A_4$, and hence $b=d=0$. 
Thus (\ref{tp}) becomes in this case
$$\tau_W(\lambda)=\lambda^a(\lambda^2-2)^c(\lambda^4-3\lambda^2+1)^e.$$

In particular, $\tau_W(\sqrt{3})=3^{a/2}$, so we also have $a\le 1$ by Lemma \ref{abs2}.
Since $a$ is odd this actually means $a=1$.  So the precise form of the trace polynomial
in this case depends only on the two integers $c,e$. We find
bounds on $c,e$ as follows.

\begin{theorem}\label{Calculation}
Let $G$ be a generalised triangle group of type $(2,3,2)$ with odd length parameter
$\ell$ such that the exponent-sum of $y$ in $W(x,y)$ is not divisible by $3$.
Suppose that $G$ does not contain a non-abelian free subgroup.
Then
$$\tau_W(\lambda)=\lambda(\lambda^2-2)^c(\lambda^4-3\lambda^2+1)^e$$
with
$$c\le 4~~\mathrm{and}~~e\le 2c+2.$$
In particular $W$ has length parameter
$$\ell=1+2c+4e\le 49.$$
\end{theorem}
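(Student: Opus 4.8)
The plan is to derive bounds on $c$ and $e$ by exploiting the already-established constraint from Lemma~\ref{abs2}: for any real $z$ with $|z|\le\sqrt{3}$ we have $|\tau_W(z)|\le 2$. We know that $\tau_W(\lambda)=\lambda(\lambda^2-2)^c(\lambda^4-3\lambda^2+1)^e$, so the idea is to evaluate this explicit product at cleverly chosen points in the interval $[-\sqrt{3},\sqrt{3}]$ where one or more of the factors is large in absolute value, and to ask for which $(c,e)$ the product can still stay within $[-2,2]$.

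First I would locate a point $z_0\in(0,\sqrt{3})$ at which the factor $\lambda^2-2$ attains a useful magnitude while the other factors are not too small — for instance a point where $|z_0(z_0^2-2)|$ is bounded away from $0$ and $|z_0^4-3z_0^2+1|\ge 1$ — so that $|\tau_W(z_0)|$ grows at least geometrically in $c$ (and does not decay in $e$); imposing $|\tau_W(z_0)|\le 2$ then forces an absolute upper bound on $c$, which I expect to come out as $c\le 4$. The author's remark that the graphs in the proof were produced in MAPLE strongly suggests that the right points are read off from plots of the relevant factors on $[0,\sqrt{3}]$, so I would use numerical/graphical evidence to pin down the evaluation points and the resulting inequalities rather than trying to optimize them by hand.

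For the bound $e\le 2c+2$, the plan is to find a second evaluation point $z_1\in[-\sqrt{3},\sqrt{3}]$ where the quartic factor $\lambda^4-3\lambda^2+1$ is large in absolute value; its roots are exactly $\pm\frac{\pm1\pm\sqrt5}{2}$, all of which lie inside $[-\sqrt3,\sqrt3]$, so the quartic is sizeable on much of the interval and in particular near $\lambda=\pm\sqrt3$ and near $\lambda=0$. At such a $z_1$ the inequality $|z_1|\,|z_1^2-2|^c\,|z_1^4-3z_1^2+1|^e\le 2$ converts, after taking logarithms, into a linear inequality of the shape $e\log|z_1^4-3z_1^2+1| \le \log 2 - \log|z_1| - c\log|z_1^2-2|$, which rearranges to an upper bound for $e$ that is linear in $c$; choosing $z_1$ so that the slope and intercept match gives precisely $e\le 2c+2$. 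Combining $c\le 4$ with $e\le 2c+2$ then yields $\ell=1+2c+4e\le 1+2\cdot4+4(2\cdot4+2)=49$, the stated conclusion.

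The main obstacle will be selecting the evaluation points $z_0,z_1$ so that the two competing factors are controlled simultaneously: at a point where $(\lambda^2-2)$ is large the quartic may be small (or vice versa), and a careless choice will either give no bound or a weaker one than required. The delicate part is that the bound $e\le 2c+2$ is tight enough to need a point where the trade-off between the $c$-dependence and the $e$-dependence is exactly balanced, so I would expect to rely on the MAPLE plots to identify where $|\tau_W|$ first threatens to exceed $2$ as $e$ grows relative to $c$, and then to verify the chosen inequalities rigorously by exact evaluation of the polynomial factors at those points.
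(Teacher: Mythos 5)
Your overall skeleton --- two evaluation points inside $[-\sqrt3,\sqrt3]$ combined with Lemma~\ref{abs2} --- is the paper's strategy, and your second step is essentially the paper's argument for $e\le 2c+2$: the paper evaluates at $\lambda_1=1.15\in(1,\sqrt2)$, where $|f(\lambda_1)|\approx 1.22>1$ for $f(\lambda)=\lambda^4-3\lambda^2+1$, and its multiplicative bookkeeping (namely $|\lambda_1 f(\lambda_1)^3|\approx 2.08>2$ and $|(\lambda_1^2-2)f(\lambda_1)^2|\approx 1.01>1$) encodes exactly your log-linear inequality with slope at most $2$ and intercept less than $3$. The genuine gap is in your first step: the point $z_0$ you ask for does not exist. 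On $[0,\sqrt3]$ one has $|z^2-2|>1$ only for $|z|<1$, and on $(0,1)$ the quartic $f$ is strictly decreasing from $f(0)=1$ to $f(1)=-1$ (since $f'(z)=2z(2z^2-3)<0$ there), so $|f(z)|<1$ strictly on $(0,1)$, while at $z=0$ the factor $\lambda$ kills $\tau_W$. Hence no point gives geometric growth in $c$ together with ``no decay in $e$''. Worse, no single evaluation point can give an absolute bound on $c$ at all: at any $z_0\in(0,1)$, fixing $c$ and letting $e\to\infty$ sends $|\tau_W(z_0)|\to 0$, so the constraint $|\tau_W(z_0)|\le 2$ is satisfied however large $c$ is; and at points with $|z_0|\ge 1$ the factor $|z_0^2-2|^c$ does not grow. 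The bound on $c$ is necessarily conditional on the bound on $e$, so your two steps must be carried out in the opposite order.

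This reversal, together with one extra device, is what the paper does. Having first proved $e\le 2c+2$ at $\lambda_1$, it evaluates at $\lambda_0=0.1$, where $|f(\lambda_0)|\approx 0.97<1$ but $g(\lambda):=(\lambda^2-2)f(\lambda)^2$ satisfies $|g(\lambda_0)|\approx 1.87>1$. Under the constraint $e\le 2c+2$ the decay factor obeys $|f(\lambda_0)|^{e}\ge |f(\lambda_0)|^{2c+2}$, so
$|\tau_W(\lambda_0)|\ge \lambda_0\,|f(\lambda_0)|^2\,|g(\lambda_0)|^{c}$,
which still grows geometrically in $c$ and exceeds $2$ once $c\ge 5$; this gives $c\le 4$ and hence $\ell=1+2c+4e\le 49$. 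You correctly flagged the trade-off between the two factors as the ``main obstacle,'' but deferring it to MAPLE plots does not resolve it: the resolution is not a cleverer choice of point but the reordering of the two bounds, plus the trick of grouping one copy of $(\lambda^2-2)$ with two copies of $f$ so that the grouped factor has modulus greater than $1$ at $\lambda_0$.
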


\begin{pf}
The form of $\tau_W(\lambda)$ is as stated in the theorem, from the  preceding remarks.
We must check the inequalities relating $c,e$.

Let $f(\lambda):=(\lambda^4-3\lambda^2+1)$ and $g(\lambda):=(\lambda^2-2)f(\lambda)^2$.  Let $\lambda_0=0.1$ and $\lambda_1:=1.15$.  Then calculations show that
$$|f(\lambda_1)|~\sim~1.22>1>0.97~\sim~|f(\lambda_0)|,~~
|g(\lambda_0)|~\sim~1.87>1~~\mathrm{and}~~|g(\lambda_1)|~\sim~1.01>1.$$

\medskip
\begin{center}
\begin{figure}
\includegraphics[height=55mm]{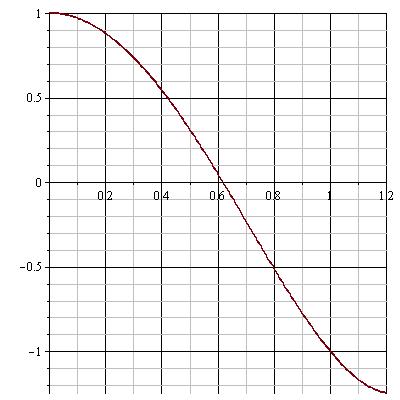}
\hspace{10mm}
\includegraphics[height=55mm]{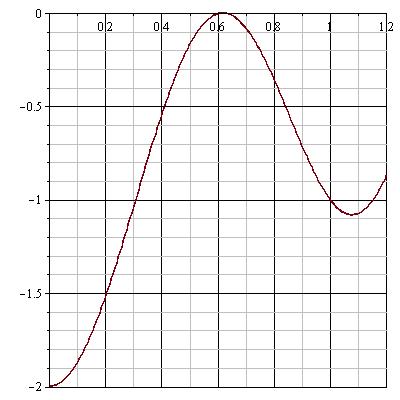}\\
\hspace*{10mm}
$y=f(x)=(x^4-3x+1)$
\hspace{15mm}
$y=g(x)=(x^2-2)(x^4-3x+1)^2$
\caption{Graphs of $f$ and $g$}
\label{maplegraphs}
\end{figure}
\end{center}

\medskip
Now put $$\sigma_0:=\lambda_0(\lambda_0^2-2)^5(\lambda_0^4-3\lambda_0^2+1)^{12}$$  and 
$$\sigma_1:=\lambda_1(\lambda_1^4-3\lambda_1^2+1)^3.$$
Then further
calculations show that $|\sigma_0|~\sim~2.17>2$ and $|\sigma_1|~\sim~2.08>2$.

\medskip
\begin{center}
\begin{figure}
\includegraphics[width=80mm,height=55mm]{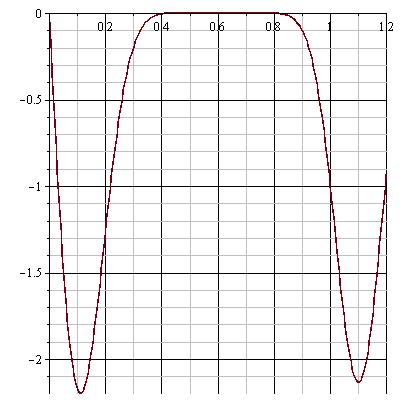}\\
\caption{Graph of $\sigma_0(x)=x(x^2-2)^5(x^4-3x+1)^{12}$}
\label{maple2}
\end{figure}
\end{center}

\medskip
Hence if $\tau_W$ has the above form with $e\ge 2c+3$, we have
$$|\tau_W(\lambda_1)|=|\sigma_1|\times|g(\lambda_1)|^c\times|f(\lambda_1)|^{e-2c-3}>2,$$
a contradiction to Lemma \ref{abs2}, while if $c\ge 5$ and $e\le 2c+2$ then
 $$|\tau_W(\lambda_0)|=\frac{|\sigma_0|\times|g(\lambda_0)|^{c-5}}{|f(\lambda_0)|^{2c+2-e}}>2,$$
also a contradiction to Lemma \ref{abs2}.
\end{pf}

\section{Searching}\label{search}

The bounds obtained on $c$ and $e$ (and hence also $\ell$)
in Theorem \ref{Calculation} show that only finitely many words have
trace polynomial of the appropriate form.  In principle it is possible to
determine all such words by a computer search.  In order to make such a search
more practical, we first prove a result that restricts the form of words that
can arise.  We then describe a search algorithm that will identify all
words with the correct form of trace polynomial, and finally give the
results of this search.

\subsection{Restricting the form of $W$} 

 Up to cyclic permutation, we may write a word $W$ in the form
$(xy)^\ell$, $(xy^2)^\ell$ or
$$W=(xy)^{b(1)}(xy^2)^{b(2)}\cdots (xy)^{b(t-1)}(xy^2)^{b(t)}$$
with $b(j)\ge 1$ for $j=1,\dots,t$.  Each maximal subword of the form $(xy)^{b(j)}$
or $(xy^2)^{b(j)}$ will be called a {\em block}, and the index $b(j)$ will be called
the {\em length} of the block.  Note that the number of blocks is either $1$
or an even number.

\begin{lemma}\label{blocks}
If $W$ has the form $(xy)^\ell$ or $(xy^2)^\ell$ and $\tau_W(\lambda)$
has the form $$\lambda(\lambda^2-2)^c(\lambda^4-3\lambda^2+1)^e$$ then $\ell=1$
and $c=e=0$.  If $\tau_W(\lambda)$
has the form $$\lambda(\lambda^2-2)^c(\lambda^4-3\lambda^2+1)^e$$ with $c+e>0$ then
$W$ has $2e+2$ blocks, of which $c+e$ have length greater than $1$.  In particular
$e\ge c-2$.
\end{lemma}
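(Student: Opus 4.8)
The first assertion is the cleanest, so I would dispose of it first. If $W=(xy)^\ell$, then writing $A=XY$ we have $\tau_W=\tr(A^\ell)=\Phi_\ell(\lambda)$, where $\Phi_\ell$ is the degree-$\ell$ polynomial determined by $\Phi_\ell(2\cos\theta)=2\cos(\ell\theta)$; the case $W=(xy^2)^\ell$ is identical, since $\tr(XY^2)=\tr(XY)-\tr(X)=\lambda$. The roots of $\Phi_\ell$ are the $\ell$ distinct numbers $2\cos\bigl((2k+1)\pi/2\ell\bigr)$ for $0\le k<\ell$, so $\Phi_\ell$ is squarefree, and matching it against $\lambda(\lambda^2-2)^c(\lambda^4-3\lambda^2+1)^e$ forces $c,e\le 1$. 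If $c=1$ then $\sqrt2=2\cos(\pi/4)$ is a root, forcing $\ell=2(2k+1)$ to be even; if $e=1$ then $2\cos(\pi/5)$ is a root, forcing $5(2k+1)=2\ell$, whose left side is odd. The first contradicts the hypothesis that $\ell$ is odd and the second is impossible outright, so $c=e=0$; hence $\Phi_\ell=\lambda$ and $\ell=1$.

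For the second assertion I would set up the block calculus that underlies the combinatorial description of $\tau_W$ in \cite{HW}. Write $W=C_1^{b(1)}\cdots C_t^{b(t)}$ with the $C_j\in\{A,B\}$ alternating, where $A=XY$ and $B=XY^2$; here $t$ is even, and $t\ge 2$ because the single-block case was excluded by the first assertion together with $c+e>0$. Both $A$ and $B$ have trace $\lambda$, while $A^{-1}B=Y$ and $\tr(AB)=\lambda^2-1$. Using the identity $M^b=\Psi_{b-1}(\lambda)M-\Psi_{b-2}(\lambda)I$ for $M\in\{A,B\}$, where $\Psi_k$ is the monic degree-$k$ polynomial with $\Psi_k(2\cos\theta)=\sin((k+1)\theta)/\sin\theta$, one expands $\tau_W$ as a sum over subsets $S\subseteq\{1,\dots,t\}$, the term indexed by $S$ having top degree $\ell-2t+2|S|$. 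The unique contribution of top degree $\ell$ comes from $S=\{1,\dots,t\}$ and equals $\bigl(\prod_j\Psi_{b(j)-1}(\lambda)\bigr)\,\Phi_{t/2}(\lambda^2-1)$, which reconfirms that $\tau_W$ is monic of degree $\ell=1+2c+4e=\sum_j b(j)$.

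It then remains to extract the two block counts from the multiplicities of the special roots, and this is the heart of the matter. The root $2\cos(\pi/5)$ of $\tau_W$ has multiplicity exactly $e$ and corresponds to the $A_5$-representation obtained by specialising $\lambda$; I would compute the order of vanishing of $\tau_W$ there from the block expansion above and show it equals $(t/2)-1$, one less than the number of $A$-blocks, which yields $t=2e+2$. Combining the analogous order of vanishing at $\sqrt2=2\cos(\pi/4)$ (the $S_4$-point, of multiplicity $c$) with this count should identify the number of blocks of length greater than $1$ as $c+e$. Granting these two identities, the number of blocks of length $1$ is $t-(c+e)=(2e+2)-(c+e)=e-c+2$, and non-negativity gives the stated inequality $e\ge c-2$.

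The main obstacle is precisely the multiplicity computation of the previous paragraph: translating ``order of vanishing of $\tau_W$ at the $A_5$- and $S_4$-points'' into the combinatorial block counts. This amounts to differentiating the trace function along the one-parameter family $X=X(\lambda)$, $Y=Y(\lambda)$ and controlling the resulting sums over the letters of $W$ at these finite-group specialisations, equivalently invoking the explicit coefficient formulae of \cite{HW}. Everything else, namely the degree bookkeeping of the expansion and the final arithmetic giving $e\ge c-2$, is routine once the two multiplicity identities are established.
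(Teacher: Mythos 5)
Your first paragraph is correct and is essentially the paper's own argument for the single-block case: the paper observes that $2\cos(\pi/2\ell)$ is a root of $\tau_{(xy)^\ell}$, which is incompatible with the prescribed root set once $\ell\ge 3$, and oddness of the degree then forces $\ell=1$; your squarefreeness-plus-parity variant of this is fine.

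The rest of the proposal has a genuine gap, and it sits exactly where you flag it. The two ``multiplicity identities'' --- that the order of vanishing of $\tau_W$ at $2\cos(\pi/5)$ equals $(t/2)-1$, and that the vanishing at $\sqrt2$ identifies the number of long blocks as $c+e$ --- \emph{are} the content of the lemma, and your plan for proving them cannot work as stated. The order of vanishing of $\tau_W$ at the $A_5$-point is not a function of the number of blocks: a generic word with $t$ blocks has no root there at all, so no computation ``from the block expansion'' can return the value $(t/2)-1$; any correct argument must use the global hypothesis on the form of $\tau_W$, and you never say how that hypothesis would enter your proposed differentiation of the trace along the family. The paper's mechanism is entirely different and avoids root multiplicities: it compares the two sub-leading coefficients of $\tau_W$. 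Writing $W=xy^{\alpha(1)}\cdots xy^{\alpha(\ell)}$ and $\beta(j)=-\exp(i\pi(\alpha(j+1)-\alpha(j))/3)$, the formulae of \cite{HW} give the coefficient of $\lambda^{\ell-2}$ as $B_1=\sum_j\beta(j)$ and the coefficient of $\lambda^{\ell-4}$ as the sum $B_2$ of $\beta(j)\beta(k)$ over cyclically non-adjacent pairs, linked by the identity $B_1^2=2B_2+\sum_j\beta(j)^2+2\sum_j\beta(j)\beta(j+1)$. Equating these with $B_1=-2c-3e$ and $B_2=2c(c-1)+9e(e-1)/2+6ce+e$, read off from the assumed product form, shows that exactly $e+1$ indices carry $\beta(j)=-e^{i\pi/3}$ and $e+1$ carry $\beta(j)=-e^{-i\pi/3}$ (hence $W$ has $2e+2$ blocks), and that $\sum_j\beta(j)\beta(j+1)=c+3e+1$, which forces exactly $c+e$ blocks of length greater than $1$, since each such block contributes $e^{i\pi/3}+e^{-i\pi/3}=1$ to this sum in place of $2$. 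The inequality $e\ge c-2$ then follows by your final counting step. If you want to complete your write-up, replace the multiplicity argument by this coefficient comparison; it is finite, combinatorial, and uses the hypothesis on $\tau_W$ exactly where it is needed.
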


\begin{pf}
If $W=(xy)^\ell$ then $2\cos(\pi/2\ell)$ is a root of $\tau_W$, so $\tau_W$
can have the given form only for $\ell\le 2$.  But also $\ell=\mathrm{degree}(\tau_W)$ is odd, so the only possibility is that $\ell=1$ and $c=e=0$ as claimed.

\medskip
Now suppose that $c+e>0$.  We analyse the coefficients
of $\lambda^{\ell-2}$ and $\lambda^{\ell-4}$ in $\tau_W(\lambda)$, using the formulae in
\cite{HW}.  Write $$W=xy^{\alpha(1)}\cdots xy^{\alpha(\ell)}$$
with $\alpha(j)\in\{1,2\}$ for each $j$.  Define $\beta(j)=-exp(i\pi (\alpha(j+1)-\alpha(j))/3)$
(where $j$ is interpreted modulo $\ell$). Then the coefficient $B_1$ of $\lambda^{\ell-2}$
is the sum of all the $\beta(j)$: $B_1=\sum_j\beta(j)$, while the coefficient $B_2$ of
$\lambda^{\ell-4}$ is the sum of products $\beta(j)\beta(k)$, taken over all unordered pairs
$\{j,k\}$ with $k\notin\{j-1,j,j+1\} \mod \ell$.

As noted in \cite{HW}, these coefficients satisfy the following equation:
$$B_1^2=2B_2+\sum_j\beta(j)^2 +2\sum_j\beta(j)\beta(j+1).$$

We can also compute the coefficients directly from the formula
$$\tau_W(\lambda)=\lambda(\lambda^2-2)^c(\lambda^4-3\lambda^2+1)^e,$$
giving
$$B_1=-2c-3e;\qquad\qquad B_2= 4\frac{c(c-1)}2 + 9\frac{e(e-1)}2 + 6ce + e.$$

Putting these equations together gives

$$\sum_j\beta(j)^2 +2\sum_j\beta(j)\beta(j+1) = 4c + 7e.$$

Now the equation $B_1=-2c-3e$ means that, of the $\ell=1+2c+4e$ values of $j$, precisely
$2c+2e-1$ give $\beta(j)=-1$, while $e+1$ give $\beta(j)=-exp(i\pi/3)$ (corresponding to points
in $W$ where $xy$ is followed by $xy^2$) and $e+1$ give $\beta(j)=-exp(-i\pi/3)$
(places where $xy^2$ is followed by $xy$).  It then follows that
$W$ can (up to cyclic conjugacy) be subdivided into $2e+2$ blocks, which are alternatingly powers
of $xy$ and of $xy^2$.

\medskip
It also follows that
$$\sum_j\beta(j)^2= (2c+2e-1) - (e+1) = 2c + e - 2,$$
and hence
$$\sum_j\beta(j)\beta(j+1) = c + 3e + 1.$$

Now consider the above subdivision of $W$ into blocks.  If a block is a power $(xy)^s$ or
$(xy^2)^s$ with $s>1$, then the start of the block reads $xy^2xyxy$ or $xyxy^2xy^2$, giving
a value of $\beta(j)\beta(j+1)$ of $exp(-i\pi/3)$ or $exp(i\pi/3)$ respectively,
while the end of the block reads $xyxyxy^2$ or $xy^2xy^2xy$, giving
$\beta(j)\beta(j+1)=exp(i\pi/3)$ or $exp(-i\pi/3)$ respectively.  Thus each block
$(xy)^s$ or $(xy^2)^s$ with $s>1$ gives rise to one value $\beta(j)\beta(j+1)=exp(i\pi/3)$
and one value $\beta(j)\beta(j+1)=exp(-i\pi/3)$ -- totalling $1$.  All other values
of $\beta(j)\beta(j+1)$ are precisely $1$, so $k-\sum_j\beta(j)\beta(j+1)=c+e$
is equal to the number of blocks $(xy)^s$ or $(xy^2)^s$ for which $s>1$.

\medskip
Finally, since the total number of blocks is $2e+2$, it follows that $c+e\le 2e+2$,
or $c-2\le e$, as required.

\end{pf}

\subsection{The search algorithm}

Suppose given non-negative integers $c,e$ with $c+e>0$.  We wish to find all words
$W$ in $\Z_2\ast\Z_3=\<x,y|x^2=y^3=1\>$ whose trace polynomial is
$$\tau_W(\lambda)=\lambda(\lambda^2-2)^c(\lambda^4-3\lambda^2+1)^e.$$
By Lemma \ref{blocks} it suffices to consider words of the form
$$W=(xy)^{b(1)}(xy^2)^{b(2)}\cdots (xy)^{b(t-1)}(xy^2)^{b(t)}$$
with $t=2e+2$ such that $b(j)>1$ for precisely $c+e$ values of $j$ -- or equivalently
$b(j)=1$ for precisely $e+2-c$ values of $j$.  We encode $W$ as a list of
$t$ positive integers $[b(1),\dots,b(t)]$, of which $e+2-c$ are equal to $1$.
Since we are interested in classifying $W$ only up to cyclic permutation and inversion, it suffices also to consider lists only up to cyclic permutation and reversal.

In practice, we construct our list using GAP \cite{GAP} from two pieces of
input: a list $L$ of $c+e$  positive integers totalling $2c+2e-1$; and a subset
$C$ of $\{1,\dots,2e+2\}$ of size $e+2-c$.  The final list is obtained by adding $1$ to each element of $L$ and inserting a $1$ in the positions $C$.  This process
has two advantages: the sets of possible inputs $\mathcal{L},\mathcal{C}$ for
$L,C$ respectively are of more manageable size than the overall input set $\mathcal{L}\times\mathcal{C}$; and the resulting double programming loop facilitates
parallelisation of the process, making implementation possible in reasonable time.

Having constructed our list of integers, we use it to calculate $\tau_W(\lambda)$
and record only those lists that give the correct result.  As a final step,
we sift through the (now manageably small) list of recorded lists and delete
repetitions up to cyclic permutation and reversal.

For smaller values of $c$ and $e$, this algorithm can be carried out in
a reasonable time-frame with the standard version of GAP.  However, for
higher values it was essential to use the parallel version of GAP.
The longest search, with $(c,e)=(4,10)$, needed to test all possible lists
(up to cyclic permutation and reversal) of $22$ positive integers, such
that precisely $8$ entries are equal to $1$, and whose sum is $49$.  In practice,
$722030$ lists were tested by $32$ slave processors, each working for around $31240$ minutes (or just over $3$ weeks), to confirm that no word has trace polynomial
$\lambda(\lambda^2-2)^4(\lambda^4-3\lambda^2+1)^{10}$.

\newpage
\subsection{Search Results}

The table below shows all words (up to equivalence) with trace polynomial
of the form $\lambda(\lambda^2-2)^c(\lambda^4-3\lambda^2+1)^e$.

\medskip
\begin{center}
\begin{tabular}{|l||l|l|l|}
\hline
$n$ &$c$ & $e$ & $W_n$\\
\hline\hline
$1$ & $0$ & $0$ & $xy$\\
\hline
$2$ & $0$ & $1$ & $(xy)^2xy^2xyxy^2$\\
\hline
$3$ & $0$ & $2$ & $(xy)^3xy^2xy(xy^2)^2xyxy^2$\\
\hline
$4$ & $1$ & $0$ & $(xy)^2xy^2$\\
\hline
$5$ & $1$ & $1$ & $(xy)^3xy^2xy(xy^2)^2$\\
\hline
$6$ & $1$ & $2$ & $(xy)^3(xy^2)^3xyxy^2(xy)^2xy^2$\\
\hline
$7$ & $1$ & $2$ & $(xy)^4(xy^2)^2xyxy^2xy(xy^2)^2$\\
\hline
$8$ & $1$ & $3$ & $(xy)^4xy^2xy(xy^2)^3(xy)^2xy^2xy(xy^2)^2$\\
\hline
$9$ & $1$ & $4$ & $(xy)^4xy^2xy(xy^2)^3xy(xy^2)^2xyxy^2(xy)^2(xy^2)^3$\\
\hline
$10$ & $1$ & $4$ & $(xy)^4xy^2xy(xy^2)^3(xy)^3(xy^2)^2xy(xy^2)^2xyxy^2$\\
\hline
$11$ & $2$ & $0$ & $(xy)^3(xy^2)^2$\\
\hline
$12$ & $2$ & $1$ & $(xy)^3(xy^2)^3xy(xy^2)^2$\\
\hline
$13$ & $2$ & $2$ & $(xy)^4(xy^2)^3(xy)^2(xy^2)^2xyxy^2$\\
\hline
$14$ & $2$ & $3$ & $(xy)^4xy^2xy(xy^2)^2(xy)^3(xy^2)^3xy(xy^2)^2$\\
\hline
$15$ & $2$ & $3$ & $(xy)^4(xy^2)^3xyxy^2(xy)^2(xy^2)^3(xy)^2xy^2$\\
\hline
$16$ & $2$ & $3$ & $(xy)^4(xy^2)^3xy(xy^2)^3(xy)^2xy^2xy(xy^2)^2$\\
\hline
$17$ & $2$ & $4$ & $(xy)^4(xy^2)^2xyxy^2xy(xy^2)^2(xy)^3(xy^2)^4(xy)^2xy^2$\\
\hline
$18$ & $2$ & $4$ & $(xy)^4(xy^2)^3xy(xy^2)^2(xy)^3(xy^2)^3xyxy^2(xy)^2xy^2$\\
\hline
$19$ & $2$ & $4$ & $(xy)^4(xy^2)^4xyxy^2xy(xy^2)^2xy(xy^2)^2(xy)^3(xy^2)^2$\\
\hline
$20$ & $3$ & $1$ & $(xy)^4(xy^2)^3(xy)^2(xy^2)^2$\\
\hline
$21$ & $3$ & $2$ & $(xy)^4(xy^2)^3xy(xy^2)^2(xy)^3(xy^2)^2$\\
\hline
$22$ & $3$ & $2$ & $(xy)^4(xy^2)^3(xy)^2(xy^2)^3xy(xy^2)^2$\\
\hline
$23$ & $3$ & $4$ & $(xy)^4xy^2xy(xy^2)^4(xy)^2(xy^2)^3(xy)^2xy^2(xy)^2(xy^2)^3$\\
\hline
$24$ & $3$ & $6$ & $(xy)^4(xy^2)^4xyxy^2(xy)^2(xy^2)^3(xy)^3(xy^2)^2xy(xy^2)^3(xy)^3xy^2(xy)^2xy^2$\\
\hline
$25$ & $3$ & $6$ & $(xy)^5(xy^2)^3xy(xy^2)^2xyxy^2(xy)^3(xy^2)^4(xy)^2xy^2xy(xy^2)^2(xy)^3(xy^2)^2$\\
\hline
$26$ & $3$ & $8$ & $(xy)^5(xy^2)^4(xy)^3(xy^2)^2xyxy^2(xy)^2xy^2(xy)^3(xy^2)^3(xy)^2xy^2xy(xy^2)^4~\cdot$\\
& & & \hspace*{20mm} $
(xy)^2(xy^2)^2xyxy^2$\\
\hline
$27$ & $4$ & $4$ & $(xy)^4(xy^2)^3(xy)^2xy^2(xy)^3(xy^2)^2(xy)^3(xy^2)^4xy(xy^2)^2$\\
\hline
$28$ & $4$ & $4$ & $(xy)^4(xy^2)^3xy(xy^2)^2(xy)^3(xy^2)^4(xy)^2(xy^2)^3(xy)^2xy^2$\\
\hline
$29$ & $4$ & $4$ & $(xy)^4(xy^2)^4(xy)^2(xy^2)^3xy(xy^2)^2(xy)^3(xy^2)^3(xy)^2xy^2$\\
\hline
$30$ & $4$ & $5$ & $(xy)^4(xy^2)^3(xy)^2(xy^2)^2xy(xy^2)^4(xy)^4(xy^2)^2(xy)^3(xy^2)^2xyxy^2$\\
\hline
$31$ & $4$ & $6$ & $(xy)^4(xy^2)^2xyxy^2(xy)^2xy^2(xy)^3(xy^2)^4xy(xy^2)^3(xy)^4(xy^2)^3(xy)^2(xy^2)^2$\\
\hline
\end{tabular}

\vspace*{10mm}

Table 1\\
List of words with trace polynomial $\lambda(\lambda^2-2)^c(\lambda^4-3\lambda^2+1)^e$.
\end{center}

\section{Small Cancellation}\label{sc}

Several of the groups in Table 1 satisfy a small cancellation condition.
Recall that a word $U$ is a {\em piece} of a word $W$ (in the free product
$\Z_2\ast\Z_3$)
if there are distinct words
$V_1,V_2$ such that each of $U\cdot V_1$ and $U\cdot V_2$ is cyclically reduced as
written and
a cyclic permutation of $W^{\pm 1}$.
The table below expresses the words concerned (up to cyclic permutation)
as a product of $3$  non-pieces, each  of  even length at least $8$.  Following \cite[Corollary 2.3]{How} this is enough to
confirm the existence of non-abelian free subgroups.

\medskip
\begin{center}
\begin{tabular}{|l|l|}
\hline
$n$. &  $W_n$\\
\hline\hline
$9$ & $[(xy)^4] \cdot [xy^2xy(xy^2)^3xy] \cdot [(xy^2)^2xyxy^2(xy)^2(xy^2)^3]$\\
\hline
$10$ & $[(xy)^4] \cdot [xy^2xy(xy^2)^3xy] \cdot [(xy)^2(xy^2)^2xy(xy^2)^2xyxy^2]$\\
\hline
$14$ & $[(xy)^4] \cdot [xy^2xy(xy^2)^2(xy)^3xy^2] \cdot [(xy^2)^2xy(xy^2)^2] $\\
\hline
$15$ & $[(xy)^4] \cdot [(xy^2)^3xyxy^2xy]\cdot [xy(xy^2)^3(xy)^2xy^2]$\\
\hline
$17$ & $[(xy)^2(xy^2)^2xy] \cdot [xy^2xy(xy^2)^2(xy)^3xy^2] \cdot [(xy^2)^3(xy)^2xy^2(xy)^2]$\\
\hline
$18$ & $[(xy)^4] \cdot [(xy^2)^3xy(xy^2)^2(xy)^2] \cdot [xy(xy^2)^3xyxy^2(xy)^2xy^2]$\\
\hline
$19$ & $[(xy)^4(xy^2)^4xyxy^2] \cdot [xy(xy^2)^2xyxy^2] \cdot [xy^2(xy)^3(xy^2)^2]$\\
\hline
$22$ & $[(xy)^4] \cdot [(xy^2)^3(xy)^2(xy^2)^2] \cdot [xy^2xy(xy^2)^2]$\\
\hline
$23$ & $[xyxy^2xy(xy^2)^4(xy)^2xy^2] \cdot [(xy^2)^2(xy)^2xy^2xy] \cdot [xy(xy^2)^3(xy)^3]$\\
\hline
$24$ & $[(xy)^3(xy^2)^4xyxy^2xy] \cdot [xy(xy^2)^3(xy)^3(xy^2)^2xy(xy^2)^2] \cdot$\\
 & \hspace*{10mm} $ [xy^2(xy)^3xy^2(xy)^2xy^2xy]$\\
\hline
$25$ & $[(xy)^5] \cdot [(xy^2)^3xy(xy^2)^2] \cdot [xyxy^2(xy)^3(xy^2)^4(xy)^2xy^2xy(xy^2)^2(xy)^3(xy^2)^2]$\\
\hline
$26$ & $[(xy)^5] \cdot [(xy^2)^4(xy)^3] \cdot$\\
 & \hspace*{10mm} $ [(xy^2)^2xyxy^2(xy)^2xy^2(xy)^3(xy^2)^3(xy)^2xy^2xy(xy^2)^4
(xy)^2(xy^2)^2xyxy^2]$\\
\hline
$27$ & $[(xy)^4(xy^2)^3(xy)^2xy^2xy] \cdot [ (xy)^2(xy^2)^2xy] \cdot [(xy)^2(xy^2)^4xy(xy^2)^2]$\\
\hline
$28$ & $[(xy^2)^3xy(xy^2)^2(xy)^3(xy^2)^3] \cdot [xy^2(xy)^2(xy^2)^2] \cdot [xy^2(xy)^2xy^2(xy)^4]$\\
\hline
$30$ & $[(xy)^4(xy^2)^3] \cdot [(xy)^2(xy^2)^2xy(xy^2)^2] \cdot [(xy^2)^2(xy)^4(xy^2)^2(xy)^3(xy^2)^2xyxy^2$\\
\hline
$31$ & $[(xy)^4(xy^2)^2xy] \cdot [xy^2(xy)^2xy^2xy] \cdot$\\
 & \hspace*{10mm} $ [(xy)^2(xy^2)^4xy(xy^2)^3(xy)^4(xy^2)^3(xy)^2(xy^2)^2]$\\
\hline
\end{tabular}

\vspace*{10mm}

Table 2\\
Words from Table 1 satisfying the small cancellation condition.
\end{center}

\section{Almost small cancellation}\label{asc}

The group $G=G_{13}$ in our table does not satisfy a small cancellation condition, but comes sufficiently close for small-cancellation methods
to apply.

Thus while $G$ does not satisfy the small cancellation condition $C(6)$
(as a quotient of the free product $\Z_2*\Z_3$), it does satisfy $C(5)$
in a fairly strong form.  We will exploit this to obtain the existence of
a non-abelian free subgroup.

\begin{theorem}\label{free13}
Let $G=\<x,y|x^2=y^3=((xy)^4(xy^2)^3(xy)^2(xy^2)^2xyxy^2)^2=1\>$.
Let $N\gg 1$ be an integer.  Then $A:=(xy)^N$ and $B:=(xy^{-1})^N$ freely generate a free subgroup of $G$.
\end{theorem}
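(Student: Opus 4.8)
The plan is to show that the natural map from the free group $\Phi = F(A,B)$ to $G$ is injective, working inside the free product $F = \Z_2 \ast \Z_3$ and viewing $G$ as the quotient of $F$ by the normal closure of the single relator $R^2$, where $R = (xy)^4(xy^2)^3(xy)^2(xy^2)^2xyxy^2$. First I would record that, for $N \gg 1$, the elements $A = (xy)^N$ and $B = (xy^2)^N$ (note $xy^{-1}=xy^2$) already freely generate a free subgroup of $F$: their axes in the Bass--Serre tree of $F$ have translation length growing with $N$, so a ping-pong argument applies. It therefore suffices to show that no freely reduced nonempty word $V = A^{n_1}B^{m_1}\cdots A^{n_k}B^{m_k}$ in $\Phi$, spelled out as a cyclically reduced word $\widehat V$ over $\{x,y\}$, becomes trivial in $G$. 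The crucial structural feature is that $\widehat V$ is a concatenation of $2k$ long maximal runs, alternately of the form $(xy)^{\pm N}$ and $(xy^2)^{\pm N}$.

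Suppose for contradiction that $\widehat V =_G 1$. Then there is a nonempty reduced picture $\Gamma$ over $(F; R^2)$ on the disc whose boundary label is a cyclic conjugate of $\widehat V$; its regions are discs labelled by cyclic conjugates of $R^{\pm 2}$ and its arcs carry the syllables of the relator. The second step is to verify, by directly enumerating the overlaps between cyclic conjugates of $R^{\pm 2}$, that the presentation satisfies $C(5)$ together with the strong form referred to above: I would list exactly which adjacencies of maximal pieces can occur and show that every interior region is bounded by at least five pieces, with the borderline four-piece configurations occurring only in a short explicit list of patterns dictated by the block lengths $4,3,2,2,1,1$ of $R$. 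A useful consequence recorded here is that no cyclic conjugate of $R^{\pm 2}$ contains $(xy)^5$ or $(xy^2)^4$ as a subword, so any boundary arc of $\Gamma$ lying inside a single run of $\widehat V$ has syllable-length at most $4$; hence each run of length $N$ is partitioned into at least $\lceil N/4\rceil$ boundary arcs of discs.

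The third step is a combinatorial Gauss--Bonnet argument. I would assign angles to the corners of $\Gamma$ so that, using $C(5)$, every interior vertex and every interior region carries non-positive curvature, and quantify the strict negativity forced by the strong form of $C(5)$ away from the listed flat patterns. Since $\Gamma$ is a disc picture, its total curvature equals $2\pi$, concentrated on the boundary. I would then argue that each of the $2k$ \emph{corners} of $\widehat V$ (the transitions between an $(xy)^{\pm N}$-run and an $(xy^2)^{\pm N}$-run) can contribute only a bounded amount of positive boundary curvature, independent of $N$, while each long run contributes, through the $\lceil N/4\rceil$ arcs along it, a curvature deficit that grows linearly in $N$. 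Taking $N$ large then forces the total curvature to be strictly less than $2\pi$, the desired contradiction.

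The main obstacle is precisely the flatness permitted by $C(5)$ rather than $C(6)$: zero-curvature ``corridors'' of discs can run alongside a long boundary run without contributing any deficit, so the naive count above must be justified. The heart of the matter is to show that such a corridor cannot close up into an annulus, and cannot turn around at a boundary corner or reconnect two distinct runs, without either creating one of the excluded piece-configurations, forcing a cancellation that contradicts reducedness of $\Gamma$, or introducing a syllable block longer than any available in $R$. This is exactly where the explicit block structure of $R$ and the strong form of $C(5)$ are indispensable: each forced turn of a corridor incurs a fixed positive curvature cost, and the finitely many corners of $\widehat V$ cannot absorb this cost once $N$ exceeds the bound implicit in the piece estimates. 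Carrying out this corridor analysis by means of pictures is the bulk of the work and is deferred to Appendix~C.
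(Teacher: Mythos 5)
Your overall framework (pictures over the one-relator product, a small-cancellation analysis of pieces, combinatorial Gauss--Bonnet on a disc picture) matches the paper's, but there is a genuine gap at the centre of your third step, and it is exactly the point on which the paper's Appendix C spends most of its effort. You assert that, ``using $C(5)$'', one can assign angles so that \emph{every} interior vertex and \emph{every} interior region has non-positive curvature. The condition $C(5)$ alone cannot justify this when triangular regions are present (and here they are not only present but the paper deliberately maximises their number): an interior vertex of degree $5$ all of whose corners lie in triangles needs some corner angle at least $2\pi/5$ to be non-positively curved, while a triangle needs its three angles to sum to at most $\pi$, i.e.\ average below $\pi/3<2\pi/5$; this is the icosahedral tension, and it cannot be resolved by any choice of angles unless one \emph{proves} that the vertices adjacent to such a degree-$5$ vertex have large degree. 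That is a structural fact about the specific relator $W_{13}$, not a consequence of $C(5)$. Indeed Theorem \ref{posvertex} of the paper shows such positively curved vertices cannot be excluded locally -- it \emph{characterises} them (corner labels $5,A,4,7,1$) -- and the proof then proceeds by a discharging argument that your proposal lacks entirely: each positively curved interior vertex transfers $\pi/6$ to each of its two \emph{associated} neighbours, and the precise matching structure of the pieces of $W_{13}$ is used (Lemma \ref{last}) to show those neighbours have degree at least $7$ and can absorb the transfer. Also needed, and absent from your sketch, is the choice of minimal picture maximising the number of triangular regions, which is what allows bridge moves to dispose of would-be positively curved quadrilateral and pentagonal regions.

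Your endgame accounting is also both unsubstantiated and circular. The claim that each run $(xy)^{\pm N}$ or $(xy^2)^{\pm N}$ of the boundary word contributes a curvature deficit growing linearly in $N$, while the $2k$ transitions contribute only bounded positive curvature, is precisely what your ``corridor'' obstacle threatens; you concede this is the bulk of the work and then defer it to Appendix C -- but Appendix C \emph{is} the paper's proof, not a resource available to a blind argument. Note that the paper never needs a linear-in-$N$ deficit: after discharging, all vertices and all regions (interior and boundary alike) have non-positive amended curvature, contradicting the fact that the total curvature of a disc picture is $+2\pi$; the hypothesis $N\gg 1$ enters only to restrict how cyclic subwords of $W^2$ can match the boundary word (the four listed maximal matches), which in turn guarantees that boundary vertices have enough corners. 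So the two ingredients that actually make the theorem true -- the discharging of positive interior curvature and the boundary-match analysis -- are both missing from your proposal.
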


As the proof of this result is substantial and uses
technology which is different from the main thrust of the
paper, we have included it in Appendix C
below.

\section{Ad-hoc arguments}\label{ah}

By \S \ref{sc} and Theorem \ref{free13} we have $14$ exceptional words to consider, as follows.

\medskip
\begin{center}
\begin{tabular}{|l|l|}
\hline
$n$ & $W_n$\\
\hline\hline
$1$ & $xy$\\
\hline
$2$ & $(xy)^2xy^2xyxy^2$\\
\hline
$3$ & $(xy)^3xy^2xy(xy^2)^2xyxy^2$\\
\hline
$4$ & $(xy)^2xy^2$\\
\hline
$5$ & $(xy)^3xy^2xy(xy^2)^2$\\
\hline
$6$ & $(xy)^3(xy^2)^3xyxy^2(xy)^2xy^2$\\
\hline
$7$ & $(xy)^4(xy^2)^2xyxy^2xy(xy^2)^2$\\
\hline
$8$ & $(xy)^4xy^2xy(xy^2)^3(xy)^2xy^2xy(xy^2)^2$\\
\hline
$11$ & $(xy)^3(xy^2)^2$\\
\hline
$12$ & $(xy)^3(xy^2)^3xy(xy^2)^2$\\
\hline
$16$ & $(xy)^4(xy^2)^3xy(xy^2)^3(xy)^2xy^2xy(xy^2)^2$\\
\hline
$20$ & $(xy)^4(xy^2)^3(xy)^2(xy^2)^2$\\
\hline
$21$ & $(xy)^4(xy^2)^3xy(xy^2)^2(xy)^3(xy^2)^2$\\
\hline
$29$ & $(xy)^4(xy^2)^4(xy)^2(xy^2)^3xy(xy^2)^2(xy)^3(xy^2)^3(xy)^2xy^2$\\
\hline
\end{tabular}

\vspace*{10mm}

Table 3\\
Exceptional words.
\end{center}

\bigskip
Let us denote by $G_n$ the group
$$\<x,y|x^2=y^3=W_n(x,y)^2=1\>,$$
where $W_n$ is the word listed in Table 1.
The Rosenberger Conjecture is known for words with length parameter
$\ell\le 6$ \cite{Will}.  This covers groups $G_1,G_2,G_4$ and $G_{11}$.
Indeed, $G_1,G_2,G_4$ are finite of orders $6$, $720$ and $48$ respectively,
while $G_{11}$ is an extension of $\Z^3$ by $A_4$ -- see  Appendix B below
for a verification of this using GAP.

\medskip
Groups $G_3$, $G_7$ and $G_{29}$ can be shown to be large using GAP \cite{GAP}
-- see  Appendix B.
(Recall that a group $G$ is {\em large} if some subgroup $H<G$ of finite index
admits an epimorphism $H\twoheadrightarrow F_2$ onto the free group $F_2$ of
rank $2$.  Large groups clearly contain non-abelian free subgroups.)
Now the relators of $G_6$ are consequences of those of $G_3$, so it
follows that group $G_3$ is a homomorphic image of $G_6$.  Since
$G_3$ is large, $G_6$ is also large.

\begin{lemma}
Group $G_5$  contains a non-abelian free subgroup.
\end{lemma}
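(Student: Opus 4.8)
The plan is to analyze the group
$$G_5=\<x,y\mid x^2=y^3=\big((xy)^3xy^2xy(xy^2)^2\big)^2=1\>,$$
whose defining word $W_5=(xy)^3xy^2xy(xy^2)^2$ has length parameter $\ell=7$ (so $c=e=1$). Since $\ell=7$ lies just above the range $\ell\le 6$ covered by \cite{Will}, and since $W_5$ did not appear in Table~2, small-cancellation theory in the direct form of \cite[Corollary 2.3]{How} is unavailable. My first move would therefore be computational in spirit: I would search for a low-index subgroup $H<G_5$ of finite index whose abelianisation $H^{\mathrm{ab}}$ has large rank, in the hope of exhibiting $G_5$ as large in the sense of \S\ref{ah}. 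Concretely, I would use GAP \cite{GAP} to enumerate subgroups of small index via the low-index-subgroups algorithm, compute the abelianised invariants of each, and look for a finite-index $H$ admitting an epimorphism onto $F_2$ (equivalently, with at least two infinite cyclic factors in $H^{\mathrm{ab}}$, after checking this yields a genuine free quotient). This is exactly the strategy already used for $G_3$, $G_7$, $G_{29}$ in the preceding paragraph, and it is the most natural first attempt.

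If largeness is not immediately visible at small index, the alternative approach is the trace-polynomial/representation-variety method of \S\ref{tracepoly}. Here I would look for an \emph{essential} representation $\rho:G_5\to\mathrm{PSL}_2(\C)$ whose image is non-elementary; by Tits' alternative such an image contains a non-abelian free subgroup, hence so does $G_5$. The trace polynomial $\tau_{W_5}(\lambda)=\lambda(\lambda^2-2)(\lambda^4-3\lambda^2+1)$ has, besides the elementary roots catalogued in \S\ref{tracepoly}, no further real roots forcing ellipticity; the point is that the construction of \S\ref{tracepoly} produces representations only at roots $\alpha$ of $\tau_{W_5}$, and all such roots here correspond to elementary images (that is precisely why $W_5$ survived into Table~1). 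So a \emph{direct} essential representation into $\mathrm{PSL}_2(\C)$ will not suffice, and I would instead pass to a finite-index subgroup — again most naturally $[G_5,G_5]$ or a low-index $H$ — and attempt to build an irreducible representation of $H$ into $\mathrm{PSL}_2(\C)$ with non-elementary image, following the Gr\"obner-basis template of Theorem~\ref{G7b}. One would present $H$, solve the resulting polynomial system for matrix entries $M,N$, verify irreducibility via the trace condition
$$\tr(M)^2+\tr(N)^2+\tr(MN)^2+\tr(M)\tr(N)\tr(MN)\neq 4,$$
and check that the relevant traces are not traces of finite-order elements, ruling out finite or dihedral image.

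The main obstacle I anticipate is \emph{certifying non-ellipticity/non-elementariness rigorously} rather than merely computationally: establishing that the constructed image is genuinely non-elementary (not finite, dihedral, or contained in a parabolic/cyclic subgroup) requires either an exact algebraic-number argument about the traces, as in Theorem~\ref{G7b}, or an independent verification that the finite quotients detected in GAP actually arise from a subgroup of infinite index mapping onto $F_2$. Because $\ell=7$ is small, I would expect a finite-index subgroup of modest index to do the job, so the principal effort is in the bookkeeping: correctly computing the presentation of $H$, and then discharging the irreducibility and order conditions exactly. If the largeness search succeeds at very small index — which is likely given the pattern for the neighbouring groups in Table~3 — then this entire representation-theoretic apparatus is unnecessary, and the cleanest proof is simply to record the GAP computation exhibiting a finite-index subgroup of $G_5$ that surjects onto $F_2$, with the detailed session deferred to Appendix~B.
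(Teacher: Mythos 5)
There is a genuine gap: your proposal is a contingency plan, not a proof. Both of your routes defer the entire content to computations that are never carried out, and neither is known (or even likely) to succeed. For the largeness route, note that the paper does verify largeness by GAP for $G_3$, $G_7$ and $G_{29}$, yet conspicuously not for $G_5$; there is no evidence that $G_5$ has a finite-index subgroup surjecting onto $F_2$ detectable at small index, so ``record the GAP session in Appendix B'' is a hope, not an argument. For the representation route, you correctly observe that every essential $\mathrm{PSL}_2(\C)$ representation of $G_5$ itself is elementary (which is exactly why $W_5$ survived into Table 1), but you then assume that $[G_5,G_5]$ admits an irreducible non-elementary representation into $\mathrm{PSL}_2(\C)$ that a Gr\"obner-basis computation in the style of Theorem \ref{G7b} will find. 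Nothing guarantees such a representation exists, and the paper never exhibits one: the obstruction that kills the two-dimensional representations of $G_5$ is not shown to disappear on passing to a finite-index subgroup, and in fact the paper's eventual $\mathrm{PSL}_2(\C)$ group is not a representation of $[G_5,G_5]$ at all, but only of a specific two-generator subgroup, obtained indirectly.

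The idea your plan is missing is to change dimension. The paper's proof starts from the already-known L\'evai--Rosenberger--Souvignier representation $\rho:G'\to \mathrm{SL}(3,\C)$ of the commutator subgroup, whose image is generated by two matrices $X,Y$ of order $3$. One checks that $XY$ (hence also $YX$) has eigenvalues $+1,-1,-1$ with one-dimensional $(-1)$-eigenspace, so $(XY)^2$ and $(YX)^2$ are nontrivial unipotents fixing pointwise the planes $V$ and $V'$ spanned by the eigenvectors of $XY$ and $YX$ respectively; a short calculation shows $V\ne V'$, so $L:=V\cap V'$ is a line fixed by both. Acting on the quotient plane $\C^3/L$, the elements $(XY)^2$ and $(YX)^2$ become parabolics of $\mathrm{PSL}(2,\C)$ with distinct fixed subspaces, hence generate a non-elementary subgroup, which contains a non-abelian free subgroup; so $G_5$ does too. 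This detour through a three-dimensional representation, followed by projection to get parabolic elements of $\mathrm{PSL}_2(\C)$, is precisely what circumvents the unavailability of useful two-dimensional representations, and it does not appear anywhere in your proposal.
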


\begin{pf}
Group $G_5$ was shown to be infinite by L\'{e}vai, Rosenberger and Souvignier
\cite{LRS}.  Their proof constructs a representation $\rho:G'\to SL(3,\C)$
of the commutator subgroup $G'$, the image of which is generated by two matrices
$X,Y$ of order $3$. A more detailed analysis of the matrices in \cite{LRS}
shows that $XY$ (and hence also $YX$) has eigenvalues $+1,-1,-1$, with the
$(-1)$-eigenspace having dimension $1$. Specifically,
$$XY=\left(\begin{array}{rrl} -2& -1& t^{-1} \\ 1& 0& 0 \\ 0& 0& 1\end{array}\right),$$
where $t$ is an algebraic number with $t^6-3t^3+1=0$.
Let $V$ be the plane spanned by the eigenvectors of $XY$, namely
$v_{-1}:=(1,-1,0)^T$ and $v_{+1}:=(1,1,4t)^T$.  Then a calculation shows that $YX(v_{-1})\not\in V$ so $V$ is not invariant under $YX$.  If $V'$ is the plane
spanned by the eigenvectors of $YX$, then $V\ne V'$ so $L:=V\cap V'$ is a line.
Now $(XY)^2$ and $(YX)^2$ fix $V$ and $V'$ respectively (pointwise), and so they both fix $L$.
Under the induced action on the quotient plane $\C^3/L$, $(XY)^2$ and
$(YX)^2$ are parabolic with distinct fixed subspaces.  Hence they generate
a non-elementary subgroup of $PSL(2,\C)$.  This subgroup, and hence also $G$,
contains a non-abelian free subgroup.
\end{pf}

\begin{cor}
Each of the groups $G_8$, $G_{12}$, $G_{16}$ and $G_{21}$ contains a non-abelian
free subgroup.
\end{cor}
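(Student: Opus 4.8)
The plan is to exhibit $G_5$ as a homomorphic image of each of $G_8$, $G_{12}$, $G_{16}$ and $G_{21}$, and then to pull the free subgroup of $G_5$ back along the quotient maps. This follows the template used just above to pass largeness from $G_3$ to $G_6$: if the relators of $G_n$ are consequences of the relators of $G_5$, then the assignment $x\mapsto x$, $y\mapsto y$ extends to a homomorphism $\phi_n\colon G_n\to G_5$, which is automatically onto since $x,y$ generate $G_5$.

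I would first record the principle that lets the free subgroup descend from image to source. Suppose $\phi\colon G_n\twoheadrightarrow G_5$ is onto and $F=\<u,v\>\le G_5$ is free of rank $2$, as provided by the preceding Lemma. Choose preimages $\tilde u,\tilde v\in G_n$ with $\phi(\tilde u)=u$ and $\phi(\tilde v)=v$, and set $H=\<\tilde u,\tilde v\>$. Then $\phi(H)=F$, and for any non-trivial reduced word $w$ the element $w(\tilde u,\tilde v)$ cannot be trivial in $H$, since $\phi$ would send it to $w(u,v)\neq 1$ in the free group $F$. Hence $H\cong F_2$, so $G_n$ contains a non-abelian free subgroup. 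Equivalently, the surjection $H\to F$ onto a free group splits.

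The substance of the proof is thus to verify, for each $n\in\{8,12,16,21\}$, the single relation $W_n^2=1$ in $G_5$, i.e.\ that $W_n^2$ lies in the normal closure of $\{x^2,y^3,W_5^2\}$; this is exactly what makes $\phi_n$ well defined. I would establish each such relation explicitly, presenting $W_n^2$ as a product of conjugates of the three relators of $G_5$ (equivalently, as the boundary label of a van Kampen diagram over that presentation). One natural route to such an expression is to show that, modulo $W_5^2=1$, the word $W_n$ is conjugate in $G_5$ to $W_5^{\pm 1}$, for then $W_n^2$ is conjugate to $W_5^{\pm 2}=1$; the conjugating elements and the intermediate cancellations can be sought by hand and then independently confirmed by the GAP computation recorded in Appendix B.

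The main obstacle is precisely this verification. In contrast to the cases handled in \S\ref{sc}, nothing in the block decompositions of $W_8,W_{12},W_{16},W_{21}$ signals that their squares should collapse in $G_5$, so the consequence relations must be produced by direct manipulation (or by coset enumeration) rather than read off combinatorially from the words. Once they are available, the lifting principle of the second paragraph applies verbatim to each index $n$, and the four groups are seen to contain non-abelian free subgroups, as required.
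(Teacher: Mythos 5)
Your proposal is correct and is essentially the paper's own argument: one checks that $W_n^2=1$ in $G_5$ for $n=8,12,16,21$, so that $G_5$ is a homomorphic image of each $G_n$, and then the non-abelian free subgroup of $G_5$ (from the preceding Lemma) is pulled back through the surjection exactly as in your second paragraph, which makes explicit the lifting principle the paper uses implicitly. The paper likewise leaves the verification that each $W_n^2$ is trivial in $G_5$ as an exercise, so your proposal matches it in both structure and level of detail (note only that Appendix B contains no GAP log for this particular check).
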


\begin{pf}
 It is an exercise to show that, for $j=8,12,16,21$,
the element of $G_5$ represented by $W_j^2$ is trivial.  Hence $G_5$ is a homomorphic
image of $G_j$.  Since $G_5$ contains a non-abelian free subgroup, so does
$G_j$.
\end{pf}

\section{Conclusion}

Putting together the results of \S\S \ref{search}, \ref{sc} and \ref{ah} we obtain

\begin{theorem}
Let $$G=\<x,y|x^2=y^3=W(x,y)^2=1\>$$ be a generalised triangle group, where the
exponent-sums of $x,y$ in $W$ are coprime to $2,3$ respectively.  Then either
$G$ contains a non-abelian free subgroup, or $G$ has a soluble subgroup of finite index, except possibly where $G$ is isomorphic to the group
$$G_{20}=\<x,y|x^2=y^3=((xy)^4(xy^2)^3(xy)^2(xy^2)^2)^2=1\>.$$
\end{theorem}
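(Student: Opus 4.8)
The plan is to assemble the results of the preceding sections into a single exhaustive case analysis, using the arithmetic hypothesis as the entry point into the odd-length machinery. First I would translate the conditions: the exponent-sum of $x$ in $W$ is coprime to $2$ exactly when $\ell$ is odd, and the exponent-sum of $y$ being coprime to $3$ is precisely the statement that it is not divisible by $3$. Thus $G$ satisfies the hypotheses of Theorem \ref{Calculation}. Assuming that $G$ contains no non-abelian free subgroup, so that it is a putative counterexample, Theorem \ref{Calculation} forces $\tau_W(\lambda)=\lambda(\lambda^2-2)^c(\lambda^4-3\lambda^2+1)^e$ with $c\le 4$ and $e\le 2c+2$, whence $\ell\le 49$. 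The search of \S\ref{search} then shows that, up to cyclic permutation and inversion, $W$ must be one of the $31$ words $W_1,\dots,W_{31}$ of Table 1; since these operations preserve the isomorphism type of $G$, it suffices to verify the conclusion for each $G_n$ with $n\ne 20$.

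Next I would discharge the groups handled by small cancellation. For the $16$ indices $n\in\{9,10,14,15,17,18,19,22,23,24,25,26,27,28,30,31\}$, Table 2 exhibits $W_n$ (up to cyclic permutation) as a product of three non-pieces, each of even length at least $8$; by \cite[Corollary 2.3]{How} each corresponding $G_n$ therefore contains a non-abelian free subgroup. The one remaining group not covered by ordinary small cancellation, $G_{13}$, is disposed of by Theorem \ref{free13}, which produces an explicit free subgroup via the almost-small-cancellation argument of Appendix C.

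It then remains to treat the $14$ words of Table 3 using the ad-hoc arguments of \S\ref{ah}. For $G_1,G_2,G_4,G_{11}$, whose length parameter is at most $6$, Rosenberger's conjecture is already known \cite{Will}; concretely $G_1,G_2,G_4$ are finite and $G_{11}$ is an extension of $\Z^3$ by $A_4$, hence virtually abelian, so each lies on the virtually-soluble side of the dichotomy. The groups $G_3,G_7,G_{29}$ are shown to be large by GAP computation (Appendix B), and $G_6$ maps onto the large group $G_3$, so all four contain non-abelian free subgroups. The group $G_5$ contains a non-abelian free subgroup by the $SL(3,\C)$ lemma proved above, and $G_8,G_{12},G_{16},G_{21}$ each map onto $G_5$ (since $W_j^2$ is trivial in $G_5$ for $j=8,12,16,21$), hence inherit such a subgroup by the corollary. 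Collecting these, every $G_n$ with $n\ne 20$ either contains a non-abelian free subgroup or is virtually soluble, leaving $G_{20}$ as the sole possible exception.

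The conceptual difficulty does not lie in this final synthesis, which is essentially bookkeeping, but in guaranteeing that the case analysis is genuinely exhaustive. The two load-bearing inputs are the completeness of the search establishing Table 1, which rests on the heavy parallel GAP computation of \S\ref{search} and must be trusted to have omitted no admissible word, and the delicate treatment of $G_{13}$ in Theorem \ref{free13}, where the $C(6)$ condition fails and a strengthened $C(5)$ analysis is required. Granting these, the chief risk in the present argument is clerical: an omitted or misattributed index would break the claim that $G_{20}$ is the unique survivor, so the crux is to confirm that the index sets treated in \S\ref{sc}, by Theorem \ref{free13}, and in \S\ref{ah} partition $\{1,\dots,31\}\setminus\{20\}$.
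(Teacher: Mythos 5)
Your proposal is correct and follows exactly the paper's own route: the paper proves this theorem in one line by ``putting together the results of \S\S 4, 5 and 6,'' and your write-up is precisely that assembly --- Theorem \ref{Calculation} plus the search giving Table 1, small cancellation for the $16$ groups of Table 2, Theorem \ref{free13} for $G_{13}$, and the ad-hoc arguments for the remaining $13$ groups of Table 3, leaving $G_{20}$. Your index bookkeeping ($16+1+13+1=31$) and the translation of the coprimality hypotheses into ``$\ell$ odd and $y$-exponent-sum not divisible by $3$'' both check out against the paper.
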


This, together with the observations on the even-length case in \S \ref{even} and
in Appendix A, complete the proof of Theorem \ref{main}.

\appendix{Even length relators}\label{app3}

In \cite{arxiv} the Rosenberger conjectured was verified for generalised triangle groups
of the form
$$G=\<x,y|x^2=y^2=W(x,y)^2=1\>$$
in which $x$ appears in $W$ with even exponent-sum, with precisely six exceptions.
The exceptional groups are defined by setting $W$ to be one of
\begin{enumerate}
\item $W_{7a}:=(xy)^4(xy^2)^3(xy)^2xy^2$;
\item $W_{7b}:=(xy)^3(xy^2)^2xy(xy^2)^2xyxy^2$;
\item $W_{9a}:=(xy)^5(xy^2)^3(xy)^2xy^2xy(xy^2)^2$;
\item $W_{12}:=(xy)^4(xy^2)^2xy(xy^2)^3(xy)^2xy^2xy(xy^2)^2$;
\item $W_{13a}:=(xy)^4(xy^2)^4xy(xy^2)^3(xy)^2xy^2xy(xy^2)^2$;
\item $W_{15a}:=(xy)^4(xy^2)^4xy(xy^2)^2xy(xy^2)^3(xy)^3(xy^2)^2xyxy^2$.
\end{enumerate}

Here we give the computational details of proofs that all of the above,
with the possible exception of $W_{7a}$, give rise to groups
containing non-abelian free subgroups.

Jack Button (private communication) informed us that he could prove, using his largeness-testing software \cite{Button}, that the groups with $W\in\{W_{9a},W_{12},W_{13a},W_{15a}\}$
are large -- and hence in particular contain non-abelian free subgroups.  Below is a log
of a GAP session verifying this fact.

\begin{verbatim}
gap> Epi:=function(P,ll)
> local i,gg,Q;
> Q:=ShallowCopy(P);
> gg:=GeneratorsOfPresentation(Q);
> for i in ll do if i<1+Length(gg) then AddRelator(Q,gg[i]); fi; od;
> TzGoGo(Q);
> TzPrint(Q);
> return(Q);
> end;
function( P, ll ) ... end
gap> F:=FreeGroup(2);; x:=F.1;; y:=F.2;; U:=x*y;; V:=x*y^2;;
gap> W9:=U^5*V^3*U^2*V*U*V^2;;
gap> W12:=U^4*V^2*U*V^3*U^2*V*U*V^2;;
gap> W13:=U^4*V^4*U*V^3*U^2*V*U*V^2;;
gap> W15:=U^4*V^4*U*V^2*U*V^3*U^3*V^2*U*V;;
gap> G9:=F/[x^2,y^3,W9^2];;
gap> G12:=F/[x^2,y^3,W12^2];;
gap> G13:=F/[x^2,y^3,W13^2];;
gap> G15:=F/[x^2,y^3,W15^2];;
gap> x:=G9.1;; y:=G9.2;;
gap> H9:=Subgroup(G9,[x,(y*x)^2*(y*x^-1)^2*y,y*x*y^-1*x*(y*x^-1)^2*y^-1,
 (y*x)^2*y^-1*x*y^-1*x^-1*y*x^-1*y^-1*x^-1*y ]);;
gap> x:=G12.1;; y:=G12.2;;
gap> H12:=Subgroup(G12,[x,(y*x)^2*(y*x^-1)^2*y,y*x*y^-1*x*(y*x^-1)^2*y^-1,
 (y*x)^2*y^-1*x*y^-1*x^-1*y*x^-1*y^-1*x^-1*y ]);;
gap> P9:=PresentationSubgroup(G9,H9);
<presentation with 6 gens and 14 rels of total length 134>
gap> P12:=PresentationSubgroup(G12,H12);
<presentation with 6 gens and 14 rels of total length 154>
gap> Index(G9,H9);
20
gap> Index(G12,H12);
20
gap> Epi(P9,[4,6]);
#I  there are 2 generators and 1 relator of total length 2
#I  generators: [ _x1, _x2 ]
#I  relators:
#I  1.  2  [ 1, 1 ]
<presentation with 2 gens and 1 rels of total length 2>
gap> Epi(P12,[4,6]);
#I  there are 2 generators and 1 relator of total length 2
#I  generators: [ _x1, _x2 ]
#I  relators:
#I  1.  2  [ 1, 1 ]
<presentation with 2 gens and 1 rels of total length 2>
gap> H13:=Subgroup(G13,[(G13.1*G13.2)^5]);;
gap> P13:=PresentationNormalClosure(G13,H13);
<presentation with 11 gens and 30 rels of total length 396>
> gap> Epi(P13,[1..4]);
gap> Epi(P13,[1..4]);
#I  there are 2 generators and 0 relators of total length 0
#I  generators: [ _x5, _x6 ]
#I  there are no relators
<presentation with 2 gens and 0 rels of total length 0>
gap> H15:=Subgroup(G15,[(G15.1*G15.2)^5]);;
gap> P15:=PresentationNormalClosure(G15,H15);
<presentation with 11 gens and 30 rels of total length 484>
gap> Epi(P15,[1..4]);
#I  there are 2 generators and 1 relator of total length 8
#I  generators: [ _x5, _x8 ]
#I  relators:
#I  1.  8  [ -2, 1, 2, 2, 1, -2, -1, -1 ]
<presentation with 2 gens and 1 rels of total length 8>
gap> #
gap> # Each group G9, G12, G13, G15 has a finite index subgroup that admits
gap> # an epimorphism onto a large group.
gap> # Therefore each is itself large.
gap> #
\end{verbatim}

In the case of the group defined by $W=W_{7b}$, while we are unable to prove largeness, we
can verify the existence of non-abelian free subgroups (and hence the Rosenberger conjecture)
by constructing an irreducible essential representation $\sigma$ to $PSL(2,\C)$
from the commutator subgroup $[G,G]$ of  $G$ (which has index $6$). This is
outlined in the proof of Theorem \ref{G7b}. The computational details are
as follows.  First we use GAP \cite{GAP} to obtain the presentation of
$[G,G]$ stated in the proof of Theorem \ref{G7b}:

\begin{verbatim}
gap> #
gap> #  Calculation of presentation of derived subgroup [G,G] of 
gap> #
gap> #    G = < x, y | x^2 = y^3 = W^2 = 1 >, where
gap> #
gap> #  W = (xy)^3 (xy^2)^2 xy (xy^2)^2 xy xy^2
gap> #
gap> F:=FreeGroup(["x","y"]);; x:=F.1;; y:=F.2;;
gap> W:=(x*y)^3*(x*y^2)^2*x*y*(x*y^2)^2*x*y*x*y^2;;
gap> G:=F/[x^2,y^3,W^2];;
gap> D:=DerivedSubgroup(G);;
gap> P:=PresentationSubgroup(G,D);;
gap> TzPrint(P);
#I  generators: [ _x1, _x2 ]
#I  relators:
#I  1.  12  [ 2, 2, 1, -2, 1, 1, 2, 2, 1, -2, 1, 1 ]
#I  2.  12  [ 2, 2, 1, 1, -2, 1, 2, 2, 1, 1, -2, 1 ]
#I  3.  14  [ 2, 2, -1, -1, -1, 2, -1, 2, 2, -1, -1, -1, 2, -1 ]
#I  4.  14  [ 2, 2, -1, 2, -1, -1, -1, 2, 2, -1, 2, -1, -1, -1 ]
#I  5.  14  [ 2, 2, -1, 2, -1, 2, 1, 2, 2, -1, 2, -1, 2, 1 ]
#I  6.  14  [ 2, 2, 1, 2, -1, 2, -1, 2, 2, 1, 2, -1, 2, -1 ]
gap> #
\end{verbatim}

This confirms that $[G,G]$ is presented on two generators $u,v$
by six defining
relators of the form $W_j^2$, $j=1,\dots,6$.  To check that the two
matrices \\
\centerline{$M=\left(\begin{array}{cc} 0 & -1\\ 1 & a\end{array}\right),~~~
N=\left(\begin{array}{cc} -z & 1 + bz + z^2\\ -1 & z+b\end{array}\right)$}

\noindent
in the proof of Theorem \ref{G7b} do indeed give a representation $\sigma$ to
$PSL(2,\C)$, we need to ensure that $W_j(M,N)$ has trace $0$ in $\Q[z]$
for each $j$.  In the MAPLE session illustrated below we verify
that $W_1(M,N)$ and $W_2(M,N)$ have the equal trace (as a polynomial in $a,b,z$) which we define to be $f(a,b,z)$.  Similarly $W_3(M,N)$ and $W_4(M,N)$ have equal trace $g(a,b,z)$, while $W_5(M,N)$ and $W_6(M,N)$ 
have equal trace $h(a,b,z)$.

\begin{center}
\includegraphics[scale=0.76]{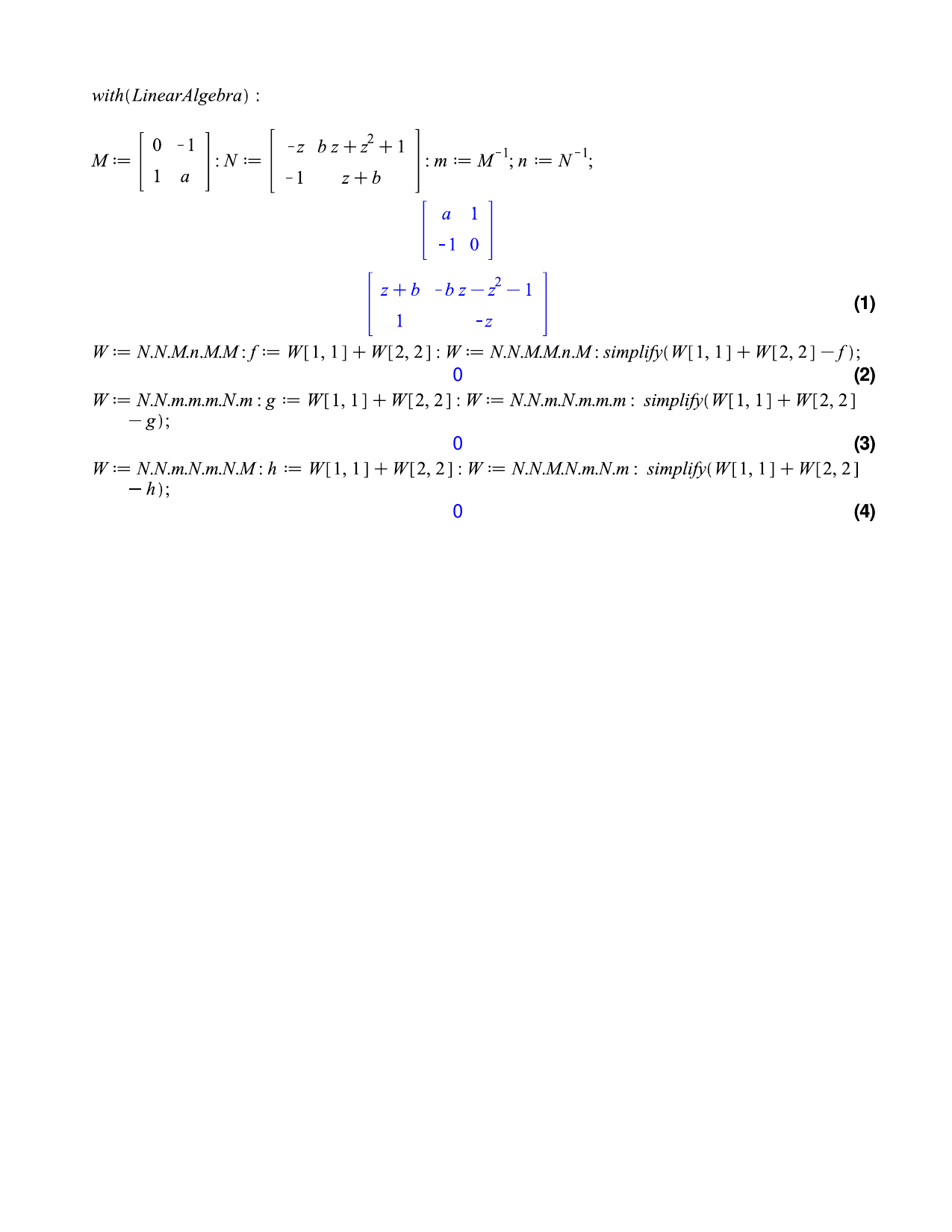}
\end{center}

We next set\\
\centerline{$p(X):=X^{12}-3X^{10}-9X^8+42X^6-48X^4+15X^2+1,$}

\noindent
and compute a Gr\"{o}bner basis $B$ for the ideal $J$ of the polynomial
ring $R:=\Q[a,b,z]$ generated by $f,g,h,p(a)$.
The first term in $B$ is a polynomial $q(z)$ of degree $48$, which we 
confirm to be irreducible.  The second term is a polynomial in $b,z$ which is linear in $b$, and the third term is linear in $a$.  Hence the quotient
ring $K:=R/J$ is an algebraic extension of $\Q$ of degree $48$, generated by $z$.  This establishes that $\sigma:u\mapsto M,~~v\mapsto N$ defines a representation from $[G,G]$ to $PSL(2,K)$.

Using a second Groebner basis calculation, we show that
the ideal of $R$ generated by $B$ together with
$d:=a^2+b^2+c^2+abc-4$ (where $c$ is the trace of $MN$) is the whole of
$R$.  This confirms that $\sigma$ is an irreducible representation.

Finally, we check that $p$ is irreducible, and that both $a,b\in K$ are roots
of $p$.  It follows that none of the eigenvalues
of $M,N$ can be an $n$'th root of unity for any $n$ with Euler totient $\phi(n)<2\cdot\mathrm{degree}(p)=24$, and hence that each of $M,N$ has order greater than $12$ in $PSL(2,K)$.
In particular, $\sigma([G,G])=\< M,N\>$ cannot be dihedral or isomorphic to one of
the finite subgroups $A_4,S_4,A_5$ of $PSL(2,\C)$.

This is sufficient to confirm that $\sigma([G,G])$ is non-elementary and so contains a non-abelian free subgroup.

\begin{center}
\includegraphics[scale=0.76]{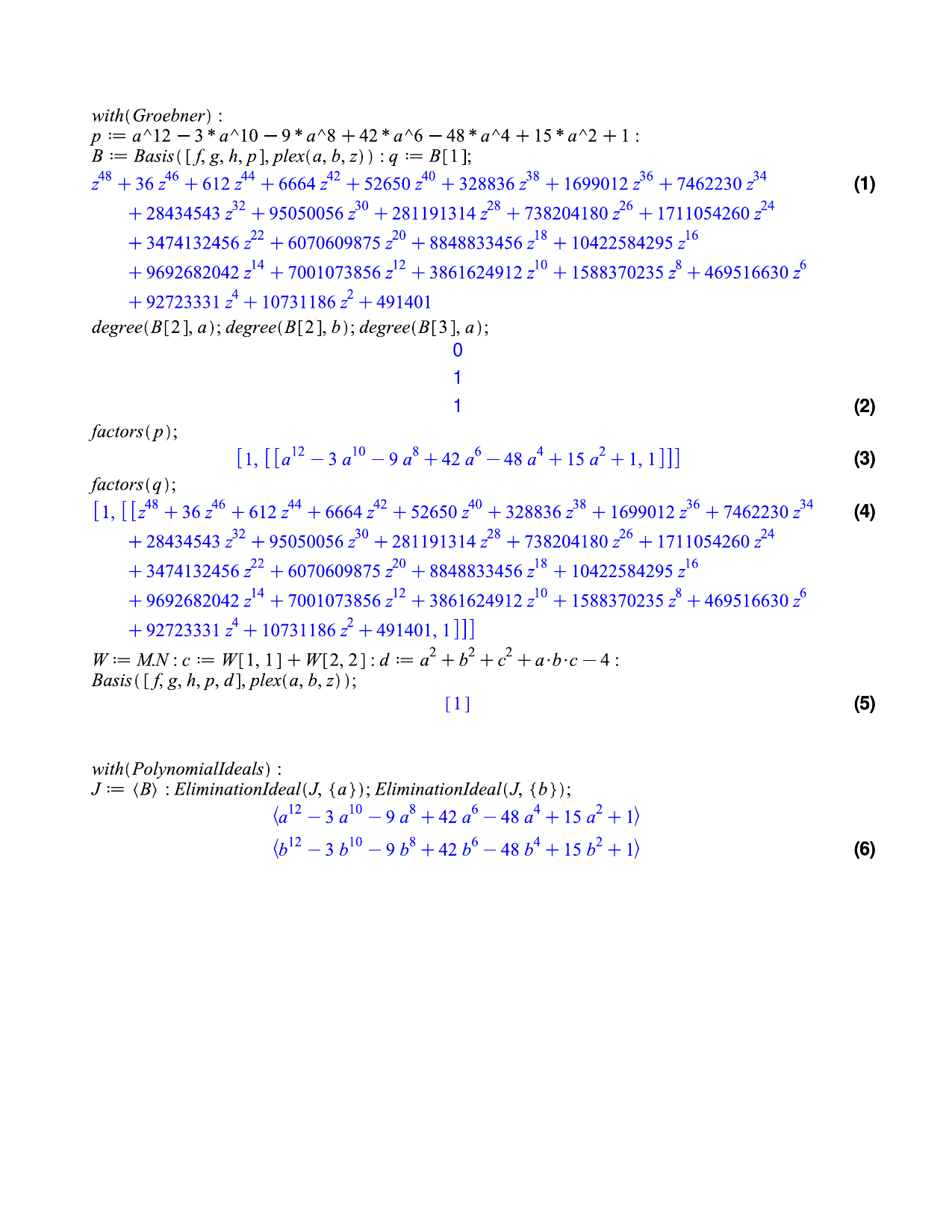}
\end{center}

\appendix{GAP calculations in the odd length case}

\subsection*{Verification that $G_{11}$ is abelian-by-finite}

\begin{verbatim}
gap> F:=FreeGroup(["x","y"]);; x:=F.1;; y:=F.2;;
gap> W:=(x*y)^2*(x*y^2)^3;;
gap> G:=F/[x^2,y^3,W^2];;
gap> H:=Subgroup(G,[(G.1*G.2)^4]);;
gap> P:=PresentationNormalClosure(G,H);
<presentation with 4 gens and 11 rels of total length 48>
gap> SimplifyPresentation(P);
#I  there are 3 generators and 3 relators of total length 14
#I  there are 3 generators and 3 relators of total length 12
gap> TzPrint(P);
#I  generators: [ _x1, _x2, _x3 ]
#I  relators:
#I  1.  4  [ 1, 2, -1, -2 ]
#I  2.  4  [ 1, 3, -1, -3 ]
#I  3.  4  [ 2, 3, -2, -3 ]
\end{verbatim}

The kernel of a representation $G_{11}\to S_4$ is free abelian of rank $3$.

\subsection*{Verification that $G_3$ is large}

\begin{verbatim}
gap> F:=FreeGroup(["x","y"]);; x:=F.1;; y:=F.2;;
gap> W:=(x*y)^3*x*y^2*x*y*(x*y^2)^2*x*y*x*y^2;;
gap> G:=F/[x^2,y^3,W^2];;
gap> H:=Subgroup(G,[(G.1*G.2)^5]);;
gap> P:=PresentationNormalClosure(G,H);
<presentation with 10 gens and 29 rels of total length 196>
gap> SimplifyPresentation(P);
#I  there are 7 generators and 24 relators of total length 236
#I  there are 7 generators and 24 relators of total length 214
gap> gg:=GeneratorsOfPresentation(P);;
gap> AddRelator(P,gg[7]);
gap> AddRelator(P,gg[4]*gg[6]^-1);
gap> for j in [1,2,3,5] do AddRelator(P,gg[1]*gg[j]); od;
gap> SimplifyPresentation(P);
#I  there are 2 generators and 1 relator of total length 2
gap> TzPrint(P);
#I  generators: [ _x1, _x5 ]
#I  relators:
#I  1.  2  [ 1, 1 ]
\end{verbatim}

The kernel of a representation $G_3\to A_5$ has the large group $\Z_2\ast\Z$ as
a homomorphic image.

\subsection*{Verification that $G_7$ is large}

\begin{verbatim}
gap> F:=FreeGroup(["a","b"]);; a:=F.1;; b:=F.2;;
gap> W:=(a*b)^4*(a*b^2)^2*a*b*a*b^2*a*b*(a*b^2)^2;;
gap> G:=F/[a^2,b^3,W^2];;
gap> H:=Subgroup(G,[(G.1*G.2)^5]);;
gap> Q:=PresentationNormalClosure(G,H);
<presentation with 11 gens and 30 rels of total length 242>
gap> SimplifyPresentation(Q);
#I  there are 7 generators and 26 relators of total length 438
#I  there are 7 generators and 26 relators of total length 432
gap> gg:=GeneratorsOfPresentation(Q);
[ _x1, _x3, _x4, _x6, _x8, _x10, _x11 ]
gap> AddRelator(Q,gg[3]*gg[6]);
gap> for j in [1,2,5,7] do AddRelator(Q,gg[j]); od;
gap> SimplifyPresentation(Q);
gap> K:=FpGroupPresentation(Q);;
gap> N:=Subgroup(K,[K.1,K.2^2]);;
gap> P:=PresentationNormalClosure(K,N);
<presentation with 3 gens and 2 rels of total length 8>
gap> hh:=GeneratorsOfPresentation(P);
[ _x1, _x2, _x3 ]
gap> AddRelator(P,hh[2]);
gap> SimplifyPresentation(P);
#I  there are 2 generators and 0 relators of total length 0
gap> # The free group presented by P is a homomorphic image
gap> # of a subgroup of index 120 in G7.  Hence G7 is large.
gap> # QED.
\end{verbatim}

\subsection*{Verification that $G_{29}$ is large}

\begin{verbatim}
gap> F:=FreeGroup(["x","y"]);; x:=F.1;; y:=F.2;;
gap> U:=x*y;; V:=x*y^2;;
gap> W:=U^4*V^4*U^2*V^3*U*V^2*U^3*V^3*U^2*V;;
gap> G:=F/[x^2,y^3,W^2];;
gap> H:=Subgroup(G,[(G.1*G.2)^5]);;
gap> P:=PresentationNormalClosure(G,H);
<presentation with 11 gens and 30 rels of total length 550>
gap> gg:=GeneratorsOfPresentation(P);;
gap> for j in [2,4,6,8,9] do AddRelator(P,gg[j]); od;
gap> SimplifyPresentation(P);
#I  there are 2 generators and 0 relators of total length 0
\end{verbatim}

The kernel of a representation $G_{29}\to A_5$ has the free group of rank $2$ as
a homomorphic image.

\subsection*{Details of proof of Lemma 4}

\begin{verbatim}
gap> #
gap> # Proof that <x,y|x^3 = y^3 = xyxy^2xy^2x^2yxyx^2yx^2y^2 = 1>
gap> # contains a nonabelian free subgroup, using the
gap> # Levai-Rosenberger-Souvignier representation
gap> #
gap> R:=UnivariatePolynomialRing(Rationals,"t");;
gap> t:=IndeterminatesOfPolynomialRing(R)[1];;
gap> a:=-3*t^4+8*t;;
gap> b:=-4*t^4+11*t;;
gap> c:=2*t^3-6;;
gap> d:=-5*t^5+14*t^2;;
gap> e:=-7*t^5+19*t^2;;
gap> f:=t^6-3*t^3+1;;
gap> x:=[[a,b,c],[0,0,1],[d,e,-a]];;
gap> y:=[[d,e,-a],[3*(b*t-d),-d,-c*t],[1,0,0]];; 
gap> # 
gap> # Representation modulo f - ie
gap> #      G' -> GL(V)=GL(3,K) with |K,Q|=6.
gap> # 
gap> # Next check the relators
gap> #
gap> x^3 mod f;
[ [ 1, 0, 0 ], [ 0, 1, 0 ], [ 0, 0, 1 ] ]
gap> y^3 mod f;
[ [ 1, 0, 0 ], [ 0, 1, 0 ], [ 0, 0, 1 ] ]
gap> x*y*x*y^2*x*y^2*x^2*y*x*y*x^2*y*x^2*y^2 mod f;
[ [ 1, 0, 0 ], [ 0, 1, 0 ], [ 0, 0, 1 ] ]
gap> Id := last;    # identity matrix
[ [ 1, 0, 0 ], [ 0, 1, 0 ], [ 0, 0, 1 ] ]
gap> m:=x*y mod f; 
[ [ -2, -1, -t^5+3*t^2 ], [ 1, 0, 0 ], [ 0, 0, 1 ] ]
gap> # Clearly det(m)=1 and tr(m)= -1
gap> # in fact m has eigenvalues +1, -1, -1:
gap> Rank(m-Id);
2
gap> Rank(m+Id);
2
gap> # and the -1 eigenspace is 1-dimensional
gap> # hence m^2 is nontrivial parabolic 
gap> # (this is the L-R-S proof that the group is infinite)
gap> # Next find eigenvectors
gap> ev1:=[[1],[1],[4*t]];;
gap> ev2:=[[1],[-1],[0]];;
gap> m*ev1 mod f;
[ [ 1 ], [ 1 ], [ 4*t ] ]
gap> m*ev2 mod f;
[ [ -1 ], [ 1 ], [ 0 ] ]
gap> # Defining equation for the span P of the eigenvectors
gap> q:=[2*t,2*t,-1];;
gap> q*ev1;
[ 0 ]
gap> q*ev2;
[ 0 ]
gap> # now construct a conjugate of m
gap> n:=y*x mod f;;
gap> # n also has eigenvalues +1.-1,-1, with 1-dimensional
gap> # (-1)-eigenspace we check that the plane P' spanned by
gap> # its eigenvectors does not coincide with the span P of the
gap> # eigenvectors of m. To do this, show that P is not
gap> # invariant under n
gap> q*n*ev1 mod f;
[ t^4+t ]
gap> q*n*ev2 mod f;
[ t^4-t ]
gap> # Thus P and P' are distinct, so intersect in a line L.
gap> # m^2 and n^2 fix L.  They act on the quotient V/L as parabolics
gap> # with distinct fixed spaces, so they generate a
gap> # non-elementary subgroup of PSL(2,C).   QED.
\end{verbatim}

\appendix{Pictures}\label{picts}

In this appendix we recall the theory of pictures over a free product of groups, and use it to prove Theorem \ref{free13}.

Suppose that $\G_1,\G_2$ are groups, and $U\in\G_1\ast\G_2$ is a cyclically reduced
word of length at least $2$. (Here and throughout this appendix, {\em length}
means length in the free product sense.)
A word $V\in\G_1\ast\G_2$ is called a {\em piece}
if there are words $V',V''$ with $V'\ne V''$, such that each of 
$V\cdot V'$, $V\cdot V''$ is cyclically reduced as written, and each is
equal to a cyclic conjugate of $U$ or of $U^{-1}$.  A cyclic subword
of $U$ is a {\em non-piece} if it is not a piece.

By a {\em one-relator product} $(\G_1\ast\G_2)/U$ of groups $\G_1,\G_2$
we mean the quotient of their free product $\G_1\ast\G_2$ by the normal
closure of a cyclically reduced word $U$ of positive length.
Recall \cite{H1} that a {\em picture} over the one-relator product
$G=(\G_1\ast\G_2)/U$ is a graph $\mathcal{P}$ on a surface $\Sigma$ (which
for our purposes will always be a disc) whose corners are labelled
by elements of $\G_1\cup\G_2$, such that
\begin{enumerate}
 \item the label around any vertex, read in clockwise order, spells out a cyclic
permutation of $U$ or $U^{-1}$;
\item the labels in any region of $\Sigma\smallsetminus\mathcal{P}$
either all belong to $\G_1$ or all belong to $\G_2$;
\item if a region has $k$ boundary components labelled by words $W_1,\dots,W_k\in\G_i$
(read in anti-clockwise order; with $i=1,2$), then the quadratic equation
$$\prod_{j=1}^k X_jW_jX_j^{-1}=1$$
is solvable for $X_1,\dots,X_k$ in $\G_i$.
(In particular, if $k=1$ then $W_1=1$ in $\G_i$).
\end{enumerate}

Note that edges of $\mathcal{P}$ may join vertices to vertices,
or vertices to the boundary $\partial\Sigma$,
or $\partial\Sigma$ to itself, or may be simple closed curves disjoint from the
rest of $\mathcal{P}$ and from $\partial\Sigma$.

Pictures may be modified using {\em bridge moves} (Figure \ref{bridgemove}),
defined as follows.  Let $\gamma$ denote an
arc in the surface $\Sigma$, that meets the picture $\mathcal{P}$ only in
its endpoints, which are interior points of arcs of $\Gamma$.  A bridge
move is the result of altering $\Gamma$ by surgery along $\gamma$.
It is allowed provided that the resulting picture satisfies the
above rules concerning labels within a region.  The only
example of this that we will use in practice is that $\gamma$ divides a simply-connected
region (say a $\G_1$-region) into two parts.  The requirement for a bridge
move is then that, in the resulting subdivision of the region-label into
two subwords, each of the two subwords represent the identity in $\G_1$.

\begin{center}
\begin{figure}
\includegraphics[scale=0.3]{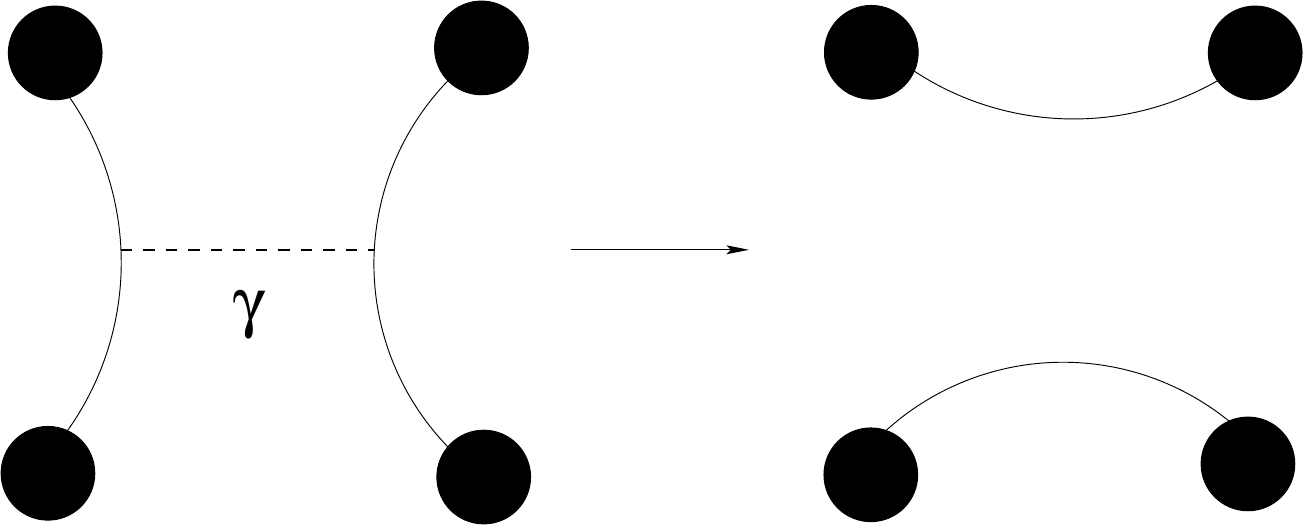}
\caption{A bridge move}
\label{bridgemove}
\end{figure}
\end{center}

The {\em boundary label} of $\mathcal{P}$ is the product of the labels around $\partial\Sigma$.
By a version of van Kampen's Lemma, there is a picture on the disc
with boundary label
$W\in\G_1\ast\G_2$ if and only if $W$ belongs to the normal closure of $U$.

A picture is {\em minimal} if it has the fewest possible vertices among
all pictures with the same (or conjugate) boundary labels.  In particular
every minimal picture is
{\em reduced}: no edge $e$ joins two distinct vertices in such a way that the labels
of these two vertices that start and finish at the endpoints of $e$
are mutually inverse.

In a reduced picture, any collection of parallel edges between two vertices
(or from one vertex to itself)
corresponds to a collection of consecutive $2$-gonal regions, and the labels within these
$2$-gonal regions spell out a piece (Figure \ref{piece}).

Since $U$ is cyclically reduced, no corner of an interior vertex is contained in a $1$-gonal region.

Recall the statement of Theorem \ref{free13}:

\medskip\noindent{\bf Theorem \ref{free13}}
{\em Let $N\gg 1$ be an integer.  Then $A:=(xy)^N$ and $B:=(xy^{-1})^N$ freely generate a free subgroup of $G$.}

\medskip
To prove the theorem, we suppose the conclusion false and derive a
contradiction.  There must be a minimal disc-picture
$\mathcal{P}_1$ with boundary label
a cyclically reduced word in $\{A,B\}$ (rewritten as a cyclically reduced word
$Z_1$ in $\Z_2*\Z_3$). 

\medskip
Note that there is very limited cancellation possible
when re-writing a cyclically reduced word in $\{A,B\}$ as
a cyclically reduced word in $\Z_2\ast\Z_3$.  In particular
the word $Z_1$  has no cyclic subword of the form
$(y(xy^2)^nxy)^{\pm 1}$ for $2\le n\le N-3$.

\medskip
To simplify our analysis of pictures, we note that we
can apply bridge-moves to make every $\Z_2$-region a $2$-gon.
We then suppress the $\Z_2$-regions, replacing each by a single arc.

\medskip
We will also make the following assumption: among all
minimal pictures with boundary label $Z(A,B)$ (and with
$\Z_2$-regions suppressed as above, we have chosen $\mathcal{P}$ to maximise the number of triangular $\Z_3$-regions.

\medskip
We will further simplify our picture by amalgamating
each maximal collection of parallel arcs into a single {\em edge} $e$.  We say that the piece $P$ defined by the collection of parallel arcs represented by $e$ at a vertex $v$ is {\em carried} by $e$ at $v$.  The corresponding piece at the other end of $e$ will be called the {\em match} of $P$.

Up to cyclic permutation in $\Z_2*\Z_3$ we can write
$$W=W_{13}=xy^{\a(0)}x\cdots xy^{\a(C)},$$
where the $\a{i}$ are indexed hexadecimally $0,1,\dots,C$ such that
$\a(0)=\a(2)=\a(3)=\a(4)=\a(5)=\a(9)=\a(A)=+1$ and
$\a(1)=\a(6)=\a(7)=\a(8)=\a(B)=\a(C)=-1$.

Without further comment we will interpret the indices modulo $13$ ($D$
in hexadecimal), and use the interval notation $[i..j]$ as shorthand for
the cyclic subword $xy^{\a(i)}x\cdots xy^{\a(j)}$ of $W$. 
We will use the bar notation $\overline{[i..j]}$ for the inverse of
$[i..j]$.  Then the product
$$[0..4]\cdot[5..9]\cdot[A..1]\cdot[2..5]\cdot[6..A]$$
of five non-pieces represents a proper subword of $W^2$.

It is also worth remarking that $[7..B]$ is a non-piece.

\begin{lemma}\label{ind5}
Let $v$ be a vertex in a reduced picture over $G$ that is not joined by arcs to the boundary. Then $v$ has index at least $5$.
If $v$ has index $5$ then the corners (in cyclic order) have labels from the
sets $\{2,3,4\}$, $\{7,8,9\}$, $\{0,1\}$, $\{5\}$, $\{A\}$.
\end{lemma}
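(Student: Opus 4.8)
The plan is to read off the local structure of an interior vertex directly from its boundary label, using the classification of the pieces of $W$. After the reductions above---each $\Z_2$-region made into a $2$-gon and suppressed, and each maximal bundle of parallel arcs amalgamated into a single edge---the label read clockwise around an interior vertex $v$ is $W^2$, and the edges meeting $v$ cut this label into the pieces they carry, separated by single exposed $\Z_3$-corners. Writing $d$ for the index of $v$, this yields a cyclic factorisation
$$W^2=P_1c_1P_2c_2\cdots P_dc_d,$$
in which each $c_j$ is a single exposed corner $y^{\pm1}$ and each $P_j$ is a piece of $W$. The first task is therefore to record the maximal pieces of $W$: a short inspection of the repeated subwords of the cyclic word $W$, together with the matches against $W^{-1}$, shows that every piece has length at most $6$, and in particular that $(xy)^4=[2..5]$, $[0..4]$, $[5..9]$, $[A..1]$, $[6..A]$ and $[7..B]$ are all non-pieces.

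For the lower bound I would use a simple counting principle. Any subword of $W^2$ containing none of the corners $c_1,\dots,c_d$ lies inside a single $P_j$, and so is itself a piece; contrapositively, every non-piece subword of $W^2$ contains at least one exposed corner. Now the five non-pieces $[0..4]$, $[5..9]$, $[A..1]$, $[2..5]$, $[6..A]$ occur consecutively and disjointly inside $W^2$ (their product is a proper subword of $W^2$), so the corners they contain are distinct. As each contains at least one, there are at least five exposed corners, that is, $d\ge 5$.

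For the equality case I would suppose $d=5$ and first note that the five corners must then lie one in each of these five disjoint non-pieces, the two leftover syllables ($B$ and $C$ of the second copy of $W$) being interior to a piece. It remains to locate the single corner inside each non-piece. The governing constraints are that each subword lying strictly between two consecutive corners is a genuine piece---hence of length at most $6$, and in particular not containing the non-piece $(xy)^4$---and that the piece straddling the two leftover syllables is likewise of length at most $6$. Feeding the explicit piece/non-piece list into these constraints eliminates every placement except those in which the corners occupy, in cyclic order, positions drawn from $\{2,3,4\}$, $\{7,8,9\}$, $\{0,1\}$, $\{5\}$ and $\{A\}$, as required.

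The clean part is the lower bound---the pigeonhole observation that five disjoint non-pieces demand five corners. I expect the equality case to be the main obstacle: narrowing each corner to the stated set is an exhaustive, though entirely elementary, case check that draws on the full piece classification. For instance, the fact that $[7..B]$ is a non-piece is exactly what forbids the otherwise length-admissible placement that would force a single edge to carry $[7..B]$, and thereby constrains the second and third corners.
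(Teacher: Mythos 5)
Your proposal is correct, and its skeleton is the paper's: the same list of non-pieces ($[0..4]$, $[5..9]$, $[A..1]$, $[2..5]$, $[6..A]$, $[7..B]$), the same factorisation of the vertex label into pieces separated by single exposed corners, and the identical pigeonhole argument for $d\ge 5$ (five disjoint non-pieces inside $W^2$, each of which must contain its own corner). The genuine difference is the organisation of the case $d=5$. The paper observes that $W^2$ contains \emph{two} occurrences of each of the five non-pieces (one per copy of $W$); since five corners can meet ten occurrences only if each corner meets exactly two, every corner must lie, modulo $13$, in the intersection of two \emph{non-adjacent} non-pieces of the decomposition, and those intersections are read off in one line as $\{0,1\}$, $\{5\}$, $\{A\}$, $\{2,3,4\}$, $\{6,7,8,9\}$, after which the non-piece $[7..B]$ removes the label $6$. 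Your gap-by-gap constraint propagation is logically equivalent (both arguments say exactly that every occurrence of a listed non-piece in the cyclic word $W^{\pm 2}$ must contain a corner), and it does close: the elimination leaves precisely two placements --- corners at $\{0,1\},\{5\},\{A\}$ in one copy of $W$ and at $\{2,3,4\},\{7,8,9\}$ in the other, or the translate of this by one copy --- and both give the stated cyclic pattern. But your writeup only \emph{asserts} that the elimination works, and that finite check is the entire content of the equality case; the paper's intersection device is what turns it from a search into a triviality. One logical slip to fix: ``every piece has length at most $6$, and in particular [the listed words] are non-pieces'' has the implication backwards --- the non-piece list is the primitive input (from which the length bound follows), and no length bound could certify the four-syllable word $(xy)^4=[2..5]$ as a non-piece.
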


\begin{pf}
This follows from the above remarks about non-pieces.  Indeed from the
decomposition of (a subword of) $W^2$ into $5$ non-pieces, each corner label
must belong to the intersection of two non-adjacent non-pieces of the
decomposition, namely $[0..4]\cap[A..1]=\{0,1\}$, $[5..9]\cap[2..5]=\{5\}$,
$[A..1]\cap[6..A]=\{A\}$, $[2..5]\cap[0..4]=\{2,3,4\}$ and
$[6..A]\cap[5..9]=\{6,7,8,9\}$.  The fact that $[7..B]$ is also a non-piece
allows us to discard $6$ as a possible corner label, giving the result.
\end{pf}

We may apply the above result to our minimal picture $\mathcal{P}$.
 But we may also
apply it to other reduced pictures; we exploit this 
in the proof of Lemma \ref{last} below
to obtain information
about boundary vertices of $\mathcal{P}$. 

\medskip
Here is one more simple result which will be useful later.

\begin{lemma}\label{no239}
If $v$ is an interior vertex of degree $5$, then no
triangular corner of $v$ can be corner $2,3$ or $9$.
\end{lemma}

\begin{pf}
By Lemma \ref{ind5}, we know that if $2$ or $3$ is a corner at $v$ in a region $R$, then $A$ is an adjacent corner at $v$.  The edge between these corners carries a piece $[B..1]$ or $[B..2]$.
In each case the unique match $\overline{[0..3]}$ or $\overline{[C..3]}$ extends to a match $\overline{[B..3]}$ for $[B..3]$.  So the other end $u$ of $e$ has the same orientation as $v$, and corner $A$ in $R$.  In particular
$R$ has two consecutive corners with opposite labels.  This is not possible if $R$ is triangular.

A similar argument applies if $v$ has a corner $9$ in $R$. There is an adjacent corner $0$ or $1$ at $v$, the edge between these corners carries $[A..C]$ or $[A..0]$ at $v$.
Each of the possible matches, namely $[5..7]$,
$\ol{[4..6]}$ and $\ol{[9..B]}$ for $[A..C]$ or  $\ol{[8..B]}$ for $[A..0]$, extends to a match for $[9..C]$, or $[9..0]$ so
 the labels of the corners of $R$ at $u,v$ cancel, and hence $R$ cannot be a triangular region.
\end{pf}

\subappendix{Angles and Curvature}

Assign angles $\theta(c)$ to corners $c$ of $\mathcal{P}$ as follows.

\begin{enumerate}
\item A corner of a boundary region between a boundary edge and a non-boundary edge has angle $\pi/2$.
\item A corner of a boundary region between two non-boundary edges has angle $\pi$.
\item A corner between a boundary edge and $\partial D^2$ has angle $\pi/2$.
\item A corner of a non-simply connected region has angle $\pi$.
\item If an interior vertex of degree $5$ has precisely one
non-triangular corner, then each such corner has angle $2\pi/3$.
\item If an interior vertex of degree $5$ has two or more
non-triangular corners, then each such corner has angle $\pi/2$.
\item All other corners have angle $\pi/3$.
\end{enumerate}

The angle assignments lead to a measure of curvature for vertices
and regions of $\mathcal{P}$, as follows.

\begin{enumerate}
\item The curvature $\kappa(v)$ of a vertex $v$ is $2\pi$ less the sum of the
angles of corners at $v$.
\item The curvature $\kappa(R)$ of an interior region $R$ is
$2\pi\chi(R)+\sum_c (\theta(c)-\pi)$, where $\chi$ denotes Euler characteristic, and the sum is over all corners $c$ in $R$.
\item The curvature $\kappa(R)$ of a boundary region $R$ is
$\pi\chi(R)+\sum_c (\theta(c)-\pi)$, where $\chi$ denotes Euler characteristic, and the sum is over all corners $c$ in $R$.
\end{enumerate}

It is an immediate consequence of Euler's formula that the total
curvature, summed over all the vertices and regions of $\mathcal{P}$,
is $+2\pi$.

\begin{theorem}\label{posvertex}
Let $v$ be an interior vertex of positive $\kappa$-curvature: $\kappa(v)>0$.
Then the sequence of non-$2$-gonal
corner labels (in cyclic order) around $v$ is
 $5,A,4,7,1$.
Moreover the pieces $[2..4]$, $[6..9]$ at $v$ are matched to
$[3..5]$ and $\overline{[1..4]}$ respectively at the corresponding neighbouring vertices.
\end{theorem}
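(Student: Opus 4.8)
The plan is to combine the curvature bookkeeping with the combinatorial restrictions of Lemmas \ref{ind5} and \ref{no239}, and then to pin down the exact configuration by a matching analysis in the style of Lemma \ref{no239}.

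First I would use the angle assignments to bound the degree of $v$. For an interior vertex of degree $d\ge 6$ every corner receives angle $\pi/3$ (rules (5) and (6) apply only to degree $5$), so $\kappa(v)=2\pi-d\pi/3\le 0$. For $d=5$ a short enumeration over the number of non-triangular corners gives $\kappa(v)=\pi/3>0$ when all five corners are triangular, and $\kappa(v)\le 0$ otherwise: one non-triangular corner of angle $2\pi/3$, or two or more of angle $\pi/2$, each already force $\sum_c\theta(c)\ge 2\pi$. Since Lemma \ref{ind5} gives $d\ge 5$, the hypothesis $\kappa(v)>0$ forces $d=5$ with all five corners triangular.

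Next I would read off the coarse form of the corner sequence. By Lemma \ref{ind5} the cyclic sequence of labels is drawn from $\{2,3,4\},\{7,8,9\},\{0,1\},\{5\},\{A\}$ in that cyclic order, and since every corner is triangular, Lemma \ref{no239} eliminates the labels $2,3$ and $9$. Thus the sequence is $4,\,a_2,\,a_3,\,5,\,A$ with $a_2\in\{7,8\}$ and $a_3\in\{0,1\}$. At this point I would record the key fact underlying the rest of the argument: a triangular $\Z_3$-region has all three of its corner labels of the same sign, since the product of the three $y^{\pm1}$ labels must be trivial in $\Z_3$. Because blocks $4,5,A$ carry sign $+$, block $a_2$ carries sign $-$, and $a_3$ carries sign $+$ or $-$ according as $a_3=0$ or $a_3=1$, this controls which corners of the neighbouring vertices may lie in the triangular regions meeting $v$.

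The heart of the proof is then to determine the two remaining labels and the two matches by examining the pieces on the edges at $v$ and locating their matches. The edges joining corner $5$ to corner $A$ and corner $A$ to corner $4$ carry the pieces $[6..9]$ and $[B..3]$ irrespective of $a_2,a_3$. I would analyse $[6..9]$ first: its sign pattern $-,-,-,+$ occurs in $W$ only at blocks $6,7,8,9$, so the only same-orientation match is the forbidden self-match, and reducedness forces the match to the unique opposite-orientation occurrence $\overline{[1..4]}$ (whose effective pattern $-,-,-,+$ also appears exactly once, at blocks $1,2,3,4$). The pieces carried by the remaining edges $4\to a_2$, $a_2\to a_3$, $a_3\to 5$ depend on $a_2,a_3$; for each choice I would locate the forced match and check the sign of the neighbouring corner it inserts into the triangular region at $v$. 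The choices $a_2=8$ and $a_3=0$ each place a corner of the wrong sign into such a region, exactly as in Lemma \ref{no239}, and are thereby excluded. This pins the sequence down to $4,7,1,5,A$, i.e.\ cyclically $5,A,4,7,1$, and shows that the edge from corner $1$ to corner $5$ carries $[2..4]$; after excluding the self-match by reducedness and the inverse occurrence $\overline{[6..8]}$ by incompatibility of its extension with the sign of corner $5$, its only admissible match is the neighbouring occurrence $[3..5]$ of $(xy)^3$ inside the block $(xy)^4$.

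The step I expect to be the main obstacle is this last matching analysis: one must enumerate carefully where each relevant piece and its inverse occur as cyclic subwords of $W^{\pm2}$, verify in each case that the forced match extends consistently, and repeatedly invoke reducedness together with the same-sign property of triangular regions to discard the spurious alternatives. The curvature computation and the appeals to Lemmas \ref{ind5} and \ref{no239} are routine by comparison.
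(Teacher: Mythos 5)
Your proposal follows essentially the same route as the paper: curvature bookkeeping forces degree $5$ with all five corners triangular, Lemmas \ref{ind5} and \ref{no239} give the coarse form $4,a_2,a_3,5,A$ with $a_2\in\{7,8\}$, $a_3\in\{0,1\}$, the all-corners-equal property of triangular $\Z_3$-regions is the engine, and the remaining ambiguities and the two matches are settled by enumerating occurrences of sign patterns and checking whether forced extensions of matches run over triangular corners. This is exactly the paper's argument (the paper excludes $a_2=8$ via the edge $[5..7]$, whose three possible matches $[A..C]$, $\overline{[4..6]}$, $\overline{[9..B]}$ all extend over the triangular corner $4$, and then excludes $a_3=0$ via the edge $[8..C]$, whose unique match $\overline{[9..0]}$ extends over the corner $0$).

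One detail in your sketch is misstated, however: the exclusion of $\overline{[6..8]}$ as a match for $[2..4]$. You attribute it to ``incompatibility of its extension with the sign of corner $5$,'' but the failure of a match to extend toward corner $5$ is no obstruction at all --- a maximal match is \emph{supposed} not to extend. The real obstruction, and the paper's reason, lies at the other end: since $\a(1)=-1$ and $\a(9)=+1$ pair to the identity, $\overline{[6..8]}$ \emph{does} extend to a match $\overline{[6..9]}$ for $[1..4]$, which places a $y$-labelled corner (position $9$ of the neighbour) adjacent to the $y^{-1}$-labelled corner $1$ of $v$ inside the triangular region at corner $1$ --- impossible, since a triangle's corners must all carry the same label. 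Your own stated methodology (``check the sign of the neighbouring corner it inserts into the triangular region'') yields this correction automatically, so the plan is sound; but as written the justification points at the wrong corner. A further, purely cosmetic, discrepancy: your ``same-orientation''/``opposite-orientation'' labels are swapped relative to the paper's convention, in which the barred matches $\overline{[\,\cdot\,]}$ are the ones where the neighbouring vertex has the \emph{same} orientation (and the forbidden self-match, excluded by reducedness, is the opposite-orientation one); this does not affect the conclusions.
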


\begin{pf}
By Lemmas \ref{ind5} and \ref{no239} the vertex $v$ must have index $5$ and has three consecutive corners $5,A,4$,
with the fourth and fifth corners being respectively $7$ or $8$ and $0$ or $1$.  Since $\kappa(v)>0$
it follows that all five of the corners incident at $v$ belong to
triangular regions.  In a triangular region the corner labels are all
equal (to $y$ or to $y^{-1}$).  This allows us to eliminate most of the
possible combinations of corner labels given in Lemma \ref{ind5}.

If the fourth corner of $v$ is $8$, then the edge between the
third and fourth corners of $v$ carries $[5..7]$.
Each possible match $[A..C]$, $\ol{[4..6]}$, $\ol{[9..B]}$ extends
to a match for $[4..7]$, so the third corner cannot be triangular. This contradiction shows that the fourth corner of $v$ must be $7$.

\medskip
A similar argument applies if the fifth corner is $0$.
The edge between the fourth and fifth corners carries
$[8..C]$.  The unique match $\ol{[9..0]}$ for this extends
to a match $\ol{[8..0]}$ for $[8..0]$, so the fifth
corner is not triangular.

This shows that the corners of $v$ are $5,A,4,7,1$, as claimed.

\medskip
We must now show that the matches for the pieces $[2..4]$ and
$[6..9]$ are as claimed.

Consider the piece $[2..4]$, represented by the group of arcs between the
fifth and first corners. The two possible matches for it are $[3..5]$ and
$\ol{[6..8]}$.  But the latter extends to a match $\ol{[6..9]}$ for $[1..4]$.
Since the fifth corner is triangular, the match must be $[3..5]$.

The piece $[6..9]$, represented by the group of arcs between
the first and second corners, has only one possible match, namely $\ol{[1..4]}$.

\end{pf}

 We next show that positive curvature does not arise in regions.
 
\begin{prop}
Let $\Delta$ be a region.  Then $\kappa(\Delta)\le0$.
\end{prop}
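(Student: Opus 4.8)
The plan is to bound the curvature of every region $\Delta$ from above by $0$, treating the various types of region (interior simply-connected, non-simply-connected, and boundary) separately. By the curvature formulas, a simply-connected interior region $\Delta$ has $\kappa(\Delta)=2\pi+\sum_c(\theta(c)-\pi)$, so $\kappa(\Delta)\le 0$ is equivalent to $\sum_c(\pi-\theta(c))\ge 2\pi$. For a triangular region, all three corners have angle $\pi/3$ (rule 7), giving $\kappa(\Delta)=2\pi+3\cdot(\pi/3-\pi)=2\pi-2\pi=0$, so triangles are the extremal case and cause no difficulty. For non-simply-connected interior regions, rule 4 assigns every corner angle $\pi$, so $\sum_c(\theta(c)-\pi)=0$ and $\kappa(\Delta)=2\pi\chi(\Delta)\le 0$ since $\chi(\Delta)\le 0$ for a region that is not a disc. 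Boundary regions are handled analogously using the formula $\kappa(\Delta)=\pi\chi(\Delta)+\sum_c(\theta(c)-\pi)$, and one checks that the angle assignments in rules 1--3 make the contribution along the boundary non-positive.

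The main case, and the one where the real work lies, is therefore the simply-connected \emph{non-triangular} interior regions, those with at least four sides. Here I would argue that each corner contributes an angle-deficit $\pi-\theta(c)\ge \pi/2$ \emph{on average}, so that a region with $k\ge 4$ corners accumulates total deficit at least $2\pi$. The delicate point is that Theorem \ref{posvertex} and the angle-assignment rules 5--6 allow some corners to have angle as large as $2\pi/3$ (deficit only $\pi/3$), namely the unique non-triangular corner at a degree-$5$ vertex of positive curvature. First I would observe that a $k$-gon with $k\ge 5$ is automatically fine, since even if every corner had the maximum angle $2\pi/3$ we would get $\sum_c(\pi-\theta(c))\ge 5\cdot(\pi/3)=5\pi/3$, which is not quite $2\pi$ --- so a naive count fails and a finer analysis is needed.

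The hard part will be the \textbf{quadrilateral case} ($k=4$), where I must rule out configurations whose total corner-angle exceeds $2\pi$. The key structural input is Theorem \ref{posvertex}: a corner can carry the large angle $2\pi/3$ only when it is the sole non-triangular corner of a positively-curved degree-$5$ vertex, and in that case its two neighbouring pieces are forced to be $[2..4]$ (matched to $[3..5]$) and $[6..9]$ (matched to $\overline{[1..4]}$) with corner sequence $5,A,4,7,1$. I would exploit these rigid matching constraints, together with Lemmas \ref{ind5} and \ref{no239}, to show that a quadrilateral region cannot have too many corners drawn from the high-angle positions: the piece structure along the boundary of $\Delta$ forces incompatible labels on adjacent sides, much as in the proofs of Lemmas \ref{no239} and \ref{posvertex}, where a match that extends to a longer match forces two consecutive corners of opposite orientation and hence rules out a triangular (or here, small) region. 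Concretely, I expect to show that a quadrilateral must contain at least two corners of angle $\le \pi/2$ --- or at least enough angle-deficit summing to $2\pi$ --- by checking that the corner labels permitted by Lemma \ref{ind5} cannot all simultaneously occupy the high-angle slots around a $4$-gon given the forced matchings. This combinatorial case-check over the admissible corner-label sequences $\{2,3,4\},\{7,8,9\},\{0,1\},\{5\},\{A\}$ is the substance of the proof, and I would organise it by recording, for each side of $\Delta$, which piece is carried and to what it is matched, then deriving a contradiction with $\kappa(\Delta)>0$.
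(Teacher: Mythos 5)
There is a genuine gap, and it occurs exactly where your own arithmetic flags it. You correctly observe that for a pentagonal interior region the naive deficit count gives only $5\cdot\pi/3=5\pi/3<2\pi$, so ``a finer analysis is needed'' --- but you then move straight to the quadrilateral case and never supply that analysis. Pentagons are not automatically fine, and the paper cannot dispose of them by label-matching alone: it uses a global device that your proposal never invokes, namely that $\mathcal{P}$ was chosen, among all minimal pictures with the given boundary label, to \emph{maximise the number of triangular regions}. In a pentagonal interior region the corner labels are four $y$'s and one $y^2$ (or vice versa), so a bridge move is available; a positively curved pentagon would have all five vertices of degree $5$, every edge between such vertices carries at least two arcs, and the bridge move then replaces the pentagon by a triangle plus a quadrilateral without disturbing the rest of the picture --- contradicting maximality. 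Without this (or some substitute), your proof has a hole at $k=5$.

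The same omission weakens your quadrilateral plan. The paper's Cases 1--3 and 5 do proceed by the kind of forced-matching contradictions you describe, but Cases 4, 6 and the final configuration (opposite corners both labelled $8$) cannot be killed directly: the paper performs a bridge move across $\Delta$, which preserves all vertex degrees (hence preserves $\kappa(\Delta)$) and converts the configuration into one already handled. A pure case-check over admissible corner labels, with no bridge moves and no maximality assumption, is not known to close these cases. Finally, a smaller but real misreading: the angle $2\pi/3$ is assigned to the unique non-triangular corner of a degree-$5$ interior vertex, and such a vertex has $\kappa=0$, not $\kappa>0$; Theorem \ref{posvertex} describes the all-triangular degree-$5$ vertices, so its rigid corner sequence $5,A,4,7,1$ and forced matchings do \emph{not} apply to the vertices carrying the high-angle corners. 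The case analysis must instead be organised directly around the label sets of Lemma \ref{ind5} (which label the degree-$5$ vertex contributes to $\Delta$), as the paper does, together with Lemma \ref{no239}. You would also need to handle the two degenerate boundary-region cases (the empty picture, and a region cut off by a single edge), which require minimality of $\mathcal{P}$ rather than only the angle rules.
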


\begin{pf}
Since no angle is greater than $\pi$, the result follows immediately for regions of non-positive Euler characteristic,
so it suffices to consider regions which are topological discs.

If $\Delta$ is a boundary region which is a disc, then it has at least $4$ corners with angle $\pi/2$ (and hence $\kappa(\Delta)\le0$), except in one of the following two possible cases.

\begin{enumerate}
\item $\Delta=D^2$.  In this case $\mathcal{P}$ is empty, and $Z$ is the empty word, contrary to hypothesis.
\item $\partial\Delta$ consists of a single edge $e$
together with a single arc $\gamma\subset\partial D^2$.  In this case the label on $\gamma$ is trivial, so $e$ and $\Delta$ can be removed from $\mathcal{P}$, contrary to the assumption of minimality.
\end{enumerate}

By definition, if $\Delta$ is a triangle, then $\theta(c)=\pi/3$ for every corner $c$
of $\Delta$, and so $\kappa(\Delta)=0$. 

 Hence we may assume that $\Delta$ is an interior $k$-gonal region for some $k\ge 4$.  By definition
$\theta(c)\le 2\pi/3$ for each corner $c$ of
$\Delta$, and so if $k\ge 6$ then $\kappa(\Delta)\le0$.  Hence we are reduced
to the case where $k\in\{4,5\}$.

In any $5$-gonal interior region, the labels on the corners must consist of $4$ labels $y$ and one $y^2$ (or {\em vice versa}),
and so a bridge-move is possible across such a region (in either of two ways.

A $5$-gon with $\kappa>0$ must have all its vertices of degree $5$.  However, any edge joining interior vertices
of degree $5$ represents two or more arcs.  So a bridge-move
across such a region creates in its place a triangular region and a $4$-gonal region, leaving the rest of the picture unchanged.

It follows from our assumption 
of maximality of the number of triangles that no
pentagonal region has $\kappa>0$.  Thus we are reduced to the case $k=4$.  In particular, precisely two of the corner labels in $\Delta$ are $y$ and two are $y^2$.

Note also that $\kappa(\Delta)\le0$ unless
$\Delta$ has at least three corners belonging to vertices of degree $5$.  By Lemma \ref{ind5} the corner of
any such vertex in $\Delta$ has label one of
$0,1,2,3,4,5,7,8,9,A$.

We complete the proof by considering separate cases.

\medskip\noindent{\bf Case 1. $\Delta$ has a vertex
of degree $5$ with corner $5$ in $\Delta$.}

\medskip
We assume without loss of generality that this
vertex $v$ has positive orientation.

The anti-clockwise edge of $\Delta$ at $v$ carries $[6..9]$ which has a unique match $\overline{[1..4]}$,
so the neighbouring vertex at the other end of this edge has
positive orientation and corner $5$ (hence label $y$) in
$\Delta$.

The clockwise edge from $v$ in $\Delta$ carries
$[n..4]$ with $n\in\{1,2\}$. The possible matches
are $\overline{[6..9]}$ or $\overline{[6..8]}$ (if the next vertex is
positively oriented); and $[2..5]$ (if it is negatively oriented).  In either case the next vertex has corner in
$\Delta$ labelled $y$ (positively oriented corner $0$ or negatively oriented corner $6$).  But then the $4$-gon $\Delta$
has at least three corners labelled $y$, which is impossible (Figure \ref{c1}).

This contradiction shows that the result holds whenever $\Delta$ has a vertex of degree $5$ with $5$-corner in $\Delta$. 

\begin{center}
\begin{figure}
\includegraphics[scale=0.4]{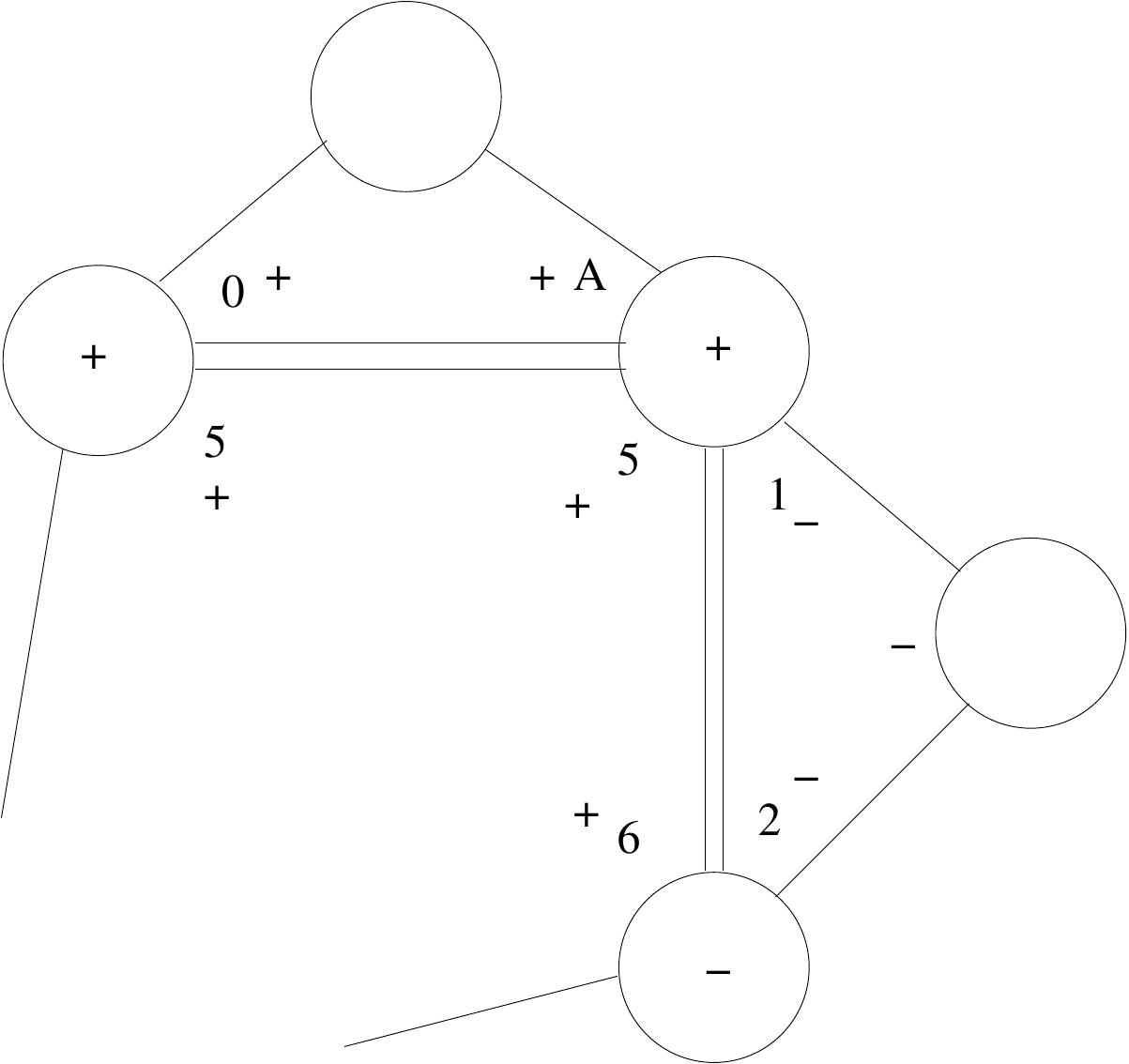}
\caption{Case 1}
\label{c1}
\end{figure}
\end{center}

\medskip\noindent{\bf Case 2. $\Delta$ has a vertex
of degree $5$ with corner $A$ in $\Delta$.}

\medskip
As in Case 1, we assume without loss of generality that this
vertex $v$ has positive orientation.

The clockwise edge of $\Delta$ at $v$ carries $[6..9]$ which has a unique match $\overline{[1..4]}$,
so the neighbouring vertex at the other end of this edge has
positive orientation and corner $0$ (hence label $y$) in
$\Delta$.

The anti-clockwise edge from $v$ in $\Delta$ carries
$[B..n]$ with $n\in\{1,2,3\}$.  In each case there is a unique match $\overline{[(1-n)..3]}$. Hence
the neighbouring vertex at the other end of this edge has
positive orientation and corner $4$ (hence label $y$) in
$\Delta$.  Thus $\Delta$ has three corners labelled $y$, a contradiction (Figure \ref{c2}).

This contradiction shows that the result holds whenever $\Delta$ has a vertex of degree $5$ with $A$-corner in $\Delta$. 

\begin{center}
\begin{figure}
\includegraphics[scale=0.4]{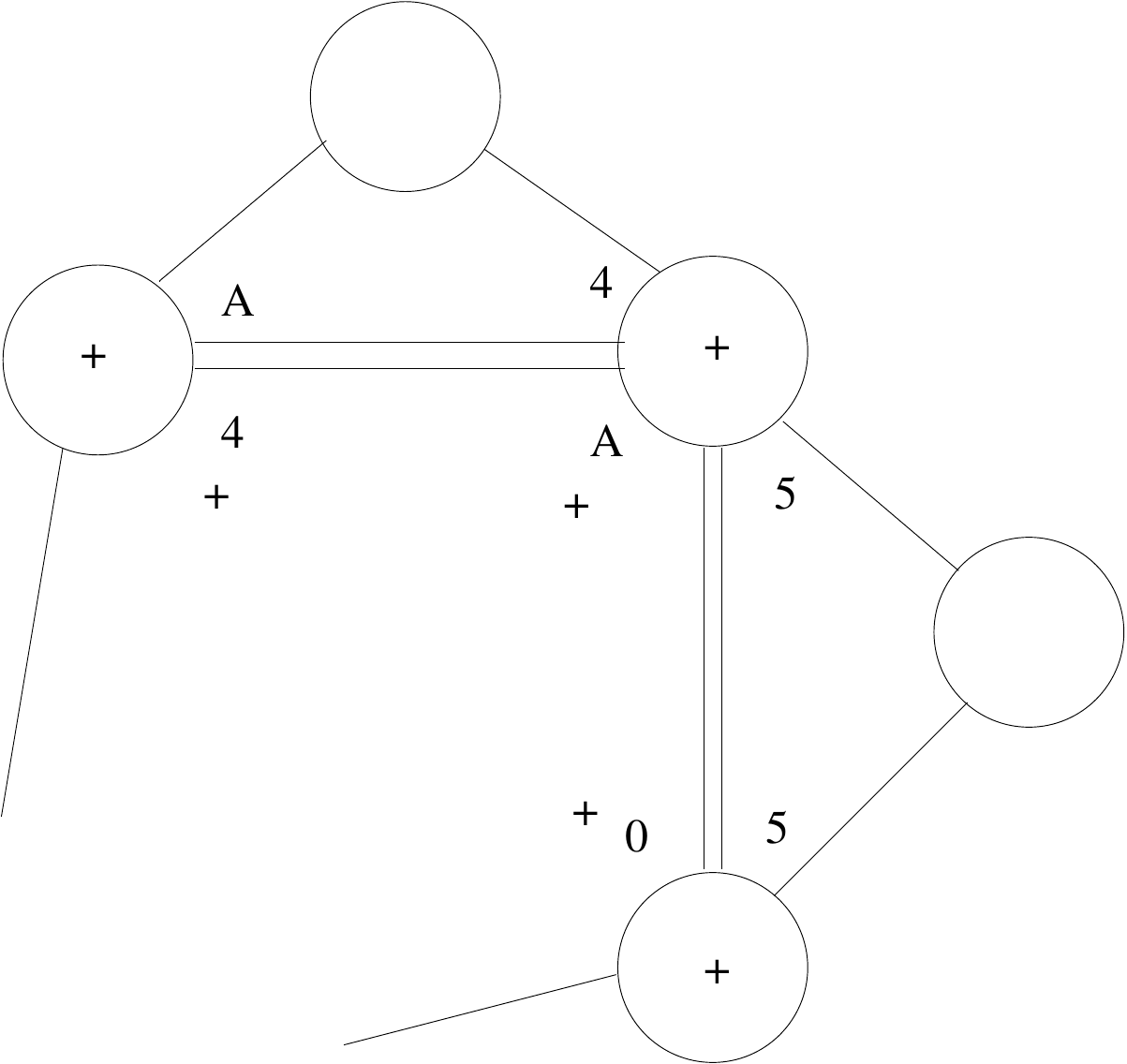}
\caption{Case 2}
\label{c2}
\end{figure}
\end{center}

\medskip\noindent{\bf Case 3. $\Delta$ has a vertex
of degree $5$ with corner $0$ in $\Delta$.}

\medskip
As in Case 1, we assume without loss of generality that this
vertex $v$ has positive orientation.

The anti-clockwise edge of $\Delta$ at $v$ carries $[1..4]$ which has a unique match $\overline{[6..9]}$,
so the neighbouring vertex $v_0$ at the other end of this edge has
positive orientation and corner $A$ (hence label $y$) in
$\Delta$.  By Case 2, we may assume that this vertex has degree $>5$, so the
result follows unless the remaining two vertices
of $\Delta$ have degree $5$ and corner label $y^2$
in $\Delta$.

The clockwise edge of $\Delta$ at $v$ carries $[n..C]$
for $n\in\{8,9,A\}$. Since $v$ and its neighbour $v_1$ have
opposite corner labels in $\Delta$, the match must extend
to a match for $[n..0]$.  The only possibility is
$\overline{[9..(8-n)]}$, so the vertex is positively oriented
with corner $8$ in $\Delta$ (Figure \ref{c3}).

\begin{center}
\begin{figure}
\includegraphics[scale=0.4]{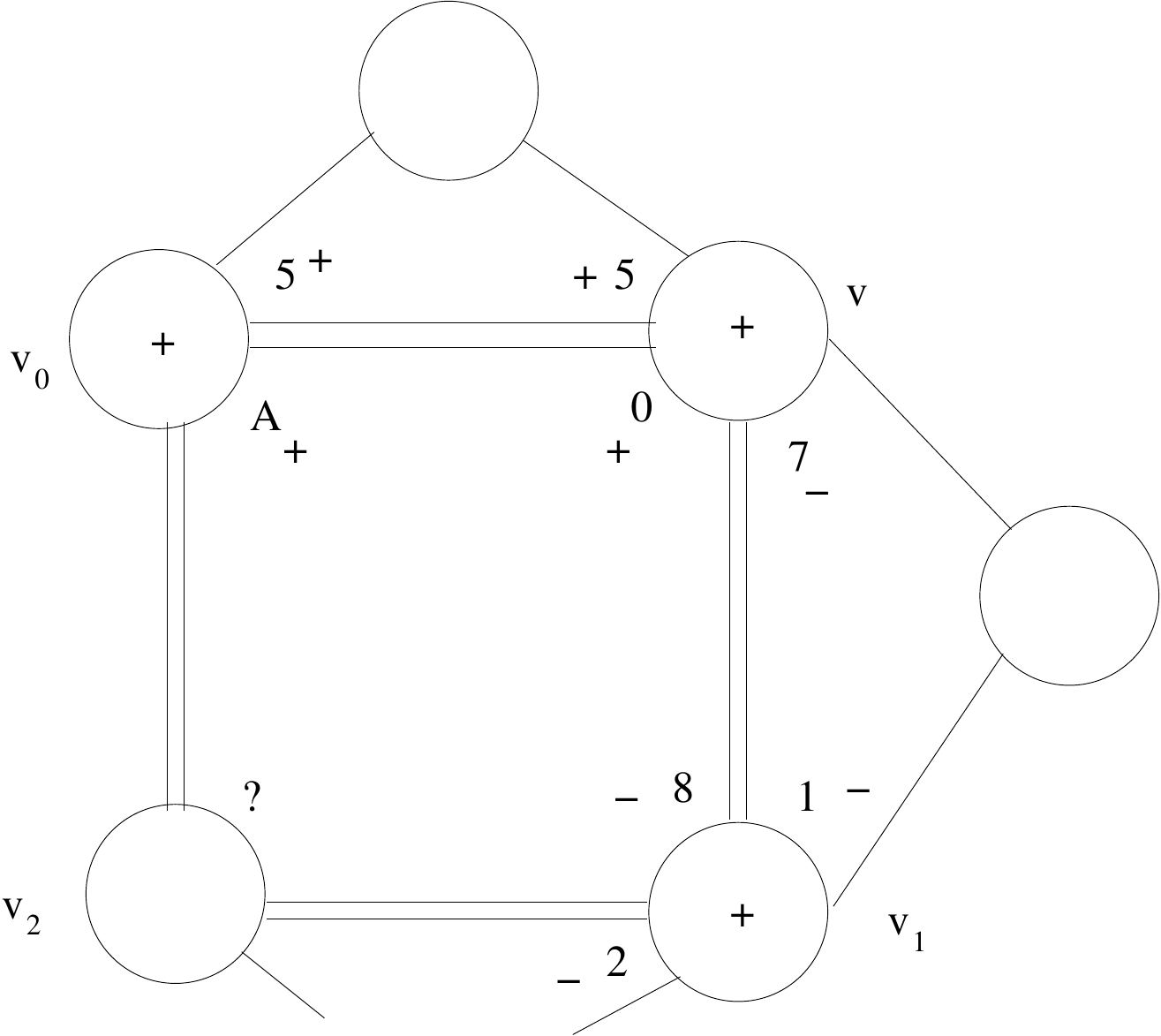}
\caption{Case 3}
\label{c3}
\end{figure}
\end{center}

Now consider the edge of $\Delta$ connecting $v_1$ to the fourth vertex $v_2$ of $\Delta$.  This edge carries
$[n..7]$ at $v_1$, for $n\in\{3,4,5\}$.  The match at $v_2$
cannot extend to a match for $[n..8]$, since the corners 
in $\Delta$ of $v_1$ and $v_2$ have the same label.  The only
possibilities are $[(n+5)..C]$ and $[\overline{[9..(4-n)]}$.

Thus $v_2$ is either positively oriented with corner $8$ in 
$\Delta$, or negatively oriented with corner $0$ in $\Delta$.
In the first case, we may apply the same argument to the
edge joining $v_2$ to $v_0$ and deduce that $v_0$ has corner $0$ or $8$ in $\Delta$.  But we have already seen that $v_0$ has corner $A$ in $\Delta$, a contradiction.  In the second case, we may apply the whole analysis to the corner $v_2$
rather than $v$, and deduce that $v_0$ and $v_1$ are negatively oriented.  Again this is a contradiction.

This contradiction shows that the result holds whenever $\Delta$ has a vertex of degree $5$ with $5$-corner in $\Delta$. 

\medskip\noindent{\bf Case 4. $\Delta$ has a vertex
of degree $5$ with corner $1$ in $\Delta$.}

\medskip
As in Case 1, we assume without loss of generality that this
vertex $v$ has positive orientation.

The clockwise edge of $\Delta$ at $v$ carries $[n..0]$ with
$n\in\{8,9,A\}$.  There is a unique match $\overline{[8..(8-n)]}$. Hence the neighbouring vertex $v_1$ at the other end of this edge has
positive orientation and corner $7$ in
$\Delta$. 
In particular the corners of $v,v_1$ in $\Delta$ both have
the same label $y^2$.  Hence we may assume that the other
two corner labels in $\Delta$ are $y$.

If any edge of $\Delta$ consists only of a single arc, then
the vertices at either end of this edge each have degree $>5$, and so
$\kappa(\Delta)\le0$.  Hence we may assume that each edge of
$\Delta$ consists of two or more arcs.  Now carry out a bridge move across $\Delta$, replacing $\Delta$ by a
$4$-gonal region $\Delta'$ with corner $0$ at $v$ (Figure \ref{b4}).  This bridge move has not changed the degree of any vertex of $\Delta$, so $\kappa(\Delta)=\kappa(\Delta')$.  But
$\kappa(\Delta')\le0$ by Case 3.

\begin{center}
\begin{figure}
\includegraphics[scale=0.35]{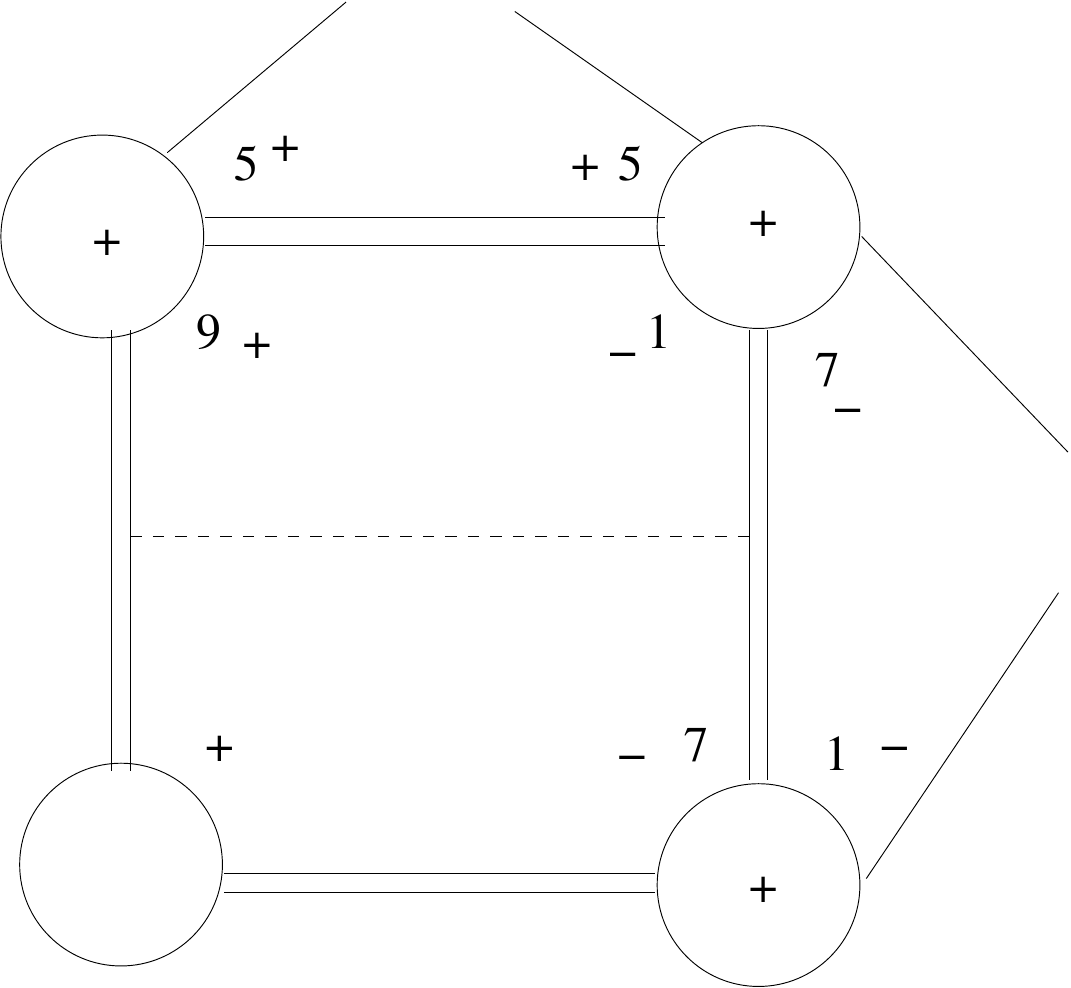}
\qquad \qquad
\includegraphics[scale=0.35]{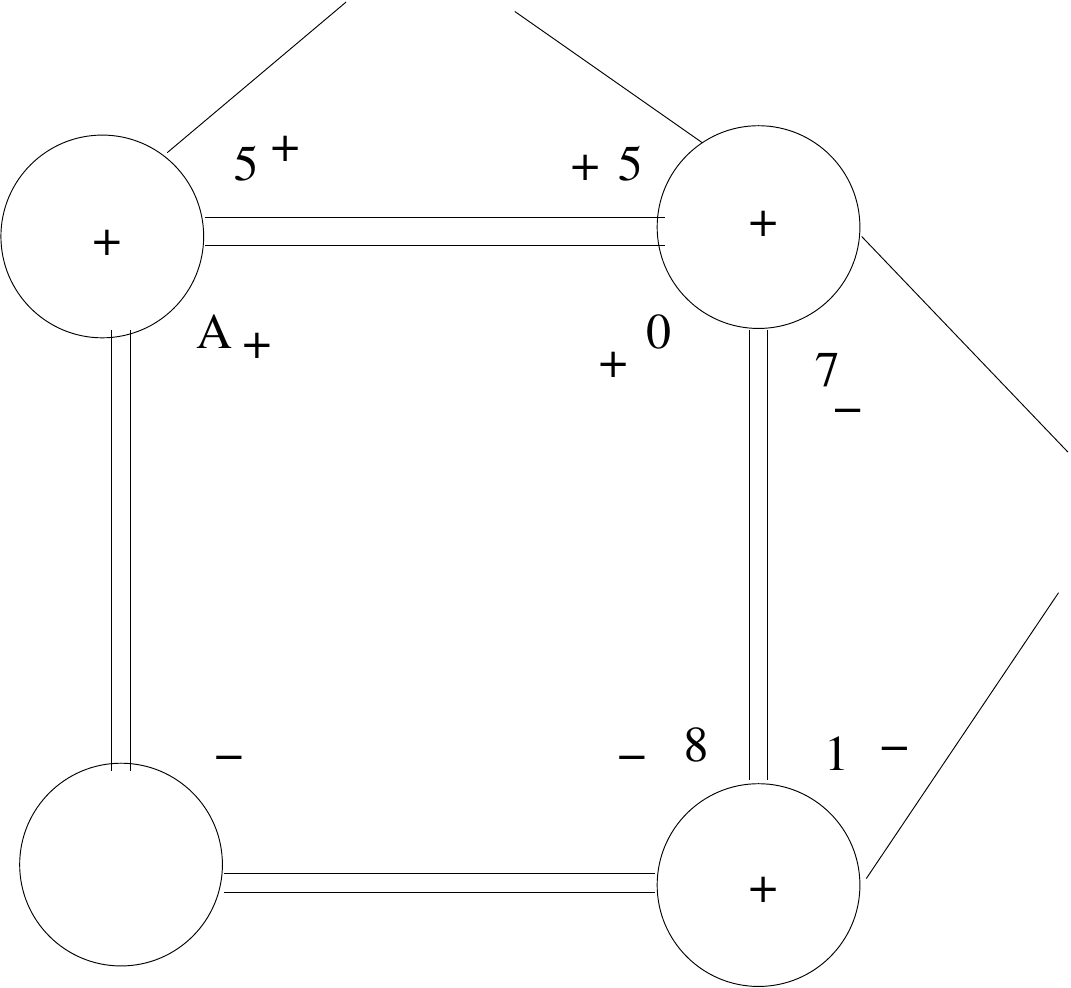}
\caption{Bridge move in Case 4}
\label{b4}
\end{figure}
\end{center}

\medskip\noindent{\bf Case 5. $\Delta$ has a vertex
of degree $5$ with corner $4$ in $\Delta$.}

\medskip
As in Case 1, we assume without loss of generality that this
vertex $v$ has positive orientation.

The clockwise edge of $\Delta$ at $v$ carries $[B..3]$.  There is a unique match $\overline{[B..3]}$. Hence the neighbouring vertex $v_1$ at the other end of this edge has
positive orientation and corner $A$ in
$\Delta$.   In particular we may assume by Case 2 that
$v_1$ has degree $>5$.  Moreover, the corners of $\Delta$
at $v,v_1$ are both labelled $y$, so we may assume that the other two corners are
of degree 5 and labelled $y^2$.

The anti-clockwise edge of $\Delta$ at $v$ carries $[5..n]$
with $n\in\{6,7,8\}$. We may assume that this edge separates $\Delta$ from a triangular region, for otherwise each
of its incident vertices $v,v_0$ has at least $2$ non-triangular corners.  This, together with the fact that $v_1$
has degree $>5$, would imply that $\kappa(\Delta)\le0$.

Since the labels of the corners of $\Delta$ at $v,v_0$ are opposite, the match for $[5..n]$ at $v_0$ must extend to a match for $[4..n]$, but not to a match for $[5..(n+1)]$.  There
are precisely three possible matches: $[A..C]$ or $\overline{[9..B]}$ with $n=7$, and $\overline{[3..6]}$ with $n=8$.  Of these, we may rule out $\overline{[9..B]}$, since it would mean that $v_0$ had a corner $C$ and so degree at least $6$.  Since neither $2$ nor $9$ can be a corner of a triangular region at an interior vertex of degree $5$, we are
left only with the possible match $[A..C]$.  Thus $v_0$ is
negatively oriented, with corner $9$ in $\Delta$, and label $y^2$.

\begin{center}
\begin{figure}
\includegraphics[scale=0.4]{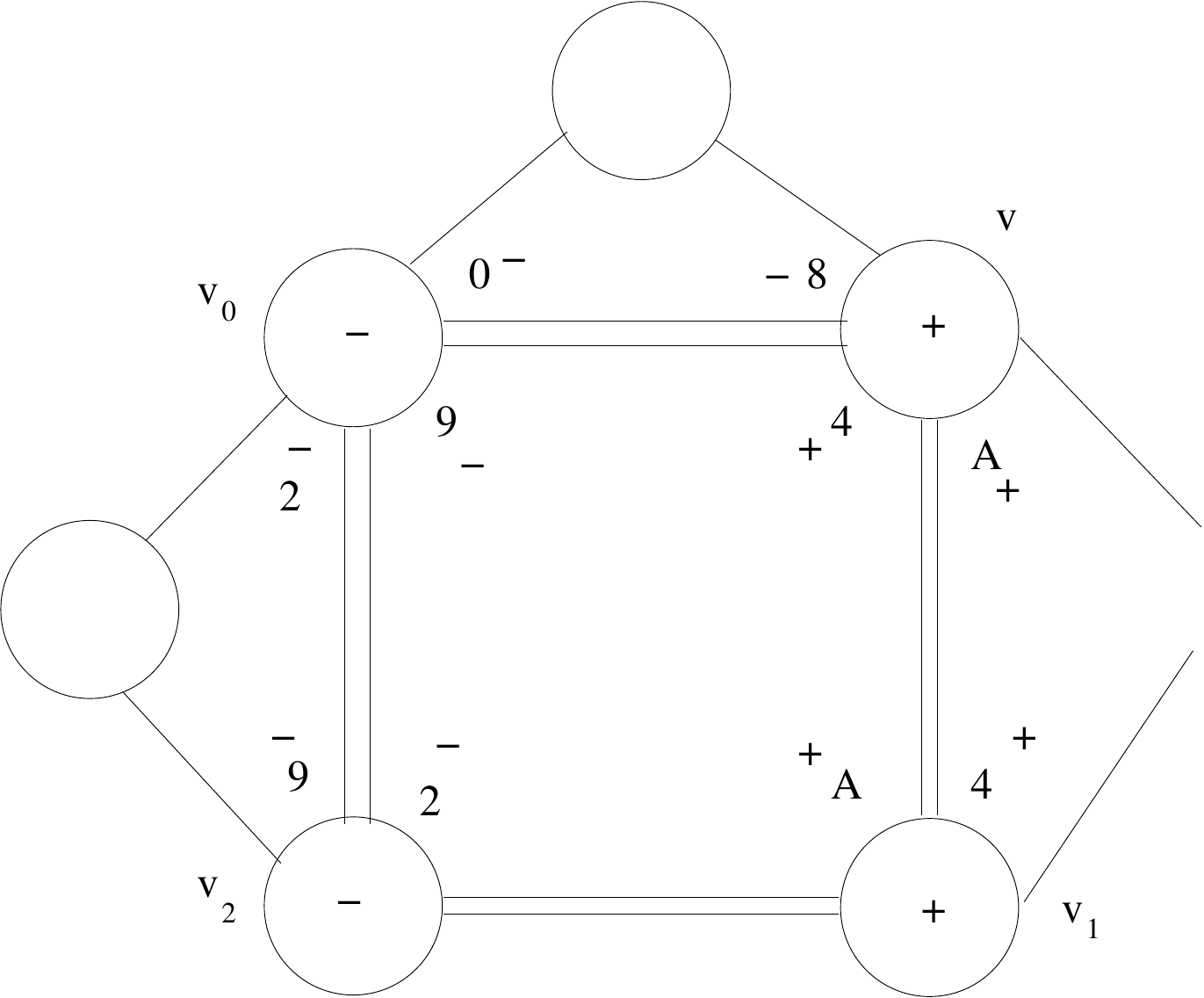}
\caption{Case 5}
\label{c5}
\end{figure}
\end{center}

Now consider the edge from $v_0$ to the fourth vertex $v_2$ of $\Delta$.  Again we may assume that this edge separates $\Delta$ from a triangular region $T$. The corners at $v_0,v_2$ in $T$ must be equal, as are the corners
at $v_0,v_2$ in $\Delta$.  This edge carries $[n..8]$ at $v_0$, and the match at $v_2$ must be maximal.  The only possible match is $\overline{[3..8]}$ with $n=3$. In particular $v_0$ has corner $2$ in $T$.  But an interior vertex of degree $5$ cannot have a triangular corner $2$, so once again we have a contradiction (Figure \ref{c5}).

\medskip\noindent{\bf Case 6. $\Delta$ has a vertex
of degree $5$ with corner $c\in\{2,3\}$ in $\Delta$.}

\medskip
As in Case 1, we assume without loss of generality that this
vertex $v$ has positive orientation.

The clockwise edge of $\Delta$ at $v$ carries $[B..(c-1)]$.  There is a unique match $\overline{[(2-c)..3]}$. Hence the neighbouring vertex $v_1$ at the other end of this edge has
positive orientation and corner $B$ or $C$ in
$\Delta$, and hence label $y^2$.   In particular 
$v_1$ has degree $>5$. Thus we may assume that the other two corners have degree $5$.

Arguing as in Case 4, we may perform a bridge move across $\Delta$ without changing the degrees of the incident vertices
or changing the rest of the picture (Figure \ref{b6}).  The resulting $4$-gon
$\Delta'$ has a corner $c+1$.  If $c=3$ then by Case 5 we have $\kappa(\Delta')\le0$.  Hence also $\kappa(\Delta)=\kappa(\Delta')\le0$.
If $c=2$ then $\Delta'$ has a vertex of degree $5$ with corner $3$ in $\Delta'$.  As we have just seen, this implies that $\kappa(\Delta')\le0$, and hence 
by the same argument again we have $\kappa(\Delta)\le0$.

\begin{center}
\begin{figure}
\includegraphics[scale=0.35]{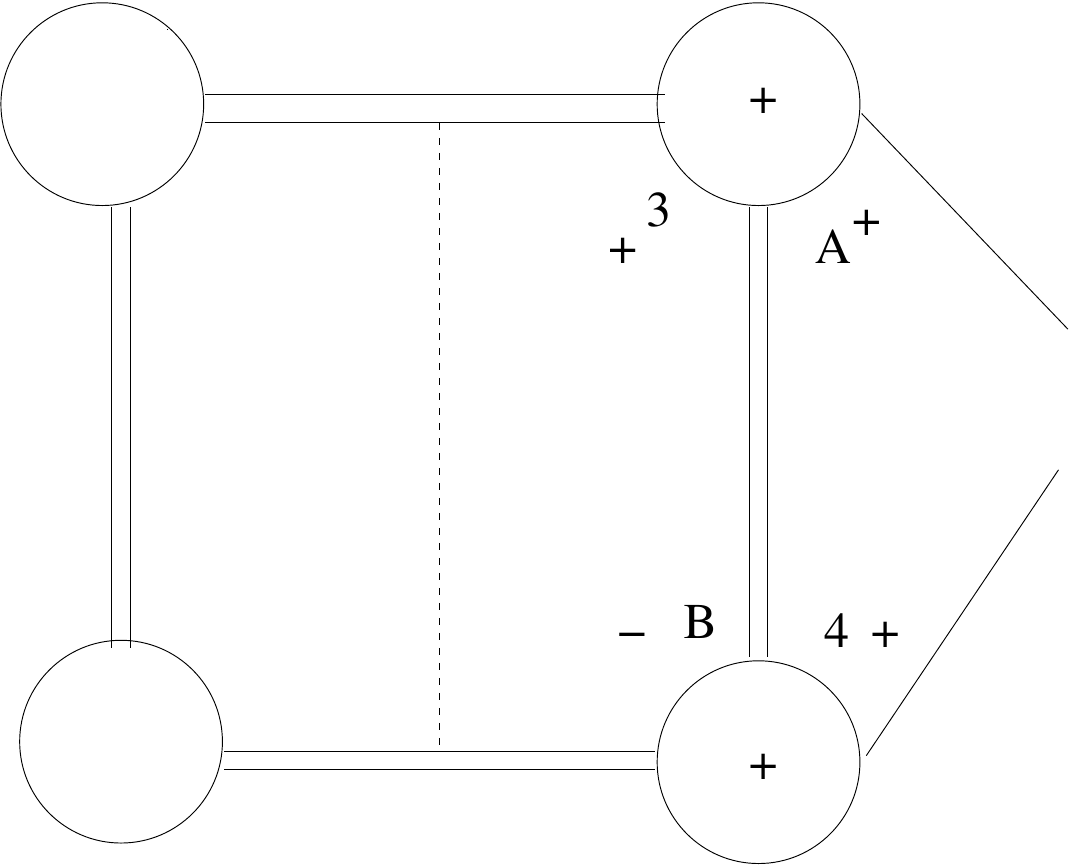}
\qquad \qquad
\includegraphics[scale=0.35]{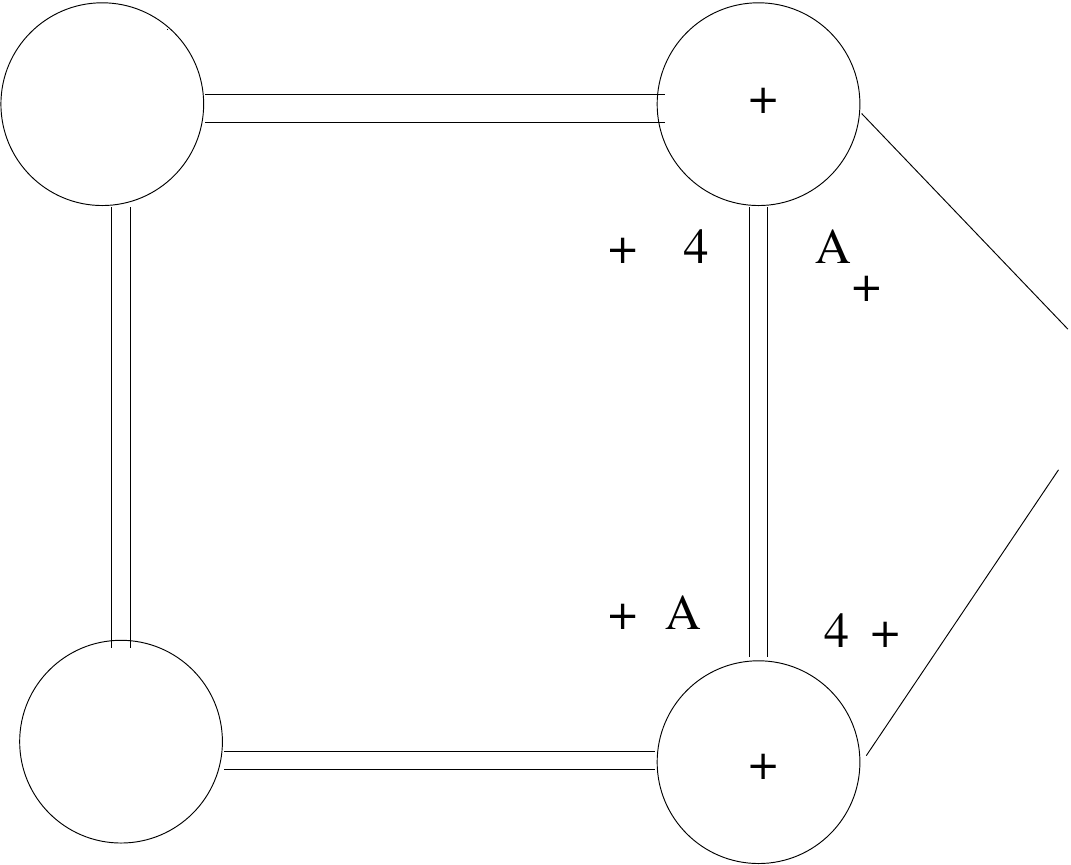}
\caption{Bridge move in Case 6}
\label{b6}
\end{figure}
\end{center}

\medskip\noindent{\bf Conclusion}

\medskip
Using the results of Cases 1-6, we may assume that $\Delta$ has three consecutive vertices
of degree $5$, each with corner $7$, $8$ or $9$ in $\Delta$.
We assume without loss of generality that the middle 
vertex $v$ of this sequence has positive orientation,
and corner $c\in\{7,8,9\}$.

Suppose first that $c=9$.  Then the clockwise edge of $\Delta$
from $v$ carries $[n..8]$ for $n\in\{3,4,5\}$.  There is a unique match $\overline{[3..(11-n)]}$ for this, so the
vertex $v_1$ at the other end of this edge has corner $2$ in $\Delta$, contrary to hypothesis.  Hence $c\in\{7,8\}$.

The anti-clockwise edge of $\Delta$ at $v$ carries $[(c+1)..n]$ with $n\in\{C,0\}$. If $n=0$ then there is a unique match $\overline{[8..(7-c)]}$. Hence the neighbouring vertex $v_0$ at the other end of this edge has
positive orientation and corner $0$, $1$ or $C$ in
$\Delta$, contrary to assumption.  So we may assume that $n=C$.  If $c=7$ then again there is a unique match
$\overline{[9..0]}$; in this case $v_0$ has corner $1$ in $\Delta$,
again giving a contradiction.  So we may assume that $c=8$.  
Possible matches for $[9..C]$ are $\overline{[9..C]}$,
$[4..7]$ and $\overline{[4..7]}$.  The corner of
$v_0$ in $\Delta$ is then $0$, $3$, or $8$
respectively.

By hypothesis, $v_0$ has corner $7$, $8$ or $9$ in $\Delta$,
so the only possible match is $\overline{[4..7]}$.  In particular $v_0$ is positively oriented, with corner  $8$ in $\Delta$ and hence label
$y^2$. The corner of $\Delta$ at $v$ also has label $y^2$, so
we may assume that the other two corners of $\Delta$ have label $y$.

As in previous cases, we may assume that each edge of $\Delta$ corresponds to two or more arcs, so that a bridge
move across $\Delta$ has the effect of replacing $\Delta$ by another $4$-gonal region $\Delta'$, without changing the degrees of the vertices of $\Delta$ (Figure \ref{b7}).  Such a bridge-move
also has the effect that the corners of $v,v_0$ in $\Delta$ 
change from $8$ to $9,7$ respectively.  Hence $\kappa(\Delta)=\kappa(\Delta')\le0$, as required.

\begin{center}
\begin{figure}
\includegraphics[scale=0.35]{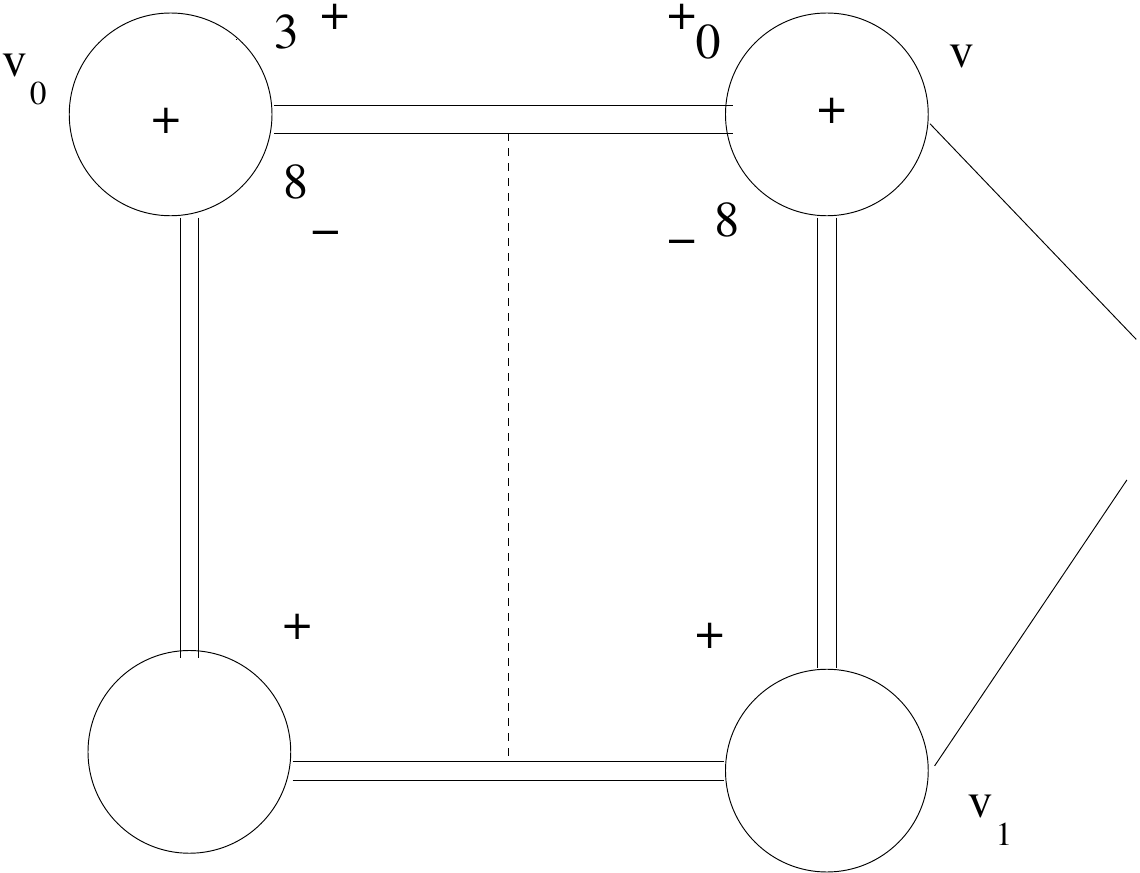}
\qquad \qquad
\includegraphics[scale=0.35]{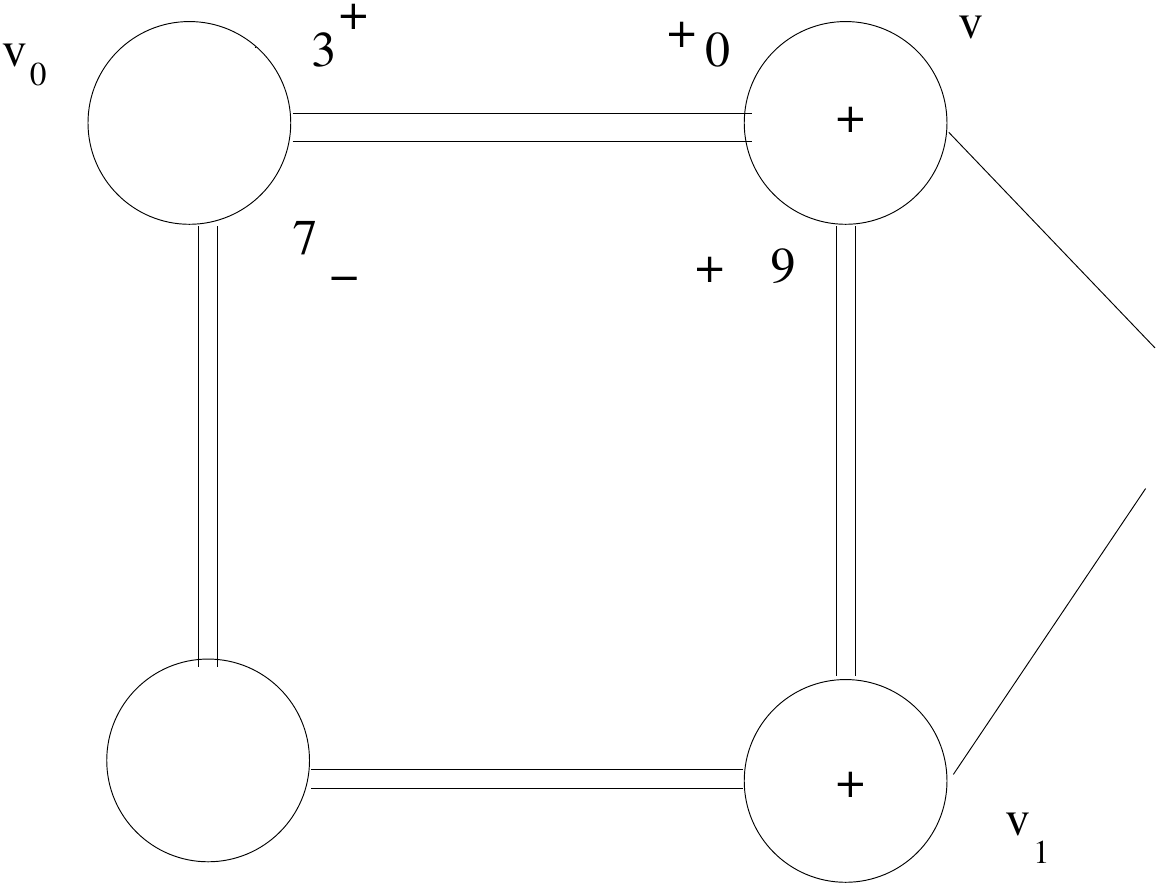}
\caption{Bridge move to complete proof}
\label{b7}
\end{figure}
\end{center}

This completes the proof.
\end{pf}

As a consequence of the above result, any positive curvature in our picture is concentrated at interior vertices of
degree $5$ with $5$ triangular corners. 

Hence there exist interior vertices with $\kappa>0$.
To complete the proof we distribute positive curvature
from these vertices to negatively curved neighbours.

Recall from Theorem \ref{posvertex} that any such interior vertex $v$ has corners
$5,A,4,7,1$, and that the pieces
$[2..4]$ and $[6..9]$ are matched at the neighbouring vertices $u_0,u_1$ by $[3..5]$ and $[1..4]$ respectively (Figure \ref{P4}).

\begin{center}
\begin{figure}
\includegraphics[scale=0.35]{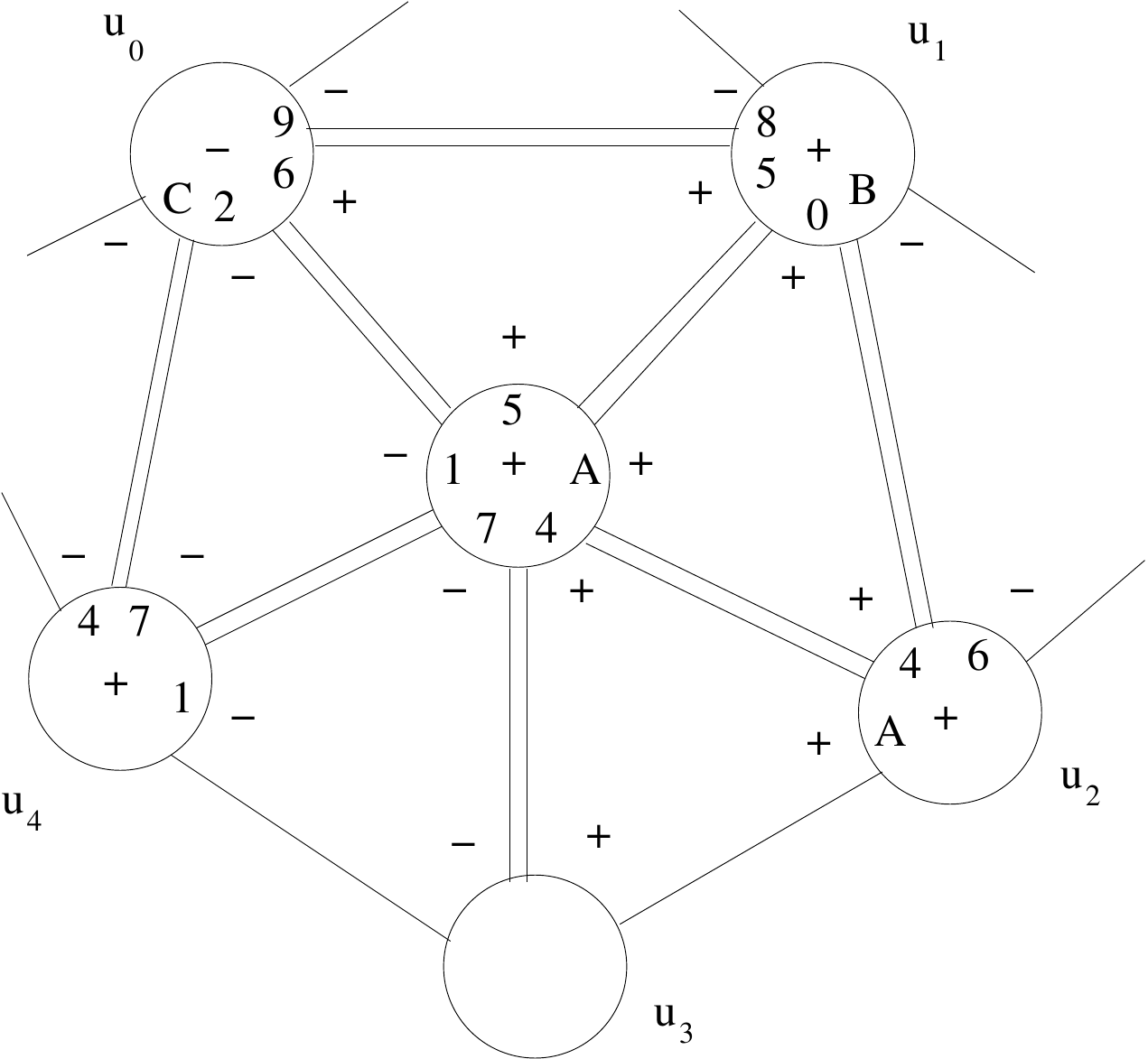}
\caption{Neighbourhood of a positively curved vertex}
\label{P4}
\end{figure}
\end{center}

We say that these two vertices $u_0,u_1$ are {\em associated}
to $v$.  Note that each of $u_0,u_1$ is connected to $v$
by an edge carrying a word that contains $[3..4]$.  In particular, no vertex is associated to more than $2$ such interior vertices with $\kappa>0$.  It also follows that no
such vertex can itself be positively curved.

We define the {\em amended} curvature function on vertices
by
\begin{itemize}
\item $\kappa'(v)=\kappa(v)+k\pi/6$ if $u$ is associated to
$k>0$ interior vertices of positive ($\kappa$-)curvature;
\item $\kappa'(v)=0$ if $v$ is an interior vertex with $\kappa(v)>0$;
\item $\kappa'(v)=\kappa(v)$ otherwise.
\end{itemize}

In other words, we distribute $\pi/6$ of positive curvature
from any positively curved vertex to each of its two associated vertices.  There is no adjustment to the curvature of regions, so the total curvature remains unchanged.
We have already seen that there are no positively curved regions, so it remains to check that there are no vertices with positive amended curvature.

\begin{lemma}\label{last}
For every vertex $u$ of $\mathcal{P}$, $\kappa'(u)\le0$.
\end{lemma}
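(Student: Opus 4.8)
The plan is to prove $\kappa'(u)\le0$ for every vertex $u$ by a case analysis on the type of $u$. Once this is done, since the redistribution defining $\kappa'$ only moves curvature between vertices we have $\sum_v\kappa'(v)=\sum_v\kappa(v)$, so together with the preceding Proposition (which gives $\kappa(\Delta)\le0$ for every region $\Delta$) we obtain $2\pi=\sum_v\kappa'(v)+\sum_\Delta\kappa(\Delta)\le0$, the contradiction that completes the proof of Theorem \ref{free13}. The routine cases are immediate: if $u$ is interior with $\kappa(u)>0$ then $\kappa'(u)=0$ by definition; and if $u$ is interior but neither positively curved nor associated to such a vertex then $\kappa'(u)=\kappa(u)$, which is nonpositive because every corner of an interior vertex of degree $\ge6$ has angle at least $\pi/3$ and, by Theorem \ref{posvertex}, a degree-$5$ interior vertex is positively curved only in the single excluded configuration.

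For a boundary vertex $u$ I would use the boundary angle conventions: any corner of $u$ in a boundary region, or meeting $\partial D^2$, receives angle $\pi/2$ or $\pi$, which can only decrease $\kappa(u)$. To bound the remaining interior corners I would apply Lemmas \ref{ind5} and \ref{no239} to an auxiliary reduced picture describing the local structure at $u$, as anticipated in the remark following Lemma \ref{ind5}; combined with the fact that the boundary word $Z_1$ contains no cyclic subword $(y(xy^2)^nxy)^{\pm1}$ for $2\le n\le N-3$, this limits the triangular corners at $u$ enough to force $\kappa(u)\le0$.

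The crux is the associated vertices, where I must prove the sharp bound $\kappa(u)\le-k\pi/6$ for a vertex associated to $k\in\{1,2\}$ positively curved vertices. By Theorem \ref{posvertex} each association attaches to $u$ an edge whose match is $[3..5]$ or $\overline{[1..4]}$; in the first case the bounding corner $6$, and in the second its mirror-image counterpart, is a corner that Lemma \ref{ind5} forbids at a degree-$5$ interior vertex, so an associated interior vertex has degree $\ge6$. Since an interior vertex of degree $d\ge6$ satisfies $\kappa=(6-d)\pi/3$, any such vertex of degree $\ge7$ already has $\kappa\le-\pi/3\le-k\pi/6$; hence it suffices to exclude the case of degree exactly $6$. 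I would do this by tracing the forbidden corner and its neighbours around $u$, reusing the unique-match and match-extension arguments of Lemmas \ref{ind5} and \ref{no239}, Theorem \ref{posvertex}, and the non-piece $[7..B]$, to force a seventh corner or reach a contradiction; a boundary vertex that is also associated is treated by combining this with the boundary argument.

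The main obstacle is exactly this exclusion of degree-$6$ associated vertices. In the preceding Proposition every corner contributed nonpositively, so a simple corner count sufficed; by contrast a degree-$6$ interior vertex is curvature-neutral, so knowing only that an associated vertex has degree $\ge6$ leaves open the possibility $\kappa'(u)=k\pi/6>0$ that would break the lemma. The delicate point is therefore to convert the forbidden corner provided by each association into a genuine local deficit, and this forces one to track corner labels and their matches with full sensitivity to the cyclic structure of $W_{13}$.
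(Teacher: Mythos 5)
You have the right skeleton---it is essentially the paper's---but the lemma's entire content sits in the two steps you defer, and the one concrete claim you do make about associated vertices is false in one of the two cases. By Theorem \ref{posvertex}, an association presents $u$ with a match $[3..5]$ or $\overline{[1..4]}$. In the first case the bounding corners at $u$ are $2$ and $6$, and corner $6$ is indeed excluded at a degree-$5$ interior vertex, so that vertex has degree at least $6$. But in the second case the bounding corners are $0$ and $5$ (as in the paper's proof: the corners of $u_1$ in the two triangles flanking the edge $vu_1$ are $0$ and $5$), and both are allowed by Lemma \ref{ind5}; in fact $\{0,1\}$ and $\{5\}$ are adjacent in that lemma's cyclic order, so a degree-$5$ vertex with consecutive corners $0,5$ joined by an edge carrying $[1..4]$ is precisely a configuration Lemma \ref{ind5} permits. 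There is no ``forbidden mirror-image counterpart'' of corner $6$ here, so your argument does not even yield degree $\ge 6$ for this associated vertex.

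More importantly, as you note yourself, degree $\ge 6$ would be useless anyway: a degree-$6$ interior vertex has $\kappa=0$, so an associated one would have $\kappa'=k\pi/6>0$ and the lemma would fail. What is needed, and what the paper proves, is that every associated interior vertex has degree at least $7$, giving $\kappa\le-\pi/3\le-2\cdot\pi/6$. That proof is the heart of the lemma and is not a one-line corner exclusion: it determines the orientations and corners of the further neighbours $u_2,u_4$ of $v$ (via the unique matches $\overline{[B..3]}$ and $\overline{[8..0]}$), bounds the number of arcs in the edges $u_1u_2$, $u_4u_0$, $u_0u_1$ (at most $2$, $3$, $3$ respectively), and then invokes the non-pieces $[A..1]$, $[2..5]$ and $[6..A]$ (not $[7..B]$) to force at least three corners at each of $u_0,u_1$ beyond the four visible in Figure \ref{P4}. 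Your proposal replaces all of this with ``trace the corners to force a seventh corner or reach a contradiction,'' which restates the goal rather than proving it. The same applies to your boundary case: the paper must list the maximal matches between subwords of $W^2$ and of $Z$, pass to the extended picture $\widehat{\mathcal{P}}$, dispose of the exceptional index-$5$ configurations via the corners $C$, $B$ and $6$, and, for a boundary vertex that is also associated, reuse the degree-$\ge 7$ bound inside $\widehat{\mathcal{P}}$ to get $\kappa\le-\pi/3$. So the proposal has a genuine gap at exactly the step that makes the lemma true.
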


\begin{pf}
Suppose first that $u$ is an interior vertex of $\mathcal{P}$.
If $\kappa(u)>0$ then by Theorem \ref{posvertex} we know that
$u$ has degree $5$ with all incident corners triangular, so
$\kappa(u)=\pi/3$.  By definition, $\kappa'(u)=0$.

So we may assume that $\kappa(u)\le0$.

If $u$ is not associated to any interior vertex with
positive $\kappa$-curvature, then by definition $\kappa'(u)=\kappa(u)\le0$.

If $u$ is associated to an interior vertex $v$ of positive curvature, then $v$ has a configuration as in Figure \ref{P4} above,
and $u$ is one of the neighbouring vertices $u_0,u_1$.

Note that the edges connecting $u$ to $u_2,u_4$
carry $[B..3]$ and $[8..0]$, which have unique matches
$\overline{[B..3]}$ and $\overline{[8..0]}$ respectively.
Hence $u_2,u_4$ are positively oriented.
Hence also the corners of the triangle $uu_1u_2$ at $u_1$ and $u_2$
are $0$ and $4$ respectively, while those of $uu_4u_0$ at $u_4$ and $u_0$ are $7$ and $2$ respectively.

The labels following corner $4$ at $u_2$, in clockwise order,
are $y,y^2,y^2,\dots$, while those following $0$ at $u_1$ in anti-clockwise order are $y^2,y^2,y,\dots$.  It follows that the edge from $u_1$ to $u_2$ has at most two arcs, so the next corner of $u_2$ anticlockwise from the corner $0$ in $uu_1u_2$ is $B$ or $C$.

By a similar analysis, we see that the edge $u_4u_0$ has at most $3$ arcs, so that the next corner of $u_4$ clockwise from the corner $2$ in $uu_4u_0$ is $C$, $0$ or $1$.

A similar analysis again shows that the edge $u_0u_1$ has at most $3$ arcs, so the next corner of $u_0$ anti-clockwise from the
$6$ corner in $uu_0u_1$ is $7$, $8$ or $9$, while the
next corner of $u_1$ clockwise from the
$5$ corner in $uu_0u_1$ is $6$, $7$ or $8$.

Finally, since $[A..1]$, $[2..5]$ and $[6..A]$ are non-pieces,
it follows that each of $u_0,u_1$ have at least three incident corners in addition to the four in Figure \ref{P4}.  In other words, each has degree at least $7$, and hence $\kappa(u)\le-\pi/3$.  Since $u$ is associated to at most two
interior vertices of positive curvature, $\kappa'(u)\le\kappa(u)+2\pi/6\le0$, as required.

\medskip
Now suppose that $u$ is a boundary vertex.  If $u$ is connected to the boundary by more than one parallel class of arcs, then it has at least four corners of angle $\pi/2$, and hence $\kappa(u)\le0$.  If in addition it is associated to a
positively curved interior vertex, then it has also at least two interior corners, with angles $\pi/3$.  Hence $\kappa(u)\le-2\pi/3$ so $\kappa'(u)\le-2\pi/3+2\pi/6<0$.

If $u$ is not connected to the boundary, then it has degree at least $5$ and at least one corner with angle $\pi$, so
$\kappa(u)\le-\pi/3$ and again $\kappa'(u)\le0$.

Assume therefore that $u$ is connected to the boundary by a single parallel class of arcs.  The word $U$ carried by that class has to be a cyclic subword of $W^2$, and must match a
cyclic subword of $Z$

The maximal such matches are
\begin{itemize}
\item $[(xy)^4(xy^{-1})^3x]^{\pm 1}$,
\item $[(xy^{-1})^3(xy)^2x]^{\pm 1}$,
\item $[xyxy^{-1}x(yx)^4]^{\pm 1}$, and 
\item $[(xy^{-1})^2xyxy^{-1}x]^{\pm 1}$.
\end{itemize}

In particular $U$ is either a piece or a product of two pieces.  We may
extend $\mathcal{P}$ to a new reduced picture $\widehat{\mathcal{P}}$,
in which $u$ is no longer attached to the boundary by arcs, by
joining either one (if $U$ is a piece) or two new vertices to $u$
where it joins $\partial\mathcal{P}$. 

If $u$ is associated to a positively curved interior vertex $v$ of
$\mathcal P$, then $v$ is also a positively curved interior vertex of
$\widehat{\mathcal P}$ to which $u$ is associated.  It follows that $u$ has degree at least $7$ in $\widehat{\mathcal P}$, and hence at least $5$ in $\mathcal{P}$.  It therefore has
at least $4$ interior corners, and two boundary corners,
so $\kappa(u)\le 2\pi -2\pi/2 - 4\pi/3=-\pi/3$, and hence $\kappa'(u)\le0$ as required.

Otherwise  $\kappa'(u)=\kappa(u)$ so it suffices to show that
$\kappa(u)\le0$.

 By Lemma \ref{ind5}, $u$ has index at least
$5$ in $\widehat{\mathcal{P}}$, so at least $4$ in $\mathcal{P}$,
except possibly in the case where $U$ is not a piece and the index of
$u$ in $\widehat{\mathcal{P}}$ is precisely $5$.  In this case $U$
is one of $[(xy)^4(xy^{-1})^kx]^{\pm 1}$ (with $0\le k\le 3$),
$[(xy^{-1})^3(xy)^2x]^{\pm 1}$, $[xyxy^{-1}x(yx)^4]^{\pm 1}$
or $[xy^{-1}x(yx)^4]^{\pm 1}$.  In the first case, $u$ would be
a vertex of index $5$ in $\widehat{\mathcal{P}}$, not joined to the boundary,
with an incident corner $C$. In the second case $u$ would have an incident corner $B$.
In the last two cases
$u$ would have an incident corner $6$.  In all cases we would have a contradiction to Lemma \ref{ind5}.
Hence $u$ has index at least $4$ in $\mathcal{P}$
in all cases.

But then $u$ has at least three corners of angle $\ge\pi/3$
together with two boundary corners of angle $\pi/2$, so
$\kappa'(u)=\kappa(u)\le0$ as claimed.
\end{pf}

This concludes the proof of Theorem \ref{free13}.

\end{document}